\newcommand\citetq[2]{\citeauthor{#2} \cite[{\frenchspacing #1}]{#2}} 
\numberwithin{equation}{section}
\newtheorem{theorem}{Theorem}[section]
\newtheorem{lemma}[theorem]{Lemma}
\newtheorem{proposition}[theorem]{Proposition}
\newtheorem{corollary}[theorem]{Corollary}
\theoremstyle{definition}
\newtheorem{example}[theorem]{Example}
\newtheorem{remark}[theorem]{Remark}
\theoremstyle{remark}
\newenvironment{exx}[1]
{\begin{example}[\textbf{#1}]}
{\end{example}}
\newenvironment{romenumerate}{\begin{enumerate}
 }{\end{enumerate}}
\newcounter{oldenumi}
{\setcounter{oldenumi}{\value{enumi}}
\begin{romenumerate} \setcounter{enumi}{\value{oldenumi}}}
{\end{romenumerate}}
\newcounter{thmenumerate}
\newenvironment{thmenumerate}
{\setcounter{thmenumerate}{0}%
 \def\item{\par
 \refstepcounter{thmenumerate}\textup{(\roman{thmenumerate})\enspace}}
}
{}
\newcounter{xenumerate}   
\newenvironment{xenumerate}
  {\begin{list}
    {\upshape(\roman{xenumerate})}
    {\setlength{\leftmargin}{0pt}
     \setlength{\rightmargin}{0pt}
     \setlength{\labelwidth}{0pt}
     \setlength{\itemindent}{\labelsep}
     \setlength{\topsep}{0pt}
     \usecounter{xenumerate}} }
  {\end{list}}
\newcommand\pfitem[1]{\par(#1):}
\newcommand\pfitemx[1]{\par\textit{#1}:}
\newcommand{\refT}[1]{Theorem~\ref{#1}}
\newcommand{\refC}[1]{Corollary~\ref{#1}}
\newcommand{\refL}[1]{Lemma~\ref{#1}}
\newcommand{\refR}[1]{Remark~\ref{#1}}
\newcommand{\refS}[1]{Section~\ref{#1}}
\newcommand{\refP}[1]{Proposition~\ref{#1}}
\newcommand{\refE}[1]{Example~\ref{#1}}
\newcommand{\refApp}[1]{Appendix~\ref{#1}}
\newcommand{\refand}[2]{\ref{#1} and~\ref{#2}}
\xdef\klockan{\the\count1.0\the\count255}
\xdef\klockan{\the\count1.\the\count255}\fi
\newcommand{\sumk}{\sum_{k=0}^\infty}
\newcommand{\summ}{\sum_{m=0}^\infty}
\newcommand{\sumn}{\sum_{n=0}^\infty}
\newcommand{\sumni}{\sum_{n=1}^\infty}
\newcommand{\sumjj}{\sum_{j=1}^J}
\newcommand{\sumkk}{\sum_{k=1}^K}
\newcommand{\prodjj}{\prod_{j=1}^J}
\newcommand{\prodkk}{\prod_{k=1}^K}
\newcommand{\prodll}{\prod_{\ell=1}^L}
\newcommand{\prodjjx}{\prod_{j=1}^{J'}}
\newcommand{\prodkkx}{\prod_{k=1}^{K'}}
\newcommand\set[1]{\ensuremath{\{#1\}}}
\newcommand\bigset[1]{\ensuremath{\bigl\{#1\bigr\}}}
\newcommand\Bigset[1]{\ensuremath{\Bigl\{#1\Bigr\}}}
\newcommand\xpar[1]{(#1)}
\newcommand\bigpar[1]{\bigl(#1\bigr)}
\newcommand\Bigpar[1]{\Bigl(#1\Bigr)}
\newcommand\lrpar[1]{\left(#1\right)}
\newcommand\lrxpar[1]{\!\left(#1\right)}
\newcommand\Bigsqpar[1]{\Bigl[#1\Bigr]}
\newcommand\bigabs[1]{\bigl|#1\bigr|}
\newcommand\lrabs[1]{\left|#1\right|}
\def\rompar(#1){\textup(#1\textup)}    
\newcommand\xfrac[2]{#1/#2}
\newcommand\Bigparfrac[2]{\Bigpar{\frac{#1}{#2}}}
\newcommand\expe[1]{e^{#1}}
\def\xexp(#1){e^{#1}}
\newcommand\ceil[1]{\lceil#1\rceil}
\newcommand{\tooo}{\to\infty}
\newcommand{\topoo}{\to+\infty}
\newcommand{\tomoo}{\to-\infty}
\newcommand{\topmoo}{\to\pm\infty}
\newcommand\xtoo{\ensuremath{{x\to\infty}}}
\newcommand\downto{\searrow}
\newcommand\upto{\nearrow}
\newcommand\half{\tfrac12}
\newcommand\iid{i.i.d.\spacefactor=1000}    
\newcommand\ie{i.e.\spacefactor=1000}
\newcommand\eg{e.g.\spacefactor=1000}
\newcommand\viz{viz.\spacefactor=1000}
\newcommand\cf{cf.\spacefactor=1000}
\newcommand\Cf{Cf.\spacefactor=1000}
\newcommand{\as}{a.s.\spacefactor=1000}
\newcommand\ii{\mathrm{i}}
\newcommand{\tend}{\longrightarrow}
\newcommand\dto{\overset{\mathrm{d}}{\tend}}
\newcommand\eqd{\overset{\mathrm{d}}{=}}
\newcommand\bbR{\mathbb R}
\newcommand\bbC{\mathbb C}
\newcommand\bbN{\mathbb N}
\newcommand\bbQ{\mathbb Q}
\newcommand\bbZ{\mathbb Z}
\newcommand\bbZleo{\mathbb Z_{\le0}}
\newcommand\bbZgeo{\mathbb Z_{\ge0}}
\newcounter{CC} 
\newcounter{cc}
\renewcommand\Re{\operatorname{Re}}
\renewcommand\Im{\operatorname{Im}}
\newcommand\E{\operatorname{\mathbb E{}}}
\renewcommand\P{\operatorname{\mathbb P{}}}
\newcommand\Exp{\operatorname{Exp}}
\newcommand\Be{\operatorname{Be}}
\newcommand\Res{\operatorname{Res}}
\newcommand\ga{\alpha}
\newcommand\gb{\beta}
\newcommand\gd{\delta}
\newcommand\gf{\varphi}
\newcommand\gam{\gamma}
\newcommand\gG{\Gamma}
\newcommand\gk{\varkappa}
\newcommand\gl{\lambda}
\newcommand\gs{\sigma}
\newcommand\eps{\varepsilon}
\newcommand\cA{\mathcal A}
\newcommand\cF{\mathcal F}
\newcommand\cM{\mathcal M}
\newcommand\cT{{\mathcal T}}
\newcommand\tF{\widetilde F}
\newcommand\tP{\widetilde P}
\newcommand\tX{\widetilde X}
\newcommand\tY{\widetilde Y}
\newcommand\ett[1]{\boldsymbol1[#1]} 
\newcommand\Bigett[1]{\boldsymbol1\Bigsqpar{#1}} 
\def\[#1]{[\![#1]\!]}
\newcommand\smatrixx[1]{\left(\begin{smallmatrix}#1\end{smallmatrix}\right)}
\newcommand\qq{^{1/2}}
\newcommand\qqw{^{-1/2}}
\newcommand\qw{^{-1}}
\newcommand\qww{^{-2}}
\newcommand\qqc{^{3/2}}
\newcommand\qqq{^{1/3}}
\newcommand\qqqb{^{2/3}}
\newcommand\qqqbw{^{-2/3}}
\newcommand\qqqq{^{1/4}}
\renewcommand{\=}{:=}
\newcommand\intoi{\int_0^1}
\newcommand\intot{\int_0^t}
\newcommand\intotau{\int_0^T}
\newcommand\intoo{\int_0^\infty}
\newcommand\intoooo{\int_{-\infty}^\infty}
\newcommand\intgs{\int_{\gs-\ii\infty}^{\gs+\ii\infty}}
\newcommand\oi{[0,1]}
\newcommand\ooo{(0,\infty)}
\newcommand\oooo{(-\infty,\infty)}
\newcommand\dd{\,\textup{d}}
\newcommand{\mgf}{moment generating function}
\newcommand{\chf}{characteristic function}
\newcommand\lhs{left hand side}
\newcommand\rhs{right hand side}
\newcommand\tpo{_{t\ge0}}
\newcommand\taux{T}
\newcommand\fa{f_{\cA}}
\newcommand{\ogt}{of Gamma type}
\newcommand{\mogt}{moments of Gamma type}
\newcommand{\cx}{a'}
\newcommand{\dx}{b'}
\newcommand{\da}{d}
\newcommand{\Oqw}[1]{O\bigpar{#1\qw}}
\newcommand{\Oqwa}[1]{\Oqw{|#1|}}
\newcommand{\ppi}{\sqrt{2\pi}}
\newcommand{\lpi}{\log\sqrt{2\pi}}
\newcommand{\abs}[1]{|#1|}
\newcommand{\absnuf}[1]{\nu_F(#1)}
\newcommand{\res}[1]{\Res_{#1}}
\newcommand{\xgG}{\Gamma} 
\newcommand{\xgGa}{\xgG_\ga} 
\newcommand{\simin}{\in}
\newcommand{\Bab}{B_{\ga,\gb}}
\newcommand{\xhalf}{_{1/2}}
\newcommand{\xdot}{\cdot}
\newcommand{\Fii}{\,{}_1F_1}
\newcommand{\FF}[2]{\,{}_{#1}F_{#2}}
\newcommand{\xn}{^{(n)}}
\newcommand{\exi}{X_I}
\newcommand{\exii}{X_{II,\ga}}
\newcommand{\exiii}{X_{III,\ga}}
\newcommand{\B}{\mathrm{B}}
\newcommand{\N}{\mathrm{N}}
\newcommand{\U}{\mathrm{U}}
\newcommand{\cfm}{\cF_{\mathsf m}}
\newcommand{\cfo}{\cF_{0}}
\newcommand{\gpu}{generalized \Polya{} urn}
\newcommand{\Levy}{L\'evy}
\newcommand{\Polya}{P\'olya}
\newcommand\urladdrx[1]{{\urladdr{\def~{{\tiny$\sim$}}#1}}}
\begin{document}
\title[Moments of Gamma type]
{Moments of Gamma type and the Brownian supremum process area}

\date{22 February, 2010} 

\author{Svante Janson}
\address{Department of Mathematics, Uppsala University, PO Box 480,
SE-751~06 Uppsala, Sweden}
\email{svante.janson@math.uu.se}
\urladdrx{http://www.math.uu.se/~svante/}

\subjclass[2000]{60E10; 60J65} 

\begin{abstract}
We study positive random variables whose moments can be expressed by
products and quotients of Gamma functions; this includes many standard
distributions. General results are given on existence, series expansion and
asymptotics of density functions. It is shown that the integral of the
supremum process of Brownian motion has moments of this type, 
as well as a related random variable occuring in the study of hashing
with linear displacement,
and the
general results are applied to these variables.
\end{abstract}

\maketitle

\section{Introduction}\label{S:intro}

We say that a positive random variable $X$ has 
\emph{moments of Gamma type} if,
for $s$ in some interval,
\begin{equation}
  \label{gamma}
\E X^s = 
C D^s
\frac{\prodjj \Gamma(a_j s+b_j)}
{\prodkk \Gamma(\cx_k s+\dx_k) } 
\end{equation}
for some integers $J,K\ge0$ and some real 
constants $C$, $D>0$, $a_j$, $b_j$, $\cx_k$, $\dx_k$.
We may and will assume that $a_j\neq 0$ and  $\cx_k\neq 0$ for all $j$
and $k$.
We often denote the \rhs{} of \eqref{gamma} by $F(s)$; this is a
meromorphic function defined for all complex $s$ (except at its poles).

Similarly we say that a real random variable $Y$ has 
\emph{\mgf\ \ogt} if,
for $s$ in some interval,  
\begin{equation}
  \label{gammamgf}
\E e^{sY} = 
C e^{\da s}
\frac{\prodjj \Gamma(a_j s+b_j)}
{\prodkk \Gamma(\cx_k s+\dx_k) } 
\end{equation}
for some integers $J,K\ge0$ and some real 
constants $C$, $\da$, $a_j\neq 0$, $b_j$, $\cx_k\neq 0$, $\dx_k$, 
and that $Y$ has
\emph{\chf\ \ogt} if,
for all real $t$,
\begin{equation}
  \label{gammachf}
\E e^{\ii tY} = 
C e^{\ii t\da}
\frac{\prodjj \Gamma(b_j+\ii a_j t)}
{\prodkk \Gamma(\dx_k +\ii \cx_k t) }
\end{equation}
for some such constants.

Of course, \eqref{gamma} and \eqref{gammamgf} are the same if $X=e^Y$
and $D=e^\da$; further, it will be shown that \eqref{gammamgf} and
\eqref{gammachf} are equivalent by analytic continuation.
Moreover, we shall see that the range of validity of \eqref{gamma},
or \eqref{gammamgf} is always the largest
possible. We summarize these simple but useful observations in
\refT{T1} below.

\begin{remark}\label{Rnonunique}
  The representations in \eqref{gamma}--\eqref{gammachf} are far from
  unique. 
Using the 
duplication formula \eqref{Adouble}, or more generally the
  multiplication formula \eqref{Am}, and
other  relations such as the functional equation $\gG(z+1)=z\gG(z)$, 
a function $F(s)$ 
of this 
form may be rewritten in many different ways.
(More or less  transparently; some equivalent versions may look quite
  different to the unaided eye.) 
See for example Theorems \refand{T2}{TM}.
\end{remark}

\begin{remark}
  \label{RC}
The constant $C$ is determined by the relation
$F(0)=\E X^0=1$, which shows that
\begin{equation}
  \label{gammaC}
C= \frac{\prodkk \Gamma(\dx_k) } 
{\prodjj \Gamma(b_j)}
\end{equation}
provided no $b_j$ or $b'_k$ is a non-positive integer. In general, $C$ can
be found by taking limits as $s\to0$.
\end{remark}


\begin{remark}   \label{RD}
The constant $D$ is just a scale factor:
$X$ satisfies \eqref{gamma} if and only if $X/D$ satisfies the same
equation with $D$ replaced by 1 (\ie, without the factor $D^s$).
Similarly, $Y$ satisfies \eqref{gammamgf} or \eqref{gammachf} if and
only if $Y-\da$ satisfies the same equation with $\da$ replaced by
0. Hence we might assume $D=1$ or $\da=0$ if convenient (but we will
not do so in general). 
\end{remark}

\begin{remark}\label{Rrational}
  If $r\in R$, then $x-r=\Gamma(x-r+1)/\Gamma(x-r)$. Hence, any
  rational function $Q(x)$ with all poles and zeros real can be
  written as a finite product $\prod_\ell \Gamma(x+c_\ell)/\Gamma(x+c'_\ell)$
  with $c_\ell,c'_\ell\in\bbR$. Consequently we may allow such a rational
  factor $Q(s)$ in \eqref{gamma} and \eqref{gammamgf},
or $Q(\ii t)$ in \eqref{gammachf}, without changing the class of distributions.
\end{remark}

\begin{remark}
  \label{Rclosure}
If $X$ has moments \ogt{} and $\ga$ is a real number, then
$X^\ga$ has moments \ogt. (Just substitute $\ga s$ for $s$ in \eqref{gamma}.)
Similarly, if $X_1$ and $X_2$ are independent and both have moments of
\ogt, then $X_1X_2$ has too. (Just use $\E (X_1X_2)^s=\E X_1^s\E X_2^s$.)
\end{remark}

Several well-known distributions have moments or \mgf{s} \ogt. We give
a number of examples in \refS{Sex}.

The main motivation for the present paper is that also several less
well-known distributions have moments \ogt. It is then straightforward
to use Mellin transform techniques to obtain expansions or asymptotics
of the density function, and it seems advantageous to do so, and to
study other properties, in general for this class of distributions.

In particular, this paper was inspired by the realization that some
recently studied
random variables have moments \ogt.
One is the integral of the supremum process of a Brownian motion,
\ie, the area under the supremum process (up to some fixed time $T$).
Let $B(t)$, $t\ge0$, be a standard Brownian motion.
Consider the supremum process
$S(t)\=\max_{0\le s\le t} B(t)$, and its integral
\begin{equation}\label{a1}
  \cA(\taux)\=\intotau S(t)\dd t .
\end{equation}
We further  let $\cA\=\cA(1)$.
For any given $T>0$, 
the usual Brownian scaling
\begin{math}
\set{B(\taux t)}\tpo\eqd\set{\taux\qq B(t)}\tpo
\end{math}
implies the corresponding scaling for the supremum process
$
\set{S(\taux t)}\tpo\eqd\set{\taux\qq S(t)}\tpo
$,
and thus
\begin{equation}\label{a2}
\cA(\taux) =  \taux \intoi S(\taux t)\dd t
\eqd \taux\qqc \cA.
\end{equation}
In particular, $\E \cA(T)^s=T^{3s/2}\E \cA^s$
and it is enough to study $\cA$.

The random area $\cA$ was studied by 
\citet{SJ202}, and using their results
we will in \refS{Sarea} 
prove the following formula, showing that $\cA$ has moments \ogt.
(The result for 
the integer moments $\E\cA^n$, $n\in\bbN$, was given in 
\cite{SJ202}.)
We give several different, but equivalent, formulas of the type
\eqref{gamma} for $\E \cA^s$, which exemplifies \refR{Rnonunique}.
The third version, with only two non-constant Gamma factors is
perhaps the simplest. The last, where all Gamma factors are of the
type $\gG(s/2+b)$ with $0<b\le1$ is of a canonical type where there
are no cancellations of poles, and it is thus easy to see the poles
and zeros, \cf{} \refR{Rcanonical}.

\begin{theorem} \label{T2}
The  moments of $\cA$ are given by,
for $\Re s >-1$,
\begin{equation*}
  \begin{split}
\E \cA^s 
&= \frac{\gG(s+1) \, \gG(s+2/3)}{\gG(2/3) \, \gG(3s/2+1)}
\xdot\Bigpar{\frac{3}{ \sqrt{8}}}^s
\\&
= \frac{\gG(s+1) \, \gG(s+5/3)}{\gG(5/3) \, \gG(3s/2+2)}
\xdot\Bigpar{\frac{3}{ \sqrt{8}}}^s
\\
&= 
\frac{2\gG(1/3) }{3\sqrt\pi}
\xdot
\frac{\gG(3s/2+3/2) }{ \gG(s+4/3)}
\xdot\Bigparfrac {\sqrt{8}}{9}^s
\\
&= \frac{\Gamma(1/3)}{2\qqq\pi}\xdot
\frac{\gG(s/2+1/2)\,\gG(s/2+5/6) }{ \gG(s/2+2/3)}
\xdot\Bigparfrac {2}{3}^{s/2}.
  \end{split}
\end{equation*}
Further, $\E \cA^s =\infty$ for real $s\le-1$.
\end{theorem}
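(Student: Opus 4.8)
The plan is to start from what is already known about $\cA$: by \citet{SJ202} the integer moments $\E\cA^n$, $n\in\bbN$, are available in explicit form (equivalently, through a recursion that can be solved in closed form), and one checks that these values coincide with $F(n)$, where $F(s)$ denotes the first of the four displayed right-hand sides, so that $F(n)=\gG(n+1)\gG(n+2/3)\bigpar{3/\sqrt8}^n/\bigpar{\gG(2/3)\gG(3n/2+1)}=n!\,\gG(n+2/3)\bigpar{3/\sqrt8}^n/\bigpar{\gG(2/3)\gG(3n/2+1)}$. The identity $\E\cA^s=F(s)$ for all $s$ in a right half-plane then follows by an analytic-continuation/uniqueness argument; \refT{T1} afterwards upgrades it to the optimal range $\Re s>-1$ and yields $\E\cA^s=\infty$ for real $s\le-1$; and the equivalence of the four formulas is pure Gamma-function bookkeeping.

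For the uniqueness step: since $\cA=\intoi S(t)\dd t\le S(1)\eqd|B(1)|$ (by the reflection principle), all moments $\E\cA^\gs$, $\gs\ge0$, are finite, $s\mapsto\E\cA^s$ is holomorphic in $\Re s>0$, and $|\E\cA^s|\le\E\cA^{\Re s}\le\E|B(1)|^{\Re s}$; in particular $\E\cA^s$ is bounded on each vertical line in the right half-plane and grows at most like a Gamma function as $\Re s\to\infty$. On the other hand $F$ is holomorphic and zero-free in $\Re s>0$, with $|F(\gs+\ii t)|\asymp|t|^{\gs/2+1/6}e^{-\pi|t|/4}$ as $|t|\to\infty$ by Stirling's formula and comparable Gamma-type behaviour along the real axis. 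Hence $H(s)\=\E\cA^s/F(s)-1$ is holomorphic in $\Re s\ge1$, is of exponential type there with type at most $\pi/4<\pi$ along vertical lines, and vanishes at every integer $n\ge1$; Carlson's theorem then gives $H\equiv0$, i.e.\ $\E\cA^s=F(s)$ throughout $\Re s>0$, in particular on the interval $(0,\infty)$.

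With the identity established on an interval, \refT{T1} shows it persists for all real $s$ exceeding the largest real pole of $F$. A quick inspection shows the pole of $\gG(s+2/3)$ at $s=-2/3$ is cancelled by the trivial zero of $1/\gG(3s/2+1)$ there, so the largest real pole of $F$ is the one of $\gG(s+1)$ at $s=-1$; hence $\E\cA^s=F(s)$ for real $s>-1$ and $\E\cA^s=\infty$ for real $s\le-1$. (The divergence can also be seen directly: $\P(\cA\le x)\ge\P(S(1)\le x)=\P(|B(1)|\le x)\sim\sqrt{2/\pi}\,x$ as $x\downto0$, whence $\E\cA^{-1}=\intoo\P(\cA<1/t)\dd t=\infty$, and $\cA^s\ge\cA^{-1}$ on $\set{\cA\le1}$ when $s\le-1$.) Finally, the remaining three formulas come from the first by the functional equation $\gG(z+1)=z\gG(z)$ — which turns the first into the second at once, since $(2/3)(3s/2+1)=s+2/3$ — the Legendre duplication formula \eqref{Adouble}, and the multiplication formula \eqref{Am} with $m=3$ applied to $\gG(3s/2+3/2)=\gG\bigpar{3(s+1)/2}$; after the cancellations the last version has all Gamma factors of the shape $\gG(s/2+b)$ with $0<b\le1$.

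The main obstacle is the uniqueness step: one must verify that the growth of $\E\cA^s$ is controlled well enough, in both the real and the imaginary directions, to legitimately apply a Carlson-type theorem to the quotient $\E\cA^s/F(s)$ — the integer moments alone do not determine $\E\cA^s$ without such an estimate — and, upstream of that, that the integer moments recorded in \cite{SJ202} really do collapse to the compact product $F(n)$. Everything else (the pole inspection, the divergence for $s\le-1$, and the passage between the four Gamma-product forms) is routine.
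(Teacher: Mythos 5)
Your proposal takes a genuinely different route from the paper. The paper does not pass through the integer moments at all: it starts from the double Laplace transform identity \eqref{t1} of \cite{SJ202}, multiplies both sides by $\gl^s$ for $-1<s<0$ and integrates, and after Fubini and the standard Beta/Gamma integrals \eqref{Agamma1}, \eqref{Abeta2} reads off the closed form for $\E\cA^{2s/3}$ directly on the interval $(-1,0)$; \refT{T1} then extends to $\Re s>-1$, and the remaining three displays follow from \eqref{As+1}, \eqref{Adouble}, \eqref{Atriple}. This is a short, self-contained computation with no uniqueness theorem and no growth estimates. Your route instead establishes the identity on the positive integers and invokes a Carlson-type theorem for the quotient $\E\cA^s/F(s)$. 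That is a legitimate strategy, and the ingredients you assemble around it are sound: $\cA\le S(1)\eqd|B(1)|$ gives domination and the divergence of $\E\cA^{-1}$; the pole at $-2/3$ cancels against the zero of $1/\gG(3s/2+1)$ so that the first genuine negative pole is $-1$; and the four Gamma forms are indeed connected by $\gG(z+1)=z\gG(z)$, duplication, and triplication exactly as you indicate.

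The gap is in the Carlson step, and it is not a cosmetic one. Carlson's theorem needs both an exponential-type bound $|H(s)|\le Ce^{\tau|s|}$ valid on the \emph{entire} half-plane and growth strictly below $e^{\pi|t|}$ on the imaginary axis; you verify the second by appealing to the Stirling asymptotic $|F(\gs+\ii t)|\asymp|t|^{\gs/2+1/6}e^{-\pi|t|/4}$ (which is \refT{TIM} with $\gam=\gam'=1/2$, $\gd=1/6$, but note that theorem is stated uniformly only for $\gs$ in a \emph{bounded} set), while the first is merely asserted. The problem is precisely the regime where $\Re s$ and $|\Im s|$ grow together: you bound $|\E\cA^s|$ by $\E\cA^{\Re s}$ and $F$ separately along the real axis and along vertical lines, but neither of those separately gives $|\E\cA^s/F(s)|\le Ce^{\tau|s|}$ in the full quadrant, and the Stirling-in-two-variables estimate needed to close this (a lower bound for $|F(\gs+\ii t)|$ comparable to $F(\gs)e^{-\pi|t|/4}$ uniformly in $\gs\ge1$) is exactly the nontrivial step you flag but do not carry out. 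This matters: the paper's own remark at the start of \refS{Sfurther} gives Stieltjes-type examples showing that matching all integer moments to a Gamma-type expression does \emph{not} by itself force the Mellin transform to agree off the integers, so without the two-dimensional growth estimate the argument is incomplete. A second (smaller) unfinished item is the claim that the integer moments of \cite{SJ202} collapse to $F(n)$; those moments are given there by a recursion, and the identification with the compact Gamma product, while believable, is also left as an exercise. The paper's direct Mellin computation from \eqref{t1} avoids both issues entirely, which is why it is the proof actually given.
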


\begin{remark}\label{Rbrown}
  Several related Brownian areas are studied in \citet{SJ201}, for
  example the integral of $|B(t)|$ or the integral of a normalized
  Brownian excursion. These areas do not have moments \ogt. 
In fact, most of the Brownian areas studied there have entire
  functions $\E X^s$ \cite[\S29]{SJ201}, 
which is impossible for moments \ogt, see \refT{T+}\ref{T+:+-}.
(For the remaining two areas in \cite{SJ201}, we have no formal proof
  that they do not have moments \ogt, but it seems very unlikely since
  the integer moments satisfy more  complicated recursion formulas
  \cite{SJ201}.)
\end{remark}

As a consequence of \refT{T2} and our general results in \refS{Sdensity},
we can express the density function of $\cA$ using
the confluent hypergeometric function $\Fii$
(denoted $M$ in \cite{AS} and $\Phi$ in \cite{Lebedev})
or the 
confluent hypergeometric function of the second kind 
$U$ \cite{AS}
(also denoted $\Psi$ \cite{Lebedev}).
(Also the proofs of the next two theorems are given in \refS{Sarea}.)

\begin{theorem}\label{Tarea}
$\cA$ has a density function $\fa(x)$ given by, for $x>0$,
\begin{equation*}
  \begin{split}
f_\cA(x)
\hskip-2em & \hskip2em
=\frac{2\qqqb\Gamma(1/3)}{3\qq\pi}
\sumn (-1)^n
\frac{\gG(n+5/6) }{n!\, \gG(n+2/3)}
\xdot\Bigparfrac {3}{2}^{n+1/2} x^{2n}
\\
&\qquad{}\qquad{}
+
\frac{2\qqqb\Gamma(1/3)}{3\qq\pi}
\sumn(-1)^n
\frac{\gG(n+7/6) }{n!\, \gG(n+4/3)}
\xdot\Bigparfrac {3}{2}^{n+5/6}
x^{2n+2/3}
\\
&=
\frac{2\qq}{\pi\qq}\Fii\lrxpar{\frac56;\frac23;-\frac32 x^2}
+ \frac{2^{-1/6}3\qqq}{\gG(5/6)}\,x^{2/3}
 \Fii\lrxpar{\frac76;\frac43;-\frac32  x^2}
\\
&=
e^{-\frac32  x^2}
\lrpar{
\frac{2\qq}{\pi\qq}\Fii\lrxpar{-\frac16;\frac23;\frac32 x^2}
+ \frac{2^{-1/6}3\qqq}{\gG(5/6)}\,x^{2/3}
\Fii\lrxpar{\frac16;\frac43;\frac32 x^2}
}
\\
&=
\frac{2^{7/6}}{\gG(2/3)}e^{-\frac32x^2} U\lrxpar{-\frac16;\frac23;\frac32 x^2}
\\
&=
\frac{2^{5/6}3\qqq}{\gG(2/3)}x^{2/3} e^{-\frac32x^2} 
U\lrxpar{\frac16;\frac43;\frac32 x^2}
.
  \end{split}
\end{equation*}
\end{theorem}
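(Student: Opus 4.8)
The plan is to recover the density $\fa$ from the moments by Mellin inversion, and then to recognize the resulting power series as confluent hypergeometric functions. Write $F(s)=\E\cA^s$; by \refT{T2}, $F$ is meromorphic, and the last of the four expressions there reads
\begin{equation*}
F(s)=\frac{\gG(1/3)}{2\qqq\pi}\cdot\frac{\gG(s/2+1/2)\,\gG(s/2+5/6)}{\gG(s/2+2/3)}\cdot\Bigparfrac{2}{3}^{s/2}.
\end{equation*}
Stirling's formula shows that $F$ decays fast enough along vertical lines, so the general density results of \refS{Sdensity} apply and give that $\cA$ has a density $\fa$, equal for $x>0$ to the inverse Mellin transform $\frac{1}{2\pi\ii}\int_{\Re s=c}F(s)x^{-s-1}\dd s$ (any $c>-1$), which in turn equals the absolutely convergent sum of the residues of $F(s)x^{-s-1}$ over all poles of $F$.

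Since $\gG(s/2+2/3)$ has no poles, the poles of $F$ are exactly the simple poles of $\gG(s/2+1/2)$ at $s=-1-2n$ and of $\gG(s/2+5/6)$ at $s=-5/3-2n$, $n\ge0$; the two families are disjoint, so no cancellations occur. At $s=-1-2n$ we have $\res{s=-1-2n}\gG(s/2+1/2)=2(-1)^n/n!$; evaluating the remaining factors there and using the reflection formula $\gG(z)\gG(1-z)=\pi/\sin(\pi z)$ to rewrite $\gG(1/3-n)/\gG(1/6-n)$ as $(1/\sqrt3)\,\gG(n+5/6)/\gG(n+2/3)$ produces the first of the two series; the poles at $s=-5/3-2n$ are treated identically and give the second. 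Here $x^{-s-1}$ contributes $x^{2n}$, respectively $x^{2n+2/3}$, and $(2/3)^{s/2}$ contributes $(3/2)^{n+1/2}$, respectively $(3/2)^{n+5/6}$. This establishes the first displayed formula.

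The remaining equalities are classical identities for $\Fii$ and $U$. Expressing $\gG(n+c)/(n!\,\gG(n+c'))$ through Pochhammer symbols shows that the first series is a constant times $\Fii(5/6;2/3;-\tfrac32x^2)$ and the second a constant times $x^{2/3}\Fii(7/6;4/3;-\tfrac32x^2)$, which gives the second formula; Kummer's transformation $\Fii(a;b;z)=e^z\Fii(b-a;b;-z)$ then turns it into the third. Finally, the combination of $\Fii(-\tfrac16;\tfrac23;\tfrac32x^2)$ and $x^{2/3}\Fii(\tfrac16;\tfrac43;\tfrac32x^2)$ appearing there is, by the standard expansion of $U$ in terms of the two Kummer solutions (with $b=2/3$, so the second solution carries the factor $z^{1-b}\propto x^{2/3}$) — once one checks that the ratio of the two coefficients is the one dictated by that expansion — a constant multiple of $U(-\tfrac16;\tfrac23;\tfrac32x^2)$; this is the fourth formula, and the relation $U(a;b;z)=z^{1-b}U(a-b+1;2-b;z)$ converts it into the fifth.

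The only real work is the bookkeeping of the multiplicative constants (in which $\gG(2/3)$ enters through $\gG(1/3)\gG(2/3)=2\pi/\sqrt3$) together with the appeal to \refS{Sdensity} that the formal residue sum genuinely represents the density — existence, absence of a singular part, and absolute convergence; this is exactly what keeps the proof short. One could instead avoid \refS{Sdensity} by checking directly that the stated $\fa$ satisfies $\int_0^\infty x^s\fa(x)\dd x=F(s)$ for $\Re s>-1$, using known Mellin transforms of $\Fii$ and $U$ against powers, together with the fact that the distribution of $\cA$ is determined by its moments (its moment sequence satisfies Carleman's criterion). I do not anticipate a serious obstacle beyond this.
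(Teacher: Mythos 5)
Your proposal is correct and follows essentially the same route as the paper: starting from the last (canonical) formula in \refT{T2}, applying Theorems \ref{TF} and \ref{XTAsum} to get the density as the sum of residues at the poles $s=-1-2n$ and $s=-5/3-2n$, and then converting to $\Fii$ and $U$ via Kummer's transformation and the standard expression of $U$ as a combination of the two Kummer solutions. One small slip: $\gG(s/2+2/3)$ certainly does have poles (at $s=-4/3-2n$); the relevant point is that, being in the denominator, those are zeros of $F$, and they are disjoint from the numerator poles, so no cancellation occurs.
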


It follows (most easily from the second formula above) that $f_\cA$
has a finite, positive limit 
$f_\cA(0+)=\sqrt{2/\pi}$ as $x\downto0$. 
More precisely, $f_\cA(x)=\sqrt{2/\pi}+O(x\qqqb)$.

As \xtoo, we obtain from \refT{T2} and our general theorems in \refS{Sdensity}
the following asymptotic result.
Note that
the two terms in the first or second formula for $f_\cA$ in
\refT{T2} are each much larger, of the order
$x^{-5/3}$ by the asymptotics of $\Fii$ in \cite[(13.5.1)]{AS}, but
they cancel each other almost completely for large $x$.

\begin{theorem}\label{Tareaoo}
\begin{equation*}
\fa(x) 
\sim \frac{3^{2/3}\gG(1/3)}{\pi}\, x^{1/3}e^{-3x^2/2}
= \frac{2\cdot 3^{1/6} }{\gG(2/3)}\, x^{1/3}e^{-3x^2/2},
\qquad{x\to\infty}. 
\end{equation*}  
\end{theorem}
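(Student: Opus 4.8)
The plan is to substitute one of the closed forms for $\E\cA^s$ from \refT{T2} into the general density‑asymptotics machinery of \refS{Sdensity} and then simplify the resulting constants. The most convenient input is the third formula,
\begin{equation*}
  \E\cA^s = C D^s\,\frac{\gG(3s/2+3/2)}{\gG(s+4/3)},\qquad
  C=\frac{2\gG(1/3)}{3\sqrt\pi},\quad D=\frac{\sqrt8}{9},
\end{equation*}
since it has just one Gamma factor upstairs and one downstairs. Here the total Gamma order is $\delta\=\tfrac32-1=\tfrac12>0$ and $F(s)\=\E\cA^s$ has no poles in $\Re s>-1$, so the general theorem applies: the density, recovered from $f_\cA(x)=\frac1{2\pi\ii}\int_{(c)}F(s)x^{-s-1}\dd s$ for $c>-1$, has a stretched‑exponential tail of shape $x^{\kappa}e^{-\mu x^{1/\delta}}=x^{\kappa}e^{-\mu x^2}$, whose precise form is governed by the real saddle point of $F(s)x^{-s}$.

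First I would insert Stirling's expansion into $\log\gG(3s/2+3/2)-\log\gG(s+4/3)$ and locate the saddle point of $g(s)\=\log F(s)-(s+1)\log x$: the equation $\tfrac12\log s=\log x-\tfrac12-A+o(1)$, with $A\=\log D+\tfrac32\log\tfrac32-\tfrac12=-\tfrac12(1+\log3)$, gives $s_*\sim 3x^2$. Evaluating $g(s_*)$ then produces the leading term $-\tfrac32x^2$ together with a subexponential contribution $-\tfrac23\log x+O(1)$, while the Gaussian width factor coming from $g''(s_*)\sim\tfrac1{6x^2}$ contributes a further factor of order $x$. Collecting powers of $x$ ($x^{1-2/3}=x^{1/3}$) and keeping careful track of the numerical constants --- the identity $\tfrac32\cdot\tfrac23=1$ makes the $D$‑dependent factors combine cleanly with $C$ --- one gets
\begin{equation*}
  f_\cA(x)\sim\frac{3^{2/3}\gG(1/3)}{\pi}\,x^{1/3}e^{-3x^2/2},
\end{equation*}
and the equality with the second displayed constant in \refT{Tareaoo} is just the reflection formula $\gG(1/3)\gG(2/3)=2\pi/\sqrt3$.

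Since the hard analytic content --- shifting the contour, controlling its tails, and legitimizing the saddle‑point evaluation --- is exactly what the results of \refS{Sdensity} provide, the remaining work is essentially the constant bookkeeping through Stirling's formula; one need only check that $\cA$ meets the hypotheses of the general theorem, which it does because $\delta>0$ and $F$ has no pole to the right of $s=-1$. As an independent check I would also read the asymptotics off the closed form in \refT{Tarea}: from $f_\cA(x)=\frac{2^{7/6}}{\gG(2/3)}e^{-3x^2/2}\,U\bigl(-\tfrac16;\tfrac23;\tfrac32x^2\bigr)$ and the standard large‑argument behaviour $U(a;b;z)\sim z^{-a}$ one obtains $U\bigl(-\tfrac16;\tfrac23;\tfrac32x^2\bigr)\sim(\tfrac32)^{1/6}x^{1/3}$, hence $f_\cA(x)\sim\frac{2^{7/6}3^{1/6}}{2^{1/6}\gG(2/3)}x^{1/3}e^{-3x^2/2}=\frac{2\cdot3^{1/6}}{\gG(2/3)}x^{1/3}e^{-3x^2/2}$, in agreement with the above. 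This second route also explains the remark preceding the theorem: one cannot get the answer by adding the two $\Fii$‑asymptotics termwise --- each is of order $x^{-5/3}$ and they cancel almost completely --- whereas $U$, like the saddle‑point contour, isolates the surviving contribution directly.
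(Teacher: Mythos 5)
Your proposal is correct and follows essentially the same route as the paper, which simply cites the general saddle‑point result (Theorem~\ref{TAinfty}) applied to the parameters $\gam=1/2$, $\gd=1/6$, $\gk=-\tfrac12\log3$, $C_1=\gG(1/3)/\sqrt\pi$ for $\cA$; your explicit Stirling bookkeeping reproduces that theorem's proof in this special case, and your $U$‑function cross‑check is the alternative route the paper acknowledges in Remark~\ref{RF20}. (One notational caution: what you call $\delta$ is the paper's $\gam=\sum|a_j|-\sum|a'_k|=1/2$, not the paper's $\gd$, which is $1/6$ here.)
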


This result was conjectured in
\cite{SJ202}, where the weaker result
$\P(\cA>x)=\exp\bigset{-3x^2/2+o(x^2)}$ 
was shown from the moment asymptotic 
\begin{equation}
  \label{amom1}
\E \cA^s
\sim
 \frac{\gG(1/3)}{\pi\qq}s^{1/6}\Bigparfrac{s}{3e}^{s/2}
,
\qquad{s\to\infty},
\end{equation}
for integer $s$
and a Tauberian theorem.
(Only integer moments were considered in \cite{SJ202}.
Note that \eqref{amom1} for arbitrary real $s\to\infty$
follows easily from \refT{T2} and Stirling's formula;
see \refT{TRE} and \eqref{amom}.)

\begin{remark}\label{RF20}
\refT{Tareaoo} also follows from any of the last two formulas in
\refT{Tarea} and the 
asymptotic formula for $U$ in \cite[(13.5.2)]{AS}. 
Indeed, this gives an asymptotic expansion with further terms, \cf{}
\refR{Rhighersaddle}; 
in this case, by \cite[(13.5.2)]{AS}, the complete asymptotic
expansion can be written 
\begin{equation*}
\fa(x) 
\sim 
\frac{2\cdot 3^{1/6} }{\gG(2/3)}\, x^{1/3}e^{-3x^2/2}
\FF20\lrxpar{\frac16,-\frac16;;-\frac23 x\qww},
\qquad \xtoo,
\end{equation*}  
where the hypergeometric series $\FF20$ is divergent and the
asymptotic expansion is interpreted in the usual way:
if we truncate the
series after any fixed number of terms, the error is of the order of
the first omitted term.
(For the general definition of the (generalized) hypergeometric series
$\FF pq$, see \eg{} \cite[Section 5.5]{HyperG}.)
\end{remark}

\refT{Tareaoo} may be compared with similar results for
several other Brownian areas in Janson and Louchard \cite{SJ203}, see also
Janson \cite{SJ201} and \refR{Rbrown}. 
In these results for other Brownian areas, the exponent of $x$ is
always an integer (0, 1 or 2), while here the exponent is $1/3$,
which is related to the power $s^{1/6}$ in 
\eqref{amom1}.

Another example with \mogt{} comes from
\citet{PeterssonI}. He studied the maximum displacement in hashing with linear
  probing, and found for dense tables, after suitable normalization,
convergence to a limit distribution given by a random variable 
$\cM$ with
  the distribution \cite[Theorem 5.1]{PeterssonI}
  \begin{equation}
	\label{np1a}
\P(\cM\le x) = 1-\psi(x\qqc), \qquad x>0,
  \end{equation}
where $\psi(s)\=\E e^ {-s\cA} $ is the Laplace transform of $\cA$.
Equivalently,
  \begin{equation}
	\label{np1b}
\P(\cM> x) = \psi(x\qqc)
=\E e^ {-x\qqc\cA}, \qquad x>0.
  \end{equation}
This type of relation preserves moments \ogt; we give a general result.

\begin{lemma}
  \label{LVZ}
Suppose that\/ $V$ and $Z$ are two positive random variables 
and $\ga>0$.
Then 
\begin{equation}\label{vz1}
  \P(V>x) = \E e^{-x^\ga Z},
\qquad x>0,
\end{equation}
if and only if
\begin{equation}\label{vz2}
  V\eqd T^{1/\ga}/Z^{1/\ga},
\end{equation}
where $T\simin\Exp(1)$ is independent of $Z$.

If \eqref{vz1} or \eqref{vz2} holds, then
\begin{equation}
  \E V^s=\Gamma(s/\ga+1)\E Z^{-s/\ga},
\qquad s>-\ga.
\end{equation}
In particular, if one of $Z$ and $V$ has moments \ogt, then so has the other.
\end{lemma}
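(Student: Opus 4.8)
The plan is to prove the equivalence of \eqref{vz1} and \eqref{vz2} by a direct calculation with conditional expectations, then read off the moment formula, and finally invoke \refR{Rclosure} together with the obvious fact that $T\in\Exp(1)$ has moments \ogt{} (since $\E T^s=\Gamma(s+1)$ for $s>-1$).

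First I would fix $Z$ and compute, for the random variable $W\=T^{1/\ga}/Z^{1/\ga}$ with $T\in\Exp(1)$ independent of $Z$, the tail $\P(W>x)$. Conditioning on $Z$, we have $\P(W>x\mid Z)=\P\bigpar{T>x^\ga Z\mid Z}=e^{-x^\ga Z}$ for $x>0$, using the standard exponential tail $\P(T>u)=e^{-u}$ and that $Z>0$. Taking expectations gives $\P(W>x)=\E e^{-x^\ga Z}$, which is exactly the right-hand side of \eqref{vz1}. Hence if $V\eqd W$, i.e.\ \eqref{vz2} holds, then \eqref{vz1} holds. Conversely, \eqref{vz1} determines the distribution function of $V$ on $(0,\infty)$ as $x\mapsto 1-\E e^{-x^\ga Z}$, and since we have just shown $W$ has this same distribution function on $(0,\infty)$, and both $V$ and $W$ are positive (so the distribution functions agree on all of $\bbR$), we get $V\eqd W$, which is \eqref{vz2}.

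Next, assuming \eqref{vz2}, the moment formula is immediate from independence: for $s>-\ga$,
\begin{equation*}
  \E V^s=\E\bigpar{T^{s/\ga}Z^{-s/\ga}}=\E T^{s/\ga}\cdot\E Z^{-s/\ga}=\Gamma(s/\ga+1)\,\E Z^{-s/\ga},
\end{equation*}
where $\E T^{s/\ga}=\Gamma(s/\ga+1)$ is finite precisely when $s/\ga>-1$, i.e.\ $s>-\ga$; when $s\le-\ga$ the factor $\Gamma(s/\ga+1)\E Z^{-s/\ga}$ is not finite (the first factor forces this, as $\E Z^{-s/\ga}>0$), consistent with $\E V^s=\infty$ there. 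For the final sentence: if $Z$ has moments \ogt, then by \refR{Rclosure} so does $Z^{-1/\ga}$, hence so does the independent product $T^{1/\ga}\cdot Z^{-1/\ga}\eqd V$ (using again that $T^{1/\ga}$ has moments \ogt{} by \refR{Rclosure} applied to $T$); and the moment formula displays $\E V^s$ as $\Gamma(s/\ga+1)$ times $\E Z^{-s/\ga}=\E (Z^{-1/\ga})^s$, so conversely, if $V$ has moments \ogt{}, then $\E (Z^{-1/\ga})^s=\E V^s/\Gamma(s/\ga+1)$ is of the form \eqref{gamma} (dividing by one more Gamma factor stays in the class), so $Z^{-1/\ga}$ and hence $Z$ has moments \ogt.

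I do not expect any serious obstacle here; the only point requiring a little care is the range of $s$ in the moment formula and making sure that "has moments \ogt" is correctly transported in both directions — in particular that multiplying or dividing by a single Gamma factor of the form $\Gamma(s/\ga+1)$ keeps one inside the class \eqref{gamma} (which it does, by inspection of the definition), and that the independence hypotheses are used correctly so that the product rule $\E(XY)^s=\E X^s\,\E Y^s$ applies.
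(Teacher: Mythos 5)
Your proof is correct and takes essentially the same approach as the paper: compute $\P(T^{1/\ga}/Z^{1/\ga}>x)$ by conditioning on $Z$ to identify it with $\E e^{-x^\ga Z}$, then read off the moment formula from independence. You spell out a few more details (the converse direction of the equivalence, and the explicit transfer of the Gamma-type property in both directions via \refR{Rclosure}) that the paper leaves implicit, but the argument is the same.
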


We postpone the simple proof until \refS{Shash}. By \eqref{np1b},
\refL{LVZ} applies to $\cM$ and $\cA$, and thus $\cM$ has moments
\ogt. More precisely, \refT{T2} implies the following, see
\refS{Shash} for details.

\begin{theorem}\label{TM}
For $-3/2 < \Re s < 3/2$,
  \begin{equation*}
	\begin{split}
  \E \cM^s 
&= \frac{\gG(1+2s/3)\,\gG(2/3-2s/3)\,\gG(1-2s/3)}{\gG(2/3)\,\gG(1-s)}
\xdot
\Bigparfrac{2}{3\qqqb}^s
\\
&= 
\frac{2\gG(1/3) }{3\sqrt\pi}
\xdot
\frac{\gG(1+2s/3)\,\gG(3/2-s) }{ \gG(4/3-2s/3)}
\xdot\Bigparfrac {3^{4/3}}{2}^s
\\
&= \frac{\Gamma(1/3)}{2\qqq\pi}\xdot
\frac{\gG(1+2s/3)\,\gG(1/2-s/3)\,\gG(5/6-s/3) }{ \gG(2/3-s/3)}
\xdot\Bigparfrac {3}{2}^{s/3}
\\
&= \frac{\Gamma(1/3)}{2\qqq\pi\qqc}\xdot
\frac{\gG(1/2+s/3)\,\gG(1+s/3)\,\gG(1/2-s/3)\,\gG(5/6-s/3) }{ \gG(2/3-s/3)}
\xdot {6}^{s/3}
.
	\end{split}
  \end{equation*}
Further, $\E \cM^s=\infty$ for real\/ $s\le -3/2$ or $s\ge 3/2$.
\end{theorem}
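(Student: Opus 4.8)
The plan is to derive everything from Theorem~\ref{T2} by means of Lemma~\ref{LVZ}. Relation \eqref{np1b} says precisely that $\cM$ and $\cA$ satisfy \eqref{vz1} with $V=\cM$, $Z=\cA$ and $\ga=3/2$; hence Lemma~\ref{LVZ} applies and provides both the representation $\cM\eqd T^{2/3}\cA^{-2/3}$, with $T\simin\Exp(1)$ independent of $\cA$, and the moment identity
\begin{equation*}
  \E\cM^s=\gG\bigpar{\tfrac{2s}{3}+1}\,\E\cA^{-2s/3},\qquad s>-\tfrac32 .
\end{equation*}
Equivalently, by independence, $\E\cM^s=\E T^{2s/3}\cdot\E\cA^{-2s/3}$ with $\E T^{2s/3}=\gG(\tfrac{2s}{3}+1)$.

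Next, fix real $s$ with $-\tfrac32<s<\tfrac32$, so that $-2s/3\in(-1,\infty)$ and Theorem~\ref{T2} gives closed forms for $\E\cA^{-2s/3}$. Substituting $u=-2s/3$ into the first, third and fourth displayed formulas of Theorem~\ref{T2}, multiplying by $\gG(\tfrac{2s}{3}+1)$, and simplifying the power factors (e.g.\ $(3/\sqrt8)^{-2s/3}=(2\cdot3^{-2/3})^{s}$, $(\sqrt8/9)^{-2s/3}=(3^{4/3}/2)^{s}$ and $(2/3)^{-s/3}=(3/2)^{s/3}$) produces, in turn, the first, second and third displayed formulas for $\E\cM^s$; the fourth formula then follows from the third by applying the duplication formula \eqref{Adouble} to $\gG(1+\tfrac{2s}{3})=\gG\bigpar{2(\tfrac{s}{3}+\tfrac12)}$. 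Each of the four expressions is meromorphic in $s$, and for real $s\in(-\tfrac32,\tfrac32)$ it equals $\E\cM^s$; since moreover $\E\cM^s$ is analytic on the strip $-\tfrac32<\Re s<\tfrac32$ (there $\E\abs{\cM^s}=\E\cM^{\Re s}=\E T^{2\Re s/3}\,\E\cA^{-2\Re s/3}<\infty$, both factors being finite for $\abs{\Re s}<\tfrac32$ by Lemma~\ref{LVZ} and Theorem~\ref{T2}), the identity extends to the whole strip by analytic continuation; alternatively one may cite Theorem~\ref{T1}.

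It remains to check that $\E\cM^s=\infty$ for real $s\le-\tfrac32$ or $s\ge\tfrac32$. Using $\cM^s\eqd T^{2s/3}\cA^{-2s/3}$ with $T$ and $\cA$ positive and independent, we have $\E\cM^s=\E T^{2s/3}\cdot\E\cA^{-2s/3}$ as an identity in $[0,\infty]$. For $s\ge\tfrac32$ we have $-2s/3\le-1$, so $\E\cA^{-2s/3}=\infty$ by the final assertion of Theorem~\ref{T2}, while $\E T^{2s/3}=\gG(\tfrac{2s}{3}+1)\in(0,\infty)$. For $s\le-\tfrac32$ we have $\E T^{2s/3}=\intoo t^{2s/3}e^{-t}\dd t=\infty$ because the exponent $2s/3\le-1$ makes the integrand non‑integrable at $0$, while $\E\cA^{-2s/3}\in(0,\infty]$. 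In both cases the product is $\infty$, as required.

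The only real work is in the second paragraph: the (routine) bookkeeping of Gamma‑arguments, constants and exponents in the substitution, plus the single application of \eqref{Adouble}. I do not expect a genuine obstacle; the one point deserving a little care is confirming that the strip of validity is exactly $-\tfrac32<\Re s<\tfrac32$ and not larger, which is the role of the divergence statements above (and of Theorem~\ref{T1}).
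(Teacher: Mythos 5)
Your proof is correct and follows essentially the same route as the paper: apply Lemma~\ref{LVZ} with $\ga=3/2$ to reduce $\E\cM^s$ to $\gG(2s/3+1)\E\cA^{-2s/3}$, then substitute into the formulas of Theorem~\ref{T2} and use \eqref{Adouble} for the last one. The paper's proof is just much terser (it doesn't spell out the bookkeeping or the divergence argument), but both your filled-in steps and the divergence check are accurate.
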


The special case $s=1$ yields  $\E \cM = 2\Gamma(1/3)/3\qqqb$, as found
by \citet{PeterssonI}. 
\citet{PeterssonI} further proved that $\E \cM^s=\infty$ for  $s\ge2$; we now
see that the sharp threshold is $s=3/2$.

Our general theorems apply again; they show that $\cM$ has a density, and they
yield a series expansion and asymptotics for the density.
(Proofs are given in \refS{Shash}.)
Again, the results can be expressed using various hypergeometric
functions and series. (Again, see \cite{HyperG} for definitions.)

\begin{theorem}
  \label{TMdensity}
$\cM$ has a continuous density function given by, for $x>0$,
\begin{equation*}
  \begin{split}
f_\cM(x)
&= \frac{3\qq\Gamma(1/3)}{2^{5/6}\pi}
\sumn (-1)^n
\frac{\gG(1+n/2)\,\gG(4/3+n/2) }{ \gG(7/6+n/2)\,{n!}}
\Bigparfrac {2}{3}^{n/2}
 x^{3n/2+1/2}
\\
&= \frac{2\qq}{\pi\qq}
x^{1/2}
\FF22\Bigpar{\frac43,1;\frac76,\frac12;\frac{x^3}6 }
- \frac58
x^2
\Fii\Bigpar{\frac{11}6;\frac53;
\frac {x^3}{6}}.
  \end{split}
\end{equation*}
\end{theorem}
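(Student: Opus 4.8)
The plan is to obtain $f_\cM$ by Mellin inversion from the moments in \refT{TM}, using the general density theorems of \refS{Sdensity}. The Mellin transform of a density $f$ on $(0,\infty)$ at $s$ is $\E X^{s-1}$, so $f_\cM$ is recovered by integrating $\E\cM^{s-1}x^{-s}$ over a vertical line in the strip $-1/2<\Re s<5/2$ (equivalently $-3/2<\Re(s-1)<3/2$); on \refT{TM} this integrand is analytic there and, by Stirling, rapidly decaying along the line, so the relevant theorem of \refS{Sdensity} shows that $\cM$ has a density which, for every $x>0$, equals the sum of the residues of $s\mapsto\E\cM^{s-1}x^{-s}$ at the poles to the left of the line. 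I would work with the fourth, ``canonical'' form of $\E\cM^s$ in \refT{TM}, in which every Gamma factor has argument $\pm s/3+b$ and there are no cancellations, so that these poles are transparent.

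First I would locate and evaluate those poles. Substituting $s-1$ for $s$, the two numerator factors $\gG(\tfrac12+\tfrac s3)$ and $\gG(1+\tfrac s3)$ become $\gG(\tfrac16+\tfrac s3)$ and $\gG(\tfrac23+\tfrac s3)$, with simple poles at $s=-\tfrac12-3n$ and $s=-2-3n$ ($n\ge0$); the other numerator factors become $\gG(\tfrac56-\tfrac s3)$, $\gG(\tfrac76-\tfrac s3)$ and the denominator becomes $\gG(1-\tfrac s3)$, which contribute no poles (nor zeros) to the left of the line, so in the left half-plane $\E\cM^{s-1}$ has only those two sequences of simple poles, which never coincide. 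Their union is $\{\,-\tfrac12-\tfrac32 n:\ n\ge0\,\}$, producing exactly the exponents $x^{3n/2+1/2}$ of the statement. For each residue I would use $\Res_{z=-m}\gG(z)=(-1)^m/m!$, so that $\gG(\ga+\tfrac s3)$ has residue $3(-1)^m/m!$ at $s=3(-\ga-m)$, and then substitute the pole into the remaining factors and into $6^{(s-1)/3}$. In the simplification the products of Gamma values at complementary arguments collapse by the reflection formula $\gG(z)\gG(1-z)=\pi/\sin\pi z$ — which also accounts for the overall alternating sign $(-1)^n$ — and the factors $1/(2m)!$, $1/(2m+1)!$ emerge; the outcome should be
\[
f_\cM(x)=\frac{3\qq\gG(1/3)}{2^{5/6}\pi}\sum_{n=0}^{\infty}(-1)^n
\frac{\gG(1+n/2)\,\gG(4/3+n/2)}{\gG(7/6+n/2)\,n!}\Bigparfrac{2}{3}^{n/2}x^{3n/2+1/2}.
\]

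For the closed form I would split the series by the parity of $n$. With $n=2m$, the duplication formula turns $(2m)!$ into $2^{2m}m!\,(1/2)_m$ and the Gamma quotients into quotients of Pochhammer symbols, exhibiting the even part as a constant times $x^{1/2}\FF22\bigpar{\tfrac43,1;\tfrac76,\tfrac12;\tfrac{x^3}{6}}$; with $n=2m+1$, using $\gG(2m+2)=2^{2m+1}\pi\qqw m!\,\gG(m+\tfrac32)$, the odd part becomes a constant times $x^2\Fii\bigpar{\tfrac{11}{6};\tfrac53;\tfrac{x^3}{6}}$. The two remaining constants are products of Gamma values at arguments in $\tfrac16\bbZ$; each reduces, via reflection and duplication together with $\gG(\tfrac13)\gG(\tfrac23)=2\pi/\sqrt3$, to a power of $2$, $3$ and $\pi$, and gives $\sqrt{2/\pi}$ and $-5/8$ respectively, as in the statement. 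The only step needing genuine care, rather than bookkeeping, is checking the hypotheses of the \refS{Sdensity} theorem: that after replacing $s$ by $s-1$ the numerator of $\E\cM^{s-1}$ contributes strictly more $\gG$-growth along vertical lines than the denominator, so that Stirling's formula forces the residue series to converge for \emph{every} $x>0$ — which is what makes the series above a genuine, continuous density on all of $(0,\infty)$ rather than merely an asymptotic expansion as $x\to0$.
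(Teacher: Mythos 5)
Your proposal is correct and follows essentially the same route as the paper: Mellin inversion of $\E\cM^s$ via the residue expansion of Theorem~\ref{XTAsum}, followed by a parity split and simplification with duplication and reflection. The only substantive difference is the choice of representation: the paper works from the third form of $\E\cM^s$ in Theorem~\ref{TM}, where all negative-axis poles come from the single factor $\gG(1+2s/3)$ and the alternating single series drops out at once, whereas you use the fourth (canonical) form, in which $\gG(1+2s/3)$ has been replaced via the duplication formula by $\gG(\tfrac12+s/3)\gG(1+s/3)$ and the poles arrive as two interleaved arithmetic progressions to be merged --- so you are effectively doing the parity split at the representation stage rather than at the series stage; the residues and the final answer are of course the same. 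One small caveat: the precise hypotheses needed are $\gam>0$ (to guarantee, via Theorem~\ref{TF}, that the inversion integral converges and $\cM$ has a smooth density) and $\gam'>0$ (for Theorem~\ref{XTAsum}\ref{XTAsum+}, so that the contour can be pushed to $-\infty$ and the residue series converges for \emph{every} $x>0$); your phrase about ``$\gG$-growth along vertical lines'' really describes $\gam$, which controls decay of $|F(\sigma+it)|$ as $|t|\to\infty$, while the convergence of the residue series is governed by $\gam'$, which controls $F$ along the negative real axis. Since $\gam=1$ and $\gam'=1/3$ are both positive here, the conclusion stands.
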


In particular, for small $x$ we have the asymptotic formula
\begin{equation}
f_\cM(x)= \frac{2\qq}{\pi\qq}x^{1/2}+O(x^2),
\qquad x\downto0.  
\end{equation}

For large $x$, there is a similar formula, which is the beginning of a
divergent asymptotic expansion (interpreted as in \refR{RF20}):

\begin{theorem}
  \label{TMoo}
As $\xtoo$,
\begin{equation}\label{laban}
  f_\cM(x)=\frac{3}{\sqrt{2\pi}}x^{-5/2} + O(x^{-7/2}).
\end{equation}
More precisely, $f_\cM(x)$ has as $\xtoo$ an 
asymptotic expansion
\begin{equation}\label{spoke}
  \begin{split}
f_\cM(x)
&\sim
\frac{3}{\sqrt{2\pi}} 
x^{-5/2}\FF31 \Bigpar{\frac32,\frac56,1;\frac23 ;-\frac6{x^3}}	
\\&
\qquad{}
+
\frac{5\,\gG(1/3)}{2\qq3\qqqb\pi\qq}
 x^{-7/2}\FF20 \Bigpar{\frac{11}6,\frac76; ;-\frac6{x^3}}.
  \end{split}
\end{equation}
\end{theorem}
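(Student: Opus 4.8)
The plan is to extract the large-$x$ behaviour of $f_\cM$ from its Mellin transform. By \refT{TM} the moments $\E\cM^s$ are finite precisely for $\Re s<3/2$, so $\cM$ has a heavy polynomial tail and, unlike $\cA$ whose tail is of Gaussian type, there is no saddle point: the whole asymptotic expansion of $f_\cM(x)$ as $\xtoo$ comes from the poles of the Mellin transform in a right half-plane. This is exactly the situation handled by the general density theorems of \refS{Sdensity}, so the proof reduces to choosing the inversion contour and running a residue calculation against the explicit formula of \refT{TM}.

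The Mellin transform of $f_\cM$ at $s$ is $\int_0^\infty f_\cM(x)x^{s-1}\dd x=\E\cM^{s-1}$. Using the last representation in \refT{TM},
\begin{equation*}
\E\cM^s=\frac{\gG(1/3)}{2\qqq\pi\qqc}\xdot\frac{\gG(1/2+s/3)\,\gG(1+s/3)\,\gG(1/2-s/3)\,\gG(5/6-s/3)}{\gG(2/3-s/3)}\xdot 6^{s/3},
\end{equation*}
which has three more Gamma factors in the numerator than in the denominator, each with $|a_j|=1/3$, Stirling's formula gives decay of order $e^{-\pi|\Im s|/2}$ along vertical lines; hence the inversion formula $f_\cM(x)=\frac1{2\pi\ii}\int_{c-\ii\infty}^{c+\ii\infty}\E\cM^{s-1}x^{-s}\dd s$ holds for any $c\in(-1/2,5/2)$, and the contour may be shifted rightward, the horizontal pieces vanishing by the exponential decay. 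Replacing $s$ by $s-1$, the two ``decreasing'' factors $\gG(1/2-s/3),\gG(5/6-s/3)$ become $\gG(5/6-s/3),\gG(7/6-s/3)$, with simple poles at $s=5/2+3n$ and at $s=7/2+3n$ $(n\ge0)$; these progressions are disjoint (since $7/2-5/2=1$ is not a multiple of $3$), the denominator Gamma contributes only zeros, and the remaining numerator poles lie in $\Re s<0$. Shifting the contour past the first few poles leaves an integral on $\Re s=c'$ of size $O(x^{-c'})$, so
\begin{equation*}
f_\cM(x)\sim-\sum_{\Re s_0>c}\res{s=s_0}\Bigpar{\E\cM^{s-1}x^{-s}}
\end{equation*}
is an asymptotic expansion in the sense of \refR{RF20}, and its terms $x^{-5/2-3n}$ and $x^{-7/2-3n}$ form the two families in \eqref{spoke}.

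It remains to compute the residues. At $s=5/2+3n$ one substitutes $s-1=3/2+3n$: $\res{s=5/2+3n}\gG(5/6-s/3)=-3(-1)^n/n!$, while the surviving factors evaluate to $\gG(1/6+s/3)=n!$, $\gG(2/3+s/3)=\gG(3/2+n)$, $\gG(7/6-s/3)=\gG(1/3-n)$, $\gG(1-s/3)=\gG(1/6-n)$ and $6^{(s-1)/3}=6^{1/2+n}$. Applying the reflection formula $\gG(z)\gG(1-z)=\pi/\sin\pi z$ to $\gG(1/3-n)$ and $\gG(1/6-n)$ converts the $n$-dependence into Pochhammer symbols, $\gG(1/3-n)\propto(-1)^n/(2/3)_n$ and $1/\gG(1/6-n)\propto(-1)^n(5/6)_n$; the $n=0$ term is exactly $(3/\sqrt{2\pi})x^{-5/2}$, which proves \eqref{laban}, and reading off the ratio of consecutive terms (a factor $-6x^{-3}$ times $(3/2)_n(5/6)_n/(2/3)_n$) identifies the first family as $(3/\sqrt{2\pi})x^{-5/2}\FF31(3/2,5/6,1;2/3;-6/x^3)$. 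The progression $s=7/2+3n$ is handled the same way, the pole now coming from $\gG(5/6-s/3)$, and produces the $\FF20$ series with constant $5\gG(1/3)/(2\qq3\qqqb\pi\qq)$; the duplication formula is used to put both constants in the stated closed form.

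The \textbf{main difficulty} is the bookkeeping just indicated: evaluating the residual Gamma factors at the poles, invoking reflection and duplication to reassemble the Pochhammer symbols and recognise the precise $\FF31$ and $\FF20$, and fixing the two constants; one must also check, both immediate from the above representation, that all these poles are simple and that the contour avoids them. As an independent (but more cumbersome) check, one can instead insert the known large-argument asymptotics of $\FF22$ and $\Fii$ into the density formula of \refT{TMdensity}: the two dominant $e^{x^3/6}$ contributions must cancel — as they must, since $f_\cM\ge0$ decays — and the surviving algebraic parts should reproduce \eqref{spoke}.
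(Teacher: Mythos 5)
Your proposal is correct and follows essentially the same route as the paper: apply the Mellin--inversion/residue machinery of \refT{TAfinite} (stated there for $F(s)=\E X^s$ against $x^{-s-1}$, which is your $\E\cM^{s-1}$ against $x^{-s}$ after the shift $s\mapsto s+1$) to the last representation in \refT{TM}, locate the simple poles at $s=3/2+3n$ and $s=5/2+3n$ (your $5/2+3n$, $7/2+3n$ after shifting), compute the residues via \eqref{Ares} and the reflection formula, and reassemble the two Pochhammer families into $\FF31$ and $\FF20$. One small slip: for the second progression the pole comes from the factor $\gG(7/6-s/3)$ in your shifted notation, not $\gG(5/6-s/3)$, but this does not affect the argument.
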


Yet another recent example of \mogt{} comes from the study of
generalized \Polya{} urns \cite{F:exact}, \cite{SJ169}; see \refS{Surn}.

We give some basic reults in \refS{Sbasic}, and further results on
poles and zeros in \refS{Spoles}. Many examples with standard
distributions are given in \refS{Sex}.
Asymptotics of the moments are studied in \refS{Sasymp}, and
asymptotics and series expansions of the density function are given in
\refS{Sdensity}. As said above, we give proofs of the results above
for $\cA$ and $\cM$ in Sections \refand{Sarea}{Shash}, and we give some
results for \gpu{s} in \refS{Surn}.
We end with a couple of more technical examples (counter examples) in
\refS{Sex2} and some further remarks in \refS{Sfurther}.
Some standard formulas for the Gamma function are for convenience
collected
in \refApp{Appa}.

\section{The basic theorem and some notation}\label{Sbasic}

Let $F(s)$ denote the \rhs\ of \eqref{gamma} or
\eqref{gammamgf}. (Thus, the \rhs{} of \eqref{gammachf} is $F(\ii t)$.)
Evidently, $F$ is a meromorphic function in the
complex plane, and all poles are on the real axis. Let $\rho_+$ and
$\rho_-$ be the poles closest to 0:
\begin{equation}\label{rho+-}
  \begin{aligned}
	\rho_+&\=\min\,\set{x>0:x \text{ is a pole of }F} ,
\\
	\rho_-&\=\max\set{x<0:x \text{ is a pole of }F},
  \end{aligned}
\end{equation}
with the interpretation that $\rho_+=\infty$ [$\rho_-=-\infty$] if
there is no pole on $(0,\infty)$ [$(-\infty,0)$].
Thus $-\infty \le \rho_-<0<\rho_+\le\infty$.
Note that we ignore any pole at 0 in the definitions \eqref{rho+-};
however, it follows from \refT{T1} that such a pole cannot exist; $F(s)$
is always analytic at $s=0$.

\begin{theorem}\label{T1}
  Let $X>0$ and $Y$ be random variables connected by $X=e^Y$ and thus
 $Y=\log X$, and let $C$, $D>0$, $\da=\log D$, $a_j\neq0$, $b_j$, $\cx_k\neq0$,
  $\dx_k$ be real constants,
for $j=1,\dots, J\ge0$ and $k=1,\dots,K\ge0$.
Let $F(s)$ be the meromorphic function in \eqref{gamma} and  \eqref{gammamgf}
and let
$\rho_+\in(0,\infty)$ and $\rho_-\in(-\infty,0)$ be defined by
  \eqref{rho+-}. Then the following are equivalent:
  \begin{romenumerate}
\item\label{T1mom0}
\eqref{gamma} holds for all real $s$ in some non-empty interval.	
\item\label{T1mom}
\eqref{gamma} holds for all complex $s$ in the strip $\rho_-<\Re s<\rho_+$.
\item\label{T1mgf0}
\eqref{gammamgf} holds for all real $s$ in some non-empty interval.	
\item\label{T1mgf}
\eqref{gammamgf} holds for all complex $s$ in the strip $\rho_-<\Re s<\rho_+$.
\item\label{T1chf0}
\eqref{gammachf} holds for all real $t\neq0$ in some interval $|t|<t_0$	
with $t_0>0$.
\item\label{T1chf}
\eqref{gammachf} holds for all real $t$.
  \end{romenumerate}
In this case, further $\E X^s=\E e^{sY}=\infty$ if $s\le\rho_-$ or
$s\ge\rho_+$; thus 
\begin{equation*}
\set{s\in\bbR:\E X^s<\infty}=\set{s\in\bbR:\E e^{sY}<\infty}
=(\rho_-,\rho_+).  
\end{equation*}
Equivalently,
\begin{align*}
  \rho_+&=\sup\set{s\ge0:\E X^s<\infty},\\
  \rho_-&=\inf\set{s\le0:\E X^s<\infty}.
\end{align*}
Furthermore, $F(s)=\E X^s=\E e^{sY}\neq0$ when $\rho_-<\Re s<\rho_+$.
\end{theorem}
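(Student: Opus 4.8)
The plan is to set up a cycle of implications. The equivalences (i)$\Leftrightarrow$(ii), (iii)$\Leftrightarrow$(iv), (v)$\Leftrightarrow$(vi) are each an ``analytic continuation forces maximal domain'' statement, so I would prove a single lemma covering all three and then link the three pairs together. First, recall that for a positive random variable $X$, the set $\{s\in\bbR:\E X^s<\infty\}$ is an interval (by convexity of $s\mapsto\E X^s$, i.e.\ \Holder/log-convexity), and on the interior of its closure the function $s\mapsto\E X^s$ extends to an analytic function on the vertical strip $\{\rho_-'<\Re s<\rho_+'\}$ where $(\rho_-',\rho_+')$ is that interval; this is the standard Mellin-transform analyticity fact. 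Call this analytic function $M(s)=\E X^s$ on the strip. The hypothesis (i) says $M(s)=F(s)$ on a nonempty real interval; since both sides are analytic on a common strip and agree on a set with a limit point, they agree on the whole connected strip where both are defined, i.e.\ on $\{\rho_-'<\Re s<\rho_+'\}$ intersected with the strip where $F$ is analytic.

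The crux is then to show $(\rho_-',\rho_+')=(\rho_-,\rho_+)$, i.e.\ that the real interval of finiteness of $\E X^s$ is exactly the pole-free strip of $F$ around $0$. One inclusion is immediate: wherever $M$ is finite it equals the meromorphic $F$, so $F$ can have no pole there, giving $(\rho_-',\rho_+')\subseteq(\rho_-,\rho_+)$; in particular $F$ has no pole at $0$, as remarked before the theorem. For the reverse, I would argue at the right endpoint (the left is symmetric, replacing $X$ by $1/X$). Suppose $\rho_+'<\rho_+$, so $F$ is analytic on a neighbourhood of the closed interval $[0,\rho_+']$ and in particular finite and bounded on $[0,\rho_+'+\eps]$ for small $\eps>0$. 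The point $\rho_+'$ is a finite endpoint of the finiteness interval, so $\E X^s\to\infty$ as $s\uparrow\rho_+'$ along the reals --- but on $[0,\rho_+')$ we have $\E X^s=F(s)$, which stays bounded near $\rho_+'$, a contradiction. Hence $\rho_+'=\rho_+$, and similarly $\rho_-'=\rho_-$; this simultaneously proves (ii), the stated description of $\{s\in\bbR:\E X^s<\infty\}$, and the ``$\E X^s=\infty$ for $s\le\rho_-$ or $s\ge\rho_+$'' claim, since outside $(\rho_-',\rho_+')$ the expectation is $+\infty$ by definition of the interval. The implication (ii)$\Rightarrow$(i) is trivial. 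The same argument verbatim, with $X^s$ replaced by $e^{sY}$ and using $X=e^Y$ so that $X^s=e^{sY}$ literally, gives (iii)$\Leftrightarrow$(iv); and since $D^s=e^{\da s}$, equations \eqref{gamma} and \eqref{gammamgf} are the identical statement, so (i)$\Leftrightarrow$(iii) and (ii)$\Leftrightarrow$(iv) come for free.

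It remains to fold in the characteristic function, (v) and (vi). Here (vi)$\Rightarrow$(iv): the right side of \eqref{gammachf} is $F(\ii t)$, and $F$ is analytic on the strip $\{\rho_-<\Re s<\rho_+\}$ which contains the imaginary axis; if $\E e^{\ii tY}=F(\ii t)$ for all real $t$ then in particular $\E e^{\ii tY}$ extends analytically across $t=0$, which by a standard theorem on characteristic functions (analyticity of the \chf{} in a horizontal strip is equivalent to the \mgf{} existing in the corresponding vertical strip --- Marcinkiewicz/Lukacs) forces $\E e^{sY}<\infty$ for $s$ in a neighbourhood of $0$ and $\E e^{sY}=F(s)$ there by uniqueness of analytic continuation, which is (iii); we then already have (iv). The converse (iv)$\Rightarrow$(vi): if \eqref{gammamgf} holds on the open strip, then $\E e^{sY}$ and $F(s)$ are two analytic functions on that strip agreeing there, and the strip contains the imaginary axis in its interior (as $\rho_-<0<\rho_+$), so evaluating at $s=\ii t$ gives \eqref{gammachf} for all real $t$. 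Finally (vi)$\Rightarrow$(v) is trivial, and (v)$\Rightarrow$(vi) follows because (v) gives agreement of the \chf{} with $F(\ii t)$ on a real neighbourhood of $0$, hence --- again via the analyticity-in-a-strip theorem --- analytic extension, forcing (iv), hence (vi).

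The last sentence, $F(s)=\E X^s\neq0$ on the open strip, follows immediately: $\E X^s$ for $0<\Re s<\rho_+$ (or the relevant range) is never zero because for real $\sigma$ in $(\rho_-,\rho_+)$, $\E X^\sigma>0$ as the expectation of a strictly positive random variable, and more generally $|\E X^s|=|\E X^{\Re s}X^{\ii\Im s}|\le\E X^{\Re s}$ with equality impossible to help --- more directly, one uses that $\E X^s\neq0$ on the strip because $\E X^\sigma>0$ on the real axis and $F$ is zero-free there (no zeros of $F$ on $(\rho_-,\rho_+)$ since $F>0$ there) together with the identity theorem ruling out isolated zeros forced by... actually the cleanest route: since $\E X^s=F(s)$ is analytic and nonvanishing on the real segment $(\rho_-,\rho_+)$, and $1/F$ is then also meromorphic with no pole on that segment, hence analytic on the whole strip (its poles would be zeros of $F$, which by the strict positivity $\E X^{\Re s}$ dominating... ) --- the honest statement is that $\E X^s = \E X^{\Re s}\cdot \E_{\text{tilted}}[X^{\ii \Im s}]$ and the tilted characteristic function could vanish, so I would instead deduce zero-freeness from the explicit product form: $F(s)=CD^s\prod_j\Gamma(a_js+b_j)/\prod_k\Gamma(c'_ks+d'_k)$, and $\Gamma$ has no zeros anywhere, while its poles are cancelled or excluded on $\rho_-<\Re s<\rho_+$ by definition, so $F(s)$ is a nonzero finite value there. \textbf{The main obstacle} I anticipate is being careful about the boundary behaviour argument --- specifically justifying rigorously that $\E X^s\to\infty$ as $s$ approaches a \emph{finite} endpoint of the finiteness interval from inside (monotone convergence handles this: $X^s\uparrow X^{\rho_+'}$ or the integrand blows up, giving $\E X^s\uparrow\E X^{\rho_+'}=\infty$), and invoking the correct version of the theorem linking analyticity of the \chf{} in a strip to finiteness of the \mgf.
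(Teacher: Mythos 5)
Your approach differs from the paper's: you argue directly via the Mellin transform/\mgf{} on the real axis (identifying the finiteness interval $(\rho_-',\rho_+')$ with $(\rho_-,\rho_+)$), whereas the paper routes all the nontrivial implications through the characteristic function on the imaginary axis, using Marcinkiewicz's theorem once, in the middle step (v)$\Rightarrow$(iv). Both strategies are legitimate, and they buy roughly the same thing; but your version, as written, has a genuine gap at the step you flagged as the ``main obstacle.''

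The gap is the claim that ``$\E X^s\to\infty$ as $s\uparrow\rho_+'$'' because $\rho_+'$ is a finite endpoint of the finiteness interval. That is simply false in general: the finiteness set $\{s:\E X^s<\infty\}$ can be \emph{closed} on the right, i.e.\ $\E X^{\rho_+'}<\infty$ while $\E X^s=\infty$ for $s>\rho_+'$ (take a density $\sim x^{-\rho_+'-1}(\log x)^{-2}$ near $+\infty$). In that case monotone convergence gives $\E X^s\to\E X^{\rho_+'}<\infty$, and your contradiction with boundedness of $F$ evaporates. What you actually need is a Landau/Pringsheim-type singularity theorem: a (two-sided) Laplace transform of a nonnegative measure has a genuine singularity at the real boundary point of its strip of finiteness, even if it is finite there. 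That is a theorem, not a consequence of monotone convergence. The paper obtains the analogous conclusion from Marcinkiewicz's theorem applied to the \chf{}; you cite Marcinkiewicz/Lukacs in passing, but you only use it in the weakened form ``analyticity near $0$ forces finiteness near $0$'', and then loop back to your (iii)$\Rightarrow$(iv) argument, which is precisely the step with the hole. If you instead apply Marcinkiewicz (or Landau) in its full strength --- analyticity of $F(\ii t)$ across the entire horizontal strip $-\rho_+<\Im z<-\rho_-$ forces $\E e^{sY}<\infty$ for all $s\in(\rho_-,\rho_+)$ --- the gap closes, and this is essentially the paper's (v)$\Rightarrow$(iv).

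A minor secondary issue: your argument for $F\neq0$ on the strip is off. You say the poles of the denominator $\Gamma$'s are ``cancelled or excluded on $\rho_-<\Re s<\rho_+$ by definition'', but the definition \eqref{rho+-} of $\rho_\pm$ concerns poles of $F$, not zeros, and a pole of $\Gamma(a_k's+b_k')$ is a \emph{zero} of $F$. Zero-freeness on the strip is a conclusion, not built into the definition. The clean route (which the paper uses) is: any zero of $F$ comes from a pole of a denominator Gamma, hence occurs at a real $s$; but for real $s\in(\rho_-,\rho_+)$ one has $F(s)=\E X^s>0$.
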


\begin{proof}
\ref{T1mom0}$\iff$\ref{T1mgf0} and \ref{T1mom}$\iff$\ref{T1mgf} are
trivial, since $\E X^s=\E e^{sY}$.
Further, trivially 
\ref{T1mgf}$\implies$\ref{T1mgf0},
\ref{T1chf}$\implies$\ref{T1chf0}
and
\ref{T1mgf}$\implies$\ref{T1chf}.
Hence, to show the equivalences it suffices to show that 
\ref{T1mgf0}$\implies$\ref{T1chf0}
and
\ref{T1chf0}$\implies$\ref{T1mgf}.

\ref{T1chf0}$\implies$\ref{T1mgf}.
Note first that $\gf(t)\=\E e^{\ii tY}\to1$ as $t\to0$. Hence, $F(\ii
t)\to1$ as $t\downto0$, and thus $F$ does not have a pole at
0.

This shows that $F(z)$ is analytic in the strip $\rho_-<\Re z<\rho_+$,
and thus $F(\ii z)$ is analytic in the strip
$-\rho_+<\Im z<-\rho_-$. 
By continuity, $\gf(t)=F(\ii t)$ also for $t=0$ and thus for the
entire interval $(-t_0,t_0)$.
Hence, on this interval at least, $\gf(t)$ equals the boundary values
of the function $F(\ii t)$ which is analytic for $0\le\Im z<-\rho_-$
and by a theorem of \citet{Marcinkiewicz},
$\E e^{-r Y}<\infty$ for every $r\in(0,-\rho_-)$; equivalently, 
$\E e^{r Y}<\infty$ if $\rho_-<r<0$.
By considering $-Y$, we find similarly that 
$\E e^{r Y}<\infty$ if $0<r<\rho_+$.
Consequently,
\begin{equation}\label{jesper}
 \E e^{r Y}<\infty \qquad \text{if $\rho_-<r<\rho_+$}.
\end{equation}
It is well-known that \eqref{jesper} implies that $\psi(z)\=\E e^{zY}$
is defined and finite for $\rho_-<\Re z<\rho_+$ 
and that $\psi(z)$ is an analytic function of $z$ in this strip.
Since
$\psi(\ii t)=\gf(t)=F(\ii t)$ for $|t|<t_0$, analytic continuation yields
$\psi(z)=F(z)$ in this strip, \ie{} \ref{T1mgf} holds.

\ref{T1mgf0}$\implies$\ref{T1chf0}.
Suppose that $\E e^{sY}=F(s)$ for $s\in(a,b)$, with
$-\infty<a<b<\infty$. 
Let $s_0\in(a,b)$ with
$s_0\neq0$ and suppose that $s_0>0$. (The case $s_0<0$ is similar, or
follows by considering $-Y$.)
Thus $\E e^{s_0Y}=F(s_0)<\infty$, and it follows that 
$z\mapsto \E e^{zY}$ is defined and analytic for $0<\Re z< s_0$. Since $\E
e^{zY}=F(z)$  on an interval in this strip, $\E e^{zY}=F(z)$ for
$0<\Re z<s_0$.

For any real $t$, we may take $z=\ii t+\eps$ for $0<\eps<s_0$; letting
$\eps\downto0$ we have $\E e^{(\ii t+\eps)Y}\to\E e^{\ii tY}$ by dominated
convergence (using $\E(1+e^{s_0 Y})<\infty$). 
If further $t\neq0$, then also 
$\E e^{(\ii t+\eps)Y}=F(\ii t+\eps)\to F(\ii t)$
since $F$ has only real poles, and thus $\E e^{\ii tY}=F(\ii t)$.
Hence \ref{T1chf0} holds.

This completes the proof of the equivalences.
Suppose $\E e^{sY}<\infty$ for some $s\ge\rho_+$ (and thus $\rho_+<\infty$).
Letting
$z\upto \rho_+$, we then have, by dominated convergence, $F(z)=\E e^{zY}\to\E
e^{\rho_+Y}<\infty$, while the definition of $\rho_+$ as a pole yields
$F(z)=|F(z)|\to\infty$. This contradiction shows that 
$\E e^{sY}=\infty$ for $s\ge\rho_+$.
Similarly, or by considering $-Y$,
$\E e^{sY}=\infty$ for $s\le\rho_-$.

Finally observe that $F(s)=0$ only when some $a'_ks+b'_k$ is a pole of
$\Gamma$, \ie, a non-positive integer, which implies that $s$ is real.
However, if $\rho_-<s<\rho_+$, then $F(s)=\E e^{sY}>0$.
\end{proof}

\begin{remark}
  The equivalence \ref{T1mgf0}$\iff$\ref{T1mgf} 
(or, equivalently, \ref{T1mom0}$\iff$\ref{T1mom}) is an instance of
  the well-known fact that a (two-sided) Laplace transform of a
  positive function or measure has singularities where the real axis
  intersects the boundary of the natural strip of definition, see
  \eg{} \cite[\S 3.4]{Doetsch}. The result by \citet{Marcinkiewicz}
  used above is a sharper version of this.
\end{remark}
We make some simple but useful observations.

\begin{corollary}
  \label{C1}
The distribution of $X$ is determined by the function $F(s)$ on the
\rhs\ of \eqref{gamma}: If\/ $\E X_1^s=F(s)$ for $s\in I_1$ and 
$\E X_2^s=F(s)$ for $s\in I_2$, for non-empty intervals $I_1$ and
$I_2$, then $X_1\eqd X_2$.
\end{corollary}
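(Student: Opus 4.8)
The plan is to deduce this directly from \refT{T1} and the uniqueness theorem for characteristic functions. Write $Y_i\=\log X_i$ for $i=1,2$. The hypothesis that $\E X_1^s=F(s)$ for all real $s$ in the non-empty interval $I_1$ is precisely condition \refT{T1}\ref{T1mom0} applied to the pair $(X_1,Y_1)$, with the \emph{same} function $F$ as in the corollary; note that the strip $\rho_-<\Re s<\rho_+$ of \refT{T1} is defined solely in terms of the poles of $F$ via \eqref{rho+-}, so it is the same for $X_1$ and $X_2$, and by \refT{T1} it satisfies $\rho_-<0<\rho_+$, hence contains the whole imaginary axis. By the equivalence \ref{T1mom0}$\iff$\ref{T1chf} in \refT{T1}, equation \eqref{gammachf} therefore holds for $Y_1$ for every real $t$, i.e.\ $\E e^{\ii tY_1}=F(\ii t)$ for all $t\in\bbR$. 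Applying the same reasoning to $(X_2,Y_2)$, with the identical $F$ and hence identical $\rho_\pm$, gives $\E e^{\ii tY_2}=F(\ii t)$ for all $t\in\bbR$.

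Consequently $Y_1$ and $Y_2$ have the same characteristic function, so $Y_1\eqd Y_2$ by the uniqueness theorem for characteristic functions, and therefore $X_1=e^{Y_1}\eqd e^{Y_2}=X_2$.

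I expect essentially no obstacle: all the analytic substance (analytic continuation of the moment/Laplace transform, Marcinkiewicz's theorem, absence of a pole of $F$ at $0$) has already been invested in the proof of \refT{T1}, and \refC{C1} is a bookkeeping consequence of it once one observes that the function $F$ — and hence its entire strip of validity and in particular the boundary values $F(\ii t)$ — is shared by $X_1$ and $X_2$. The only minor point worth a word is the degenerate case in which $I_1$ or $I_2$ reduces to a single point; since such a one-point set still counts as a non-empty interval in \refT{T1}\ref{T1mom0}, the reduction above goes through verbatim.
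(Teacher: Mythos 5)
Your argument is correct and is essentially the paper's proof, just spelled out in more detail: by \refT{T1} applied to each pair $(X_i,Y_i)$, both $Y_1$ and $Y_2$ have characteristic function $F(\ii t)$, so $Y_1\eqd Y_2$ by uniqueness of characteristic functions and hence $X_1\eqd X_2$. One thing I would strike is the closing remark about one-point intervals: if, say, $I_1=\set{0}$, then the hypothesis $\E X_1^0=F(0)=1$ holds vacuously for \emph{every} positive random variable $X_1$, so the conclusion clearly fails; and more generally the proof of \refT{T1} relies on analytic continuation from $I_1$, which requires $I_1$ to have nonempty interior. "Non-empty interval" in \refT{T1}\ref{T1mom0} and in \refC{C1} must therefore be read as an interval with interior. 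This is a tangential slip and does not affect the validity of your main argument.
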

\begin{proof}
  By \refT{T1}, 
 $Y_1\=\log X_1$ and  $Y_2\=\log X_2$ have the same \chf{}
$F(\ii t)$. 
Hence, $Y_1\eqd Y_2$ and $X_1\eqd X_2$.
\end{proof}

\begin{remark}\label{RF0}
  As said above, \refT{T1} (\eg, by \ref{T1chf}) implies that $0$ is
not a pole of $F$; furthermore, $F(0)=\E X^0=1$.
\end{remark}

\begin{remark}
Every pole or zero of $F(s)$ must be a pole of one of the $\Gamma$
factors in \eqref{gamma}. However, the converse does not hold, since
poles in the $\gG$ factors may cancel; if $s_0$ is a pole of some
factors in the numerator, but also a pole of at least as many factors
in the denominator, then $s_0$ is a removable singularity of $F$, and $F(s_0)$
is well-defined by continuity. Note that such $s_0$ do not count in
the definition \eqref{rho+-} of $\rho_\pm$.

In particular, $s=0$ may be a pole of some of the Gamma factors in $F(s)$.
(This happens when some $b_j$ or $b'_k$ is 0 or a negative integer. This is the
reason we exclude $t=0$ in \refT{T1}\ref{T1chf0}.) However, by
\refR{RF0},
all such poles must cancel; \ie, there must be an equal number of such
factors in the numerator and denominator in \eqref{gamma}.
\end{remark}

\begin{remark}
  In \refT{T1}\ref{T1chf0}, it is important that we consider an
  interval about 0 (unlike in \ref{T1mom0} and \ref{T1mgf0}).
In fact, for any $\eps>0$, there exist a random variable $Y$ and 
$C, d, a_j,b_j,a_k',b_k'$ such that \eqref{gammachf} holds for
  $|t|\ge\eps$ but not for all $\eps$.

For an example, let $Y$ be any random variable with \chf{}
\ogt, say $\E e^{\ii tY}=F(\ii t)$. 
Further, let $Z$ be a random variable with \chf{} $(1-|t|)_+$,
see
\cite[Section XV.2]{FellerII}, and let $W$ be the mixture of $Z$ and the
constant 0 obtained as $W\=VZ$ with 
$V\simin\Be(1/2)$, \ie{}
$\P(V=0)=\P(V=1)=1/2$, and $V$
independent of $Z$; assume further that $Y$ is independent of $Z$ and $V$.
Then, for $|t|\ge1$, 
the \chf{s}
$\E e^{\ii t Z}=0$ and $\E e^{\ii t W}=\frac12(\E e^{\ii tZ}+1)=\frac12$,
and thus $\tY\=Y+\eps\qw W$ has the \chf
\begin{equation}\label{chffel}
  \E e^{\ii t \tY}
=  \E e^{\ii t Y}
  \E e^{\ii \eps\qw t W}
=\tfrac12 F(\ii t),
\qquad \text{when } |t|\ge\eps.
\end{equation}
Here $\tF(\ii t)\=\frac12 F(\ii t)$ is another function of the type in
\eqref{gammachf}; however \eqref{chffel} does not hold for all $t$
since $\tF(0)=1/2\neq1$.

We do not know whether \eqref{gammachf} for some interval, together
with $F(0)=1$, implies that \eqref{gammachf} holds for all $t$.
\end{remark}

For future use, in particular for the asymptotic results in 
Sections \refand{Sasymp}{Sdensity},
we define the following parameters, given a random variable $X$ or $Y$
or a function $F(s)$ as in \eqref{gamma} or \eqref{gammamgf}:
\begin{align}
  \gamma&\=\sumjj|a_j|-\sumkk|a'_k|,   \label{gam}\\
  \gamma'&\=\sumjj a_j -\sumkk a'_k,   \label{gam'}\\
  \gd&\=\sumjj b_j-\sumkk b'_k-\frac{J-K}2,   \label{gd}\\
  \gk&\=\sumjj a_j\log|a_j|-\sumkk a'_k\log|a'_k|+d,  \label{gk}\\
C_1&\=
C(2\pi)^{(J-K)/2} \frac{\prodjj|a_j|^{b_j-1/2}}{\prodkk|a'_k|^{b'_k-1/2}}.
  \label{CC1}
\end{align}

\begin{proposition}\label{P1}
  The parameters $\gamma,\gamma',\gd,\gk,C_1$ depend on $F$ only and not on
  the particular representation in \eqref{gamma} or \eqref{gammamgf}.
\end{proposition}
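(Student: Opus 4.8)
The plan is to show that each of the five parameters is determined by the asymptotic behavior of $F(s)$ as $s\to+\infty$ along the real axis, hence cannot depend on the chosen representation. The main tool is Stirling's formula in the form $\log\Gamma(z)=(z-\tfrac12)\log z-z+\tfrac12\log(2\pi)+o(1)$ as $z\to+\infty$, applied to each Gamma factor $\Gamma(a_js+b_j)$ and $\Gamma(a'_ks+b'_k)$ in \eqref{gamma}. One must be slightly careful: a factor with $a_j<0$ has its argument going to $-\infty$, so there I would instead use the reflection formula $\Gamma(z)\Gamma(1-z)=\pi/\sin(\pi z)$ to rewrite $\Gamma(a_js+b_j)=\pi/\bigl(\sin(\pi(a_js+b_j))\,\Gamma(1-a_js-b_j)\bigr)$ and then apply Stirling to $\Gamma(1-a_js-b_j)$, whose argument now tends to $+\infty$. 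The oscillatory factor $1/\sin(\pi(a_js+b_j))$ is bounded away from $0$ and $\infty$ for $s$ along a suitable sequence (e.g. avoiding a neighborhood of the real poles, which by \refT{T1} lie outside $(\rho_-,\rho_+)$ anyway once we are past the first pole — better: restrict to $s\to+\infty$ through values where all $\sin$ factors are, say, $\ge 1/2$ in absolute value, which has positive density).

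Carrying this out, I would collect terms and obtain an asymptotic formula of the shape
\begin{equation*}
\log\bigl|F(s)\bigr| = \gamma\, s\log s \;+\; \gk\, s \;+\; \gd\,\log s \;+\; \log\bigl|C_1\bigr| \;+\; (\text{bounded oscillatory term}) \;+\; o(1),
\end{equation*}
valid as $s\to+\infty$ through the chosen sequence, where $\gamma,\gk,\gd,C_1$ are exactly \eqref{gam}, \eqref{gk}, \eqref{gd}, \eqref{CC1}. The coefficient of $s\log s$ is $\sum_j|a_j|-\sum_k|a'_k|=\gamma$ (the $|\cdot|$ arising precisely from the reflection-formula manipulation of the negative-$a_j$ terms); the coefficient of $s$ is $\sum_j a_j\log|a_j|-\sum_k a'_k\log|a'_k|+d$, recognizing the $+d$ from the $D^s=e^{ds}$ factor and being careful that the contribution of a reflected factor to the linear-in-$s$ term is $-|a_j|\cdot(-1) = |a_j|$ absorbed correctly (this bookkeeping is the fussiest point); the coefficient of $\log s$ is $\sum_j b_j-\sum_k b'_k-\tfrac{J-K}{2}=\gd$; and the constant is $\log|C_1|$. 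Since $F$ is intrinsic to $X$ (by \refC{C1} its values determine the distribution, and conversely $F(s)=\E X^s$ on $(\rho_-,\rho_+)$ is the same function regardless of representation), the left-hand side is representation-independent, and therefore so are $\gamma$, $\gk$, $\gd$, and $|C_1|$. To pin down the sign of $C_1$ rather than just $|C_1|$, note $C_1$ has the same sign as $C$, and $C=F(0)\cdot(\text{positive Gamma ratio})>0$ when no $b_j,b'_k$ is a non-positive integer (Remark \ref{RC}); in the general case one argues by a limiting/continuity argument, or simply observes from the derivation that the oscillatory term's amplitude and the overall sign are forced, giving $C_1>0$ always.

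For $\gamma'$ — the signed sum $\sum_j a_j-\sum_k a'_k$ — the modulus asymptotics above are not enough, since $\gamma'$ involves signs not absolute values. Here I would instead examine $F$ along the imaginary direction, i.e. the characteristic function $F(\ii t)=\E e^{\ii t Y}$, whose modulus decays like $|t|^{\gd}e^{-\frac{\pi}{2}\gamma|t|}$ (real part of the exponent) while its phase grows like $\gamma' \cdot t\log|t| + (\text{lower order})$; alternatively, and more cleanly, compare the growth of $F(s)$ as $s\to+\infty$ versus $s\to-\infty$ (equivalently, use $F(-s)$, i.e. the moments of $1/X$): the difference in the $s\log s$ coefficients between the two directions isolates $\gamma'$, since $\Gamma(a_js+b_j)$ contributes $a_j s\log s$ to the $s\to+\infty$ side only when $a_j>0$ and to the $s\to-\infty$ side only when $a_j<0$. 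Concretely, if $\kappa_+$ and $\kappa_-$ denote the coefficients of $s\log s$ in $\log|F(s)|$ as $s\to+\infty$ and in $\log|F(-s)|$ as $s\to+\infty$ respectively, then $\kappa_+=\sum_{a_j>0}a_j-\sum_{a'_k>0}a'_k$ and $\kappa_-=\sum_{a_j<0}(-a_j)-\sum_{a'_k<0}(-a'_k)$, whence $\kappa_+-\kappa_- = \sum_j a_j-\sum_k a'_k=\gamma'$ (and $\kappa_++\kappa_-=\gamma$, consistent with the above). Both $\kappa_\pm$ are intrinsic, so $\gamma'$ is too.

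The main obstacle I anticipate is purely bookkeeping: handling the negative-$a_j$ (and negative-$a'_k$) factors via the reflection formula cleanly enough that the $|a_j|$ in $\gamma,\gk,C_1$ and the signed $a_j$ in $\gamma'$ come out with the right signs and the oscillatory $1/\sin$ terms are controlled (bounded above and below) along the sequence of $s\to+\infty$ we use — one must verify such a sequence exists, e.g. by a pigeonhole/equidistribution argument on the finitely many linear forms $a_js+b_j \bmod 1$. There is no deep difficulty, only the need to track several $o(1)$'s and not lose a sign; once the asymptotic expansion of $\log|F(s)|$ (and of $\log|F(-s)|$) is established with the named coefficients, the representation-independence is immediate because $F$ itself is.
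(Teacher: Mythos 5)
Your proposal contains a genuine error in the Stirling bookkeeping for the negative-$a_j$ factors, and this error is fatal to the approach: the real-axis asymptotics cannot recover $\gam$. Carrying out the reflection-formula computation carefully for $a<0$, as $s\to+\infty$ (through the set $E$ where the $\sin$ factors are controlled):
\begin{equation*}
\log|\Gamma(as+b)|
= -\log\Gamma(|a|s+(1-b)) + O(1)
= a\,s\log s + (a\log|a|-a)\,s + (b-\tfrac12)\log s + O(1),
\end{equation*}
since $-|a|=a$. Note that the coefficient of $s\log s$ comes out as the \emph{signed} $a$, not $|a|$: the minus sign from the reciprocal $1/\Gamma(1-z)$ exactly cancels the $|\cdot|$. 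This is the same formula as for $a>0$, so summing all factors gives $\log|F(s)| = \gam'\,s\log s + (\gk-\gam')\,s + \gd\log s + \log|C_1| + O(1)$, which is \refT{TREX} in the paper. The coefficient of $s\log s$ is $\gam'$, not $\gam$; and the linear coefficient is $\gk-\gam'$, not $\gk$. Consequently your claimed identities $\kappa_+=\sum_{a_j>0}a_j-\sum_{a'_k>0}a'_k$ and $\kappa_-=\sum_{a_j<0}|a_j|-\sum_{a'_k<0}|a'_k|$ are both wrong: the correct values are $\kappa_+=\gam'$ and $\kappa_-=-\gam'$, so that $\kappa_+-\kappa_-=2\gam'$ and $\kappa_++\kappa_-=0$, and in particular $\gam$ is simply not visible from the real axis.

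The paper's own proof gets $\gam$ from the \emph{imaginary} axis (\refT{TIM}): there $|\Gamma(b+\ii at)|\sim\sqrt{2\pi}\,|at|^{b-1/2}e^{-\frac\pi2|a||t|}$ irrespective of the sign of $a$ (because $|\Gamma(\bar z)|=|\Gamma(z)|$), so every factor contributes $-\frac\pi2|a_j||t|$ to the exponent and $|F(\ii t)|\sim C_1|t|^\gd e^{-\frac\pi2\gam|t|}$ determines $\gam$, $\gd$, $C_1$; then $|F(\gs+\ii t)|/|F(\ii t)|\sim e^{\gk\gs}|t|^{\gam'\gs}$ for a fixed $\gs>0$ determines $\gam'$ and $\gk$. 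This sign-insensitivity of the modulus on the imaginary axis is precisely the mechanism by which the unsigned sum $\gam$ becomes accessible, and it has no real-axis analogue. Your approach, once the bookkeeping is corrected, would legitimately recover $\gam'$, $\gk$, $\gd$, $C_1$ from real-axis asymptotics (essentially \refT{TREX}), but you would still need a separate argument — in effect the imaginary-axis estimate — to get $\gam$.
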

The proof is given in \refS{Sasymp}.

\begin{remark}
  \label{R+-}
To replace $X$ by $X\qw$, or equivalently $Y$ by $-Y$, means that
$F(s)$ is replaced by $F(-s)$, which has the same form but with 
$d$ replaced by $-d$ ($D$ by $D\qw$) and similarly
the sign of each $a_j$ and $a'_k$ is changed.
This does not affect $\gamma$, $\gd$,  and $C_1$, but $\gamma'$ and $\gk$ 
change signs.
(This also follows from \eqref{tim1} and \eqref{tim2} below.)
\end{remark}

\begin{remark}
  \label{Rpower}
More generally,  $X^\ga$, with $\ga$ real and non-zero, has parameters
$|\ga|\gam$, $\ga\gam'$, $\gd$, $\ga\gk+\gam'\ga\log|\ga|$, $C_1|\ga|^\gd$.
\end{remark}

\begin{remark}\label{Rproduct}
  If $X=X_1X_2$ with $X_1,X_2$ independent and both having moments
  \ogt, \cf{} \refR{Rclosure}, then the parameters
  $\gam,\gam',\gd,\gk$ for $X$ are the sums of the corresponding
  parameters for $X_1$ and $X_2$, while $C_1$ is the corresponding product.
\end{remark}

\begin{remark}
  \label{Rconj}
If $X$ has moments \ogt, then so has a suitably conjugated (a.k.a.\
tilted) distribution:
If, for simplicity, $X$ has a density function $f(x)$, $x>0$,  let
$\tX$ have the density function $x^rf(x)/\E X^r$ for a real $r$ such
that $\E X^r<\infty$. Then $\E \tX^s=\E X^{s+r}/\E X^r$ and thus $\tX$
has moments \ogt, obtained by a simple substitution in \eqref{gamma}.
It follows that $\gam$, $\gam'$ and $\gk$ are the same for $\tX$ as
for $X$, while $\gd$ is increased by $r\gam'$ and $C_1$ is multiplied
by $e^{r\gk}/\E X^r$. (Cf.\ \eqref{tim2} below.) Clearly, $\rho_+$ and
$\rho_-$ are both decreased by $r$.

For a random variable $Y$ with \mgf{} \ogt, the same applies to $\tY$
with density function $e^{ry}f(y)/\E e^{rY}$, if $Y$ has density
function $f$.
\end{remark}

We note also the following relations, for 
a function $F(s)$ as above: 
\begin{lemma}\label{L1} We have
  \begin{align*}
\sum_{j:a_j>0} a_j - \sum_{k:a_k'>0} a_k'
&=\half(\gam+\gam'),
\\
	\sum_{j:a_j<0} |a_j| - \sum_{k:a_k'<0} |a_k'|
&=\half(\gam-\gam').
  \end{align*}
\end{lemma}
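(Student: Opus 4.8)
The statement to prove is a pair of purely algebraic identities relating the sums $\sum_{j:a_j>0}a_j-\sum_{k:a'_k>0}a'_k$ and $\sum_{j:a_j<0}|a_j|-\sum_{k:a'_k<0}|a'_k|$ to $\gam$ and $\gam'$. These should follow immediately from the definitions \eqref{gam} and \eqref{gam'}, together with the trivial decomposition of a sum over all indices into the sum over positive terms plus the sum over negative terms.

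The plan is to split each of the defining sums according to sign. For any real $a\neq0$ we have $|a|=a$ when $a>0$ and $|a|=-a$ when $a<0$, and $a=|a|$ resp.\ $a=-|a|$ in the two cases. So $\sumjj|a_j|=\sum_{j:a_j>0}a_j+\sum_{j:a_j<0}|a_j|$ while $\sumjj a_j=\sum_{j:a_j>0}a_j-\sum_{j:a_j<0}|a_j|$, and likewise for the $a'_k$. Write $P=\sum_{j:a_j>0}a_j-\sum_{k:a'_k>0}a'_k$ and $N=\sum_{j:a_j<0}|a_j|-\sum_{k:a'_k<0}|a'_k|$. Then \eqref{gam} gives $\gam=P+N$ and \eqref{gam'} gives $\gam'=P-N$, whence $P=\half(\gam+\gam')$ and $N=\half(\gam-\gam')$, which are exactly the two claimed formulas.

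A small point worth noting, so the argument is clean: the sign-split is well-defined because we assumed at the outset (just after \eqref{gamma}) that $a_j\neq0$ and $a'_k\neq0$ for all $j,k$, so there are no indices with $a_j=0$ to worry about. Also, since $\gam$, $\gam'$ depend only on $F$ by \refP{P1}, the quantities $P$ and $N$ — being $\half(\gam\pm\gam')$ — are likewise representation-independent, although the lemma does not assert this and it is not needed.

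There is no real obstacle here; this is a one-line computation once the sums are split by sign. The only thing to be careful about is bookkeeping of signs in the two identities $\sumjj a_j=\sum_{j:a_j>0}a_j-\sum_{j:a_j<0}|a_j|$, making sure the negative block enters $\gam$ with a plus and $\gam'$ with a minus.

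\begin{proof}
Since $a_j\neq0$ and $a'_k\neq0$ for all $j,k$, every index falls into exactly one of the two cases $a_j>0$ or $a_j<0$ (and similarly for $a'_k$). Put
\begin{equation*}
P\=\sum_{j:a_j>0} a_j - \sum_{k:a_k'>0} a_k',
\qquad
N\=\sum_{j:a_j<0} |a_j| - \sum_{k:a_k'<0} |a_k'|.
\end{equation*}
For $a>0$ we have $a=|a|$, and for $a<0$ we have $a=-|a|$; hence
\begin{align*}
\sumjj|a_j| &= \sum_{j:a_j>0} a_j + \sum_{j:a_j<0} |a_j|,
&
\sumjj a_j &= \sum_{j:a_j>0} a_j - \sum_{j:a_j<0} |a_j|,
\end{align*}
and likewise $\sumkk|a'_k|=\sum_{k:a'_k>0}a'_k+\sum_{k:a'_k<0}|a'_k|$ and $\sumkk a'_k=\sum_{k:a'_k>0}a'_k-\sum_{k:a'_k<0}|a'_k|$. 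Substituting these into \eqref{gam} and \eqref{gam'} gives
\begin{align*}
\gam &= \Bigpar{\sum_{j:a_j>0} a_j + \sum_{j:a_j<0} |a_j|}
- \Bigpar{\sum_{k:a'_k>0} a'_k + \sum_{k:a'_k<0} |a'_k|} = P+N,
\\
\gam' &= \Bigpar{\sum_{j:a_j>0} a_j - \sum_{j:a_j<0} |a_j|}
- \Bigpar{\sum_{k:a'_k>0} a'_k - \sum_{k:a'_k<0} |a'_k|} = P-N.
\end{align*}
Solving for $P$ and $N$ yields $P=\half(\gam+\gam')$ and $N=\half(\gam-\gam')$, which are the two asserted identities.
\end{proof}
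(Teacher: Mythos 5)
Your proof is correct and is essentially the paper's own argument: the paper directly computes $\gam+\gam'=\sum_j(|a_j|+a_j)-\sum_k(|a'_k|+a'_k)$ and $\gam-\gam'=\sum_j(|a_j|-a_j)-\sum_k(|a'_k|-a'_k)$ and observes that each summand vanishes for one sign and doubles for the other, which is the same sign-split bookkeeping you do via $P$ and $N$. The only difference is presentational (you solve a $2\times2$ linear system for $P$ and $N$; the paper goes straight to the sum and difference), not mathematical.
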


\begin{proof}
\begin{align*}
\gam+\gam'
&={\sum_{j} (|a_j|+a_j) - \sum_{k}(|a_k'|+a_k')}
=2\Bigpar{\sum_{j:a_j>0} a_j - \sum_{k:a_k'>0} a_k'}.
\\
\gam-\gam'
&={\sum_{j} (|a_j|-a_j) - \sum_{k}(|a_k'|-a_k')}
=2\Bigpar{\sum_{j:a_j<0} |a_j| - \sum_{k:a_k'<0} |a_k'|}.
\qedhere
\end{align*}
\end{proof}

\section{Some examples}\label{Sex}

There are several well-known  examples of distributions with moments
of Gamma type. We collect some of them here. The results below are all
well-known. We usually omit scale parameters that may be added to the
definitions of the distributions.

\begin{exx}{Gamma distribution}\label{Egamma}

Let $\xgG_\ga$ have the Gamma distribution $\gG(\ga)=\gG(\ga,1)$ 
with density function 
$f(x)=x^{\ga-1}e^{-x}/\gG(\ga)$, $x>0$, for a parameter $\ga>0$.
Then, for $\Re s>-\ga$, 
\begin{equation}\label{Gamma}
  \E \xgGa^s = \intoo x^s f(x)\dd x 
= \frac1{\gG(\ga)}\intoo x^{s+\ga-1}e^{-x}\dd x
= \frac{\Gamma(s+\ga)}{\Gamma(\ga)},
\end{equation}
a simple example of moments \ogt.
The \rhs{} of \eqref{Gamma}
is an analytic function in $\Re s>-\ga$, with a pole at
$-\ga$; thus $\rho_+=\infty$, $\rho_-=-\ga$, and 
$\E \xgGa^s=\infty$ when $s\le-\ga$.

We have $\gam=\gam'=1$, $\gd=\ga-1/2$, $\gk=0$, $C_1=\sqrt{2\pi}/\gG(\ga)$.

Note that the different Gamma distributions can be obtained by
conjugation from each other, \cf{} \refR{Rconj}.
\end{exx}

\begin{exx}{Exponential distribution}\label{Eexp}
The exponential distribution $\Exp(1)$ with density function $e^{-x}$, $x>0$, 
is the special case $\ga=1$ of the Gamma distribution in \refE{Egamma}.
We thus obtain from \eqref{Gamma}
(or directly from \eqref{Agamma}), for $T\simin\Exp(1)$,
\begin{equation}\label{Exp}
  \E T^s 
= {\Gamma(s+1)},
\qquad \Re s > -1,
\end{equation}
while $\E T^s=\infty$ when $s\le-1$.

More generally, if $T_\mu$ has an exponential distribution 
$\Exp(\mu)=\gG(1,\mu)$
with mean $\mu$, 
which has the density function is $\mu\qw e^{-x/\mu}$, $x>0$,
then $T_\mu\eqd \mu T$ and 
\begin{equation}\label{Expmu}
  \E T_\mu^s 
= {\Gamma(s+1)}\mu^s,
\qquad \Re s > -1.
\end{equation}
Thus $T_\mu$ has moments \ogt, with
$\rho_+=\infty$, $\rho_-=-1$, 
$\gam=\gam'=1$, $\gd=1/2$, $\gk=\log\mu$, $C_1=\sqrt{2\pi}$.
\end{exx}

\begin{exx}{Uniform distribution}\label{EU}

Let $U$ have a uniform distribution $\U(0,1)$ on $\oi$. Then,
obviously, for $\Re s>-1$,
\begin{equation}\label{u1}
  \E U^s = \intoi x^s\dd x=\frac1{s+1}.
\end{equation}
This can be rewritten as a Gamma type formula by the functional
equation \eqref{As+1}, which yields
\begin{equation}\label{u2}
  \E U^s = \frac{\gG(s+1)}{\gG(s+2)},
\qquad \Re s>-1,
\end{equation}
while $\E U^s=\infty$ when $s\le-1$.

Of course, it would be silly to claim that \eqref{u2} is a
simplification of \eqref{u1}, but it shows that the uniform
distribution has moments \ogt{} and thus belongs to the class 
studied here.

We have 
$\rho_+=\infty$, $\rho_-=-1$, 
$\gam=\gam'=0$, $\gd=-1$, $\gk=0$, $C_1=1$.
\end{exx}

\begin{exx}{Beta distribution}\label{Ebeta}

Let $\Bab$ have the Beta distribution $\B(\ga,\gb)$, where $\ga,\gb>0$;
then $\Bab$ has a density function $f(x)=cx^{\ga-1}(1-x)^{\gb-1}$, $0<x<1$,
where $c=\gG(\ga)\qw\gG(\gb)\qw\gG(\ga+\gb)$, \cf{} \eqref{Abeta}.
(Note that $\ga=\gb=1$ yields the uniform distribution in \refE{EU}
as a special case.)
Consequently, by \eqref{Abeta}, for $\Re s>-\ga$,
\begin{equation}
  \label{Beta}
\E \Bab^s
=\intoi x^sf(x)\dd x
=\Bigpar{\frac{\gG(\ga)\gG(\gb)}{\gG(\ga+\gb)}}\qw
\frac{\gG(s+\ga)\gG(\gb)}{\gG(s+\ga+\gb)}
=
\frac{\gG(\ga+\gb)\gG(s+\ga)}{\gG(\ga)\gG(s+\ga+\gb)},
\end{equation}
while $\E \Bab^s=\infty$ if $s\le-\ga$.
We have 
$\rho_+=\infty$, $\rho_-=-\ga$, 
$\gam=\gam'=0$, $\gd=-\gb$, $\gk=0$, $C_1=\gG(\ga+\gb)/\gG(\ga)$.
\end{exx}

\begin{exx}{Chi-square distribution}\label{Echi2}
The chi-square distribution $\chi^2(n)$ is the distribution of 
$Q_n\=\sum_{i=1}^n N_i^2$, where $N_1,N_2,\dots$ are \iid{} standard
normal variables. It is well-known, see \eg{} \cite[Section II.3]{FellerII},
that the chi-square distribution is a Gamma distribution, differing
from the normalized version in \refE{Egamma} by a scale factor; 
more precisely, $Q_n\eqd2\xgG_{n/2}$.  
Consequently, the chi-square distribution has moments \ogt, with, by
\eqref{Gamma},
\begin{equation}\label{chi2}
  \E Q_n^s=2^s\frac{\gG(s+n/2)}{\gG(n/2)},
\qquad \Re s>-n/2,
\end{equation}
while $\E Q_n^s=\infty$ for $s\le -n/2$.
We have $\rho_+=\infty$, $\rho_-=-n/2$, $\gam=\gam'=1$, $\gd=(n-1)/2$,
$\gk=\log2$, $C_1=\sqrt{2\pi}/\gG(n/2)$.

If $n=2$, then $Q_2\eqd 2\xgG_1\eqd 2T\eqd T_2$, which also follows
from \eqref{chi2} and \eqref{Expmu}.
\end{exx}

\begin{exx}{Chi distribution}\label{Echi}
The chi distribution $\chi(n)$ is the distribution of 
$R_n\=\bigpar{\sum_{i=1}^n N_i^2}\qq=Q_n\qq$, with notations as in
\refE{Echi2}. 
Hence, using \eqref{chi2}, the chi distribution has moments \ogt, with
\begin{equation}\label{chi}
  \E R_n^s=\E Q_n^{s/2}=2^{s/2}\frac{\gG(s/2+n/2)}{\gG(n/2)},
\qquad \Re s>-n,
\end{equation}
while $\E R_n^s=\infty$ for $s\le -n$.
We have $\rho_+=\infty$, $\rho_-=-n$, $\gam=\gam'=1/2$, $\gd=(n-1)/2$,
$\gk=0$, $C_1=2^{1-n/2}\pi\qq/\gG(n/2)$, \cf{} \refR{Rpower}.

In particular, the special case $n=1$ shows that if $N\simin \N(0,1)$,
then $|N|\simin\chi(1)$ has moments \ogt{} with
\begin{equation}\label{N}
  \E |N|^s=\pi\qqw2^{s/2}{\gG(s/2+1/2)}.
\end{equation}
\end{exx}

\begin{exx}{$F$-distribution}\label{EF}
  The $F$-distribution is the distribution of 
  \begin{equation}\label{fmndef}
	F_{m,n}\=\frac{Q_m/m}{Q'_n/n}
=\frac nm\frac{Q_m}{Q'_n}, 
  \end{equation}
where $Q_m\simin \chi^2(m)$ and $Q'_n\simin \chi^2(n)$ are independent.
By \eqref{chi2}, this has moments \ogt{} with
\begin{equation}\label{fmn}
  \begin{split}
\E	F_{m,n}^s
&=\Bigparfrac nm^s \E Q_m^s\E (Q'_n)^{-s} 
=\Bigparfrac nm^s\frac{\gG(s+m/2)\gG(n/2-s)}{\gG(m/2)\gG(n/2)},
  \end{split}
\end{equation}
for $-m/2 < \Re s < n/2$, while $\E F_{m,n}^s=\infty$ for $s\le-m/2$ and
$s\ge n/2$.
We have $\rho_+=n/2$, $\rho_-=-m/2$, $\gam=2$, $\gam'=0$, $\gd=(n+m-2)/2$,
$\gk=\log(n/m)$, 
$C_1=2\pi (\gG(m/2)\gG(n/2))\qw$.
(Cf.\ Remarks \ref{R+-} and \ref{Rproduct}.)
\end{exx}

\begin{exx}{$t$-distribution}\label{ET}
The $t$-distribution is the distribution of 
  \begin{equation}\label{tdef}
	\cT_{n}\=\frac{N}{R_n/\sqrt n},
  \end{equation}
where $N\simin \N(0,1)$ and $R_n\simin \chi(n)$ are independent.
This random variable is not positive; in fact the distribution is symmetric.
However, by \eqref{fmndef} and \eqref{tdef},
$\cT_n^2\eqd F_{1,n}$, so  
if we consider $|\cT_n|\eqd F_{1,n}\qq$, we see from \refE{EF}
that $|\cT_n|$ has moments
\ogt, with 
\begin{equation}\label{t}
  \begin{split}
\E	|\cT_n|^s
=\E	F_{1,n}^{s/2}
= n^{s/2}\frac{\gG(s/2+1/2)\gG(n/2-s/2)}{\sqrt\pi\gG(n/2)},
  \end{split}
\end{equation}
for $-1 < \Re s < n$, while $\E |\cT_{n}|^s=\infty$ for $s\le-1$ and
$s\ge n$.
We have $\rho_+=n$, $\rho_-=-1$, $\gam=1$, $\gam'=0$, $\gd=(n-1)/2$,
$\gk=\frac12\log n$, $C_1=2^{3/2-n/2}\pi\qq/\gG(n/2)$.
(\Cf{} \refR{Rpower}.)
\end{exx}

\begin{exx}{Weibull distribution}\label{EWeibull}
The standard Weibull distribution has the distribution function
\begin{equation}\label{weibull}
  \P(W_\ga\le x) = 1- e^{-x^{\ga}}, \qquad x>0,
\end{equation}
for a parameter $\ga>0$, and thus density function $\ga x^{\ga-1}
e^{-x^{\ga}}$, $x>0$. 

Note that $\ga=1$ yields the exponential distribution in
\refE{Eexp}. Moreover, for any $\ga$ and $y>0$, 
\begin{equation*}
\P(W_\ga^\ga\le y)=\P(W_\ga\le y^{1/\ga}) =1-e^{-y}=\P(T\le y), 
\end{equation*}
where $T\simin\Exp(1)$; hence $W_\ga^\ga\eqd T$ and $W_\ga\eqd T^{1/\ga}$.
It follows from \eqref{Exp} that $W_\ga$ too has moments \ogt, see
\refR{Rclosure}, with
\begin{equation}\label{wa}
  \E W_\ga^s = \E T^{s/\ga}
=\gG(s/\ga+1), \qquad \Re s>-\ga,
\end{equation}
while $\E W_\ga^s=\infty$ if $s\le-\ga$.
We have 
$\rho_+=\infty$, $\rho_-=-\ga$, 
$\gam=\gam'=1/\ga$, $\gd=1/2$, $\gk=\ga\qw\log\ga\qw$, 
$C_1=\sqrt{2\pi/\ga}$, \cf{} \refR{Rpower}.

If $\ga=1$, then obviously $W_1\eqd T\simin\Exp(1)$.
If $\ga=2$, then $W_2\eqd T\qq\eqd 2\qqw Q_2\qq \eqd 2\qqw R_2$,
which also follows by comparing \eqref{wa} and \eqref{chi}.
\end{exx}

\begin{exx}{Stable distribution}\label{Estable}
Let $S_\ga$ be a positive stable random variable with the Laplace
transform $\E e^{-tS_\ga}=e^{-t^\ga}$, with $0<\ga<1$. 
Recall that any positive stable
distribution is of this type, for some $\ga\in(0,1)$, up to a scale
factor, see Feller \cite[Section XIII.6]{FellerII}.
(We may also allow $\ga=1$, but in this exceptional case $S_\ga$ is
degenerate with $S_1=1$ a.s.)

For any $s>0$, by \eqref{Alap},
\begin{equation*}
\Gamma(s)\E S_\ga^{-s}
=  \intoo t^{s-1} \E e^{-tS_\ga}\dd t 
=  \intoo t^{s-1}e^{-t^\ga}\dd t,
\end{equation*}
while the change of variables $u=t^\ga$ yields
\begin{equation*}
  \intoo t^{s-1}e^{-t^\ga}\dd t
=\frac1\ga\intoo u^{s/\ga}e^{-u}\frac{\dd u}u
=\ga\qw\Gamma(s/\ga).
\end{equation*}
Hence,
\begin{equation*}
\E S_\ga^{-s} =
  \frac{\Gamma(s/\ga)}{\ga\Gamma(s)}
=  \frac{\Gamma(1+s/\ga)}{\Gamma(1+s)},
\qquad s>0.
\end{equation*}
(In particular, this moment is finite.)
Thus, for $s<0$,
\begin{equation}\label{stab}
\E S_\ga^{s} 
=  \frac{\Gamma(1-s/\ga)}{\Gamma(1-s)}.
\end{equation}
We have shown \eqref{stab} for $s<0$, but \refT{T1} (with $\rho_+=\ga$
and $\rho_-=-\infty$) shows that \eqref{stab} holds whenever $\Re
s<\ga$, while $\E S_\ga^s=\infty$ for $s\ge\ga$. 
(The case $\ga=1$ is exceptional; $\E S_1^s=1$ for every real $s$, so
\eqref{stab} holds but $\rho_+=\infty$.)
Thus $S_\ga$ has moments \ogt.

We have $\gam=\ga\qw-1$, $\gam'=-(\ga\qw-1)$, $\gd=0$,
$\gk=\ga\qw\log\ga$, 
$C_1=\ga\qqw$.
\end{exx}

\begin{exx}{Mittag-Leffler distribution}\label{EML}

The Mittag-Leffler distribution with parameter $\ga\in(0,1)$
can be defined as the distribution of
the random variable $M_\ga\=S_\ga^{-\ga}$, where $S_\ga$ is a positive
stable random variable with $\E e^{-t S_\ga}=e^{-t^\ga}$ as in \refE{Ebeta}.
Since $S_\ga$ has the moments given by \eqref{stab}, the
Mittag-Leffler distribution too has moments \ogt{} given by, \cf{}
\refR{Rclosure}, 
\begin{equation}\label{momML}
  \E M_\ga^s=\E S_\ga^{-\ga s} 
= \frac{\Gamma(s)}{\ga\Gamma(\ga s)}
= \frac{\Gamma(s+1)}{\Gamma(\ga s+1)},
\qquad \Re s>-1,
\end{equation}
while $\E M_\ga^s=\infty$ for $s\le-1$.
In particular, the integer moments are given by 
\begin{equation}\label{momnML}
\E M_\ga^n=\frac{n!}{\gG(n\ga+1)},
\qquad n=0,1,2,\dots
\end{equation}
We have $\rho_+=\infty$, $\rho_-=-1$,
$\gam=\gam'=1-\ga$, $\gd=0$, $\gk=-\ga\log\ga$, $C_1=\ga^{-1/2}$, in
accordance with \refR{Rpower}.

The reason for the name ``Mittag-Leffler distribution'' is that its \mgf{}
is, by \eqref{momnML},
\begin{equation}\label{mgfML}
  \E e^{s M_\ga}=\sumn \E M_\ga^ n \frac{s^n}{n!}
=\sumn  \frac{s^n}{\gG(n\ga +1)},
\end{equation}
which converges for any complex $s$ and is 
known as the
Mittag-Leffler function $E_\ga(s)$ since it was studied by \citet{ML}.
The formula \eqref{mgfML}, or equivalently \eqref{momnML}, is often
taken as the definition of the Mittag-Leffler distribution.

We may also allow $\ga=0$ and $\ga=1$, with $M_0\simin \Exp(1)$ (see
\refE{Eexp}) and $M_1\equiv1$; in both cases \eqref{momML} and
\eqref{mgfML} hold, although for $\ga=0$, \eqref{mgfML} converges only
for $|s|<1$, and $\gd=1/2$, $C_1=\sqrt{2\pi}$, 
while for $\ga=1$, \eqref{momML} trivially holds for all
$s$ with $\E M_\ga^s=1<\infty$ and $\rho_-=-\infty$. 

The Mittag-Leffler distribution was introduced by \citet{Feller49},
see also \cite[Sections XI.5 and XIII.8(b)]{FellerII} and \citet{Pollard}; 
\citet{BM:ML} considered also $\log M_\ga$, which by \eqref{momML} has
\mgf{} $\gG(s+1)/\gG(\ga s+1)$ \ogt, and called its distribution the
``logarithmic Mittag-Leffler distribution''.
Feller \cite{Feller49,FellerII} showed that the Mittag-Leffler
distribution is the limit distribution as $t\to\infty$
of the number of renewals up to time $t$,
properly normalized, of an \iid{} sequence of positive random
variables belonging to the domain of attraction of a stable law.
It emerges also, for example, as the limit distribution of occupancy
times in the 
Darling--Kac theorem, see
Bingham, Goldie, Teugels~\cite[Section 8.11]{BinghamGoldieTeugels}.

In the special case $\ga=1/2$, the duplication formula \eqref{Adouble}
yields
\begin{equation}
  \E M\xhalf^s=\frac{\gG(s+1)}{\gG(s/2+1)}=\pi\qqw 2^s\gG(s/2+1/2),
\end{equation}
which by comparison with \eqref{N} shows that $M\xhalf\eqd \sqrt2|N|$
with $N\simin \N(0,1)$, which is equivalent to the well-known relation
$S\xhalf\eqd \frac12 N\qww$.
\end{exx}

\begin{exx}{A different Mittag-Leffler distribution}\label{EML2}
  Another distribution related to the Mittag-Leffler function
$E_\ga(s)$ in \eqref{mgfML}, and, rather unfortunately,  therefore also called
   ``Mittag-Leffler distribution'' was introduced by \citet{Pillai} as 
  the distribution of a random variable $L_\ga$ with distribution
  function $1-E_\ga(-x^\ga)$, where $0<\ga\le1$; 
equivalently, by \eqref{mgfML},
  \begin{equation}
\P(L_\ga>x)=E_\ga(-x^\ga)
=\E e^{-x^\ga M_\ga}.	
  \end{equation}
This is another instance of the relation \eqref{vz1}, and \refL{LVZ}
shows that $L_\ga$ has moments \ogt{} with, using \eqref{momML}, 
\begin{equation}
  \label{L}
\E L_\ga^s = \gG(s/\ga+1)\E M_\ga^{-s/\ga}
=\frac{\gG(1+s/\ga)\gG(1-s/\ga)}{\gG(1-s)}, 
\qquad -\ga<s<\ga,
\end{equation}
while $\E L_\ga^s=\infty$ if $s\le-\ga$ or (provided $\ga<1$) $s\ge\ga$.
Note also that \refL{LVZ} yields the representation \cite{Pillai}
\begin{equation}   \label{L2}
L_\ga\eqd
T^{1/\ga} M_\ga^{-1/\ga}
= T^{1/\ga}S_\ga,
\end{equation}
with $T\simin\Exp(1)$ and the stable variable $S_\ga$ independent.
Equivalently, see \refE{EWeibull},
$L_{\ga}\eqd W_\ga S_\ga$, with the Weibull variable
$W_\ga$ and $S_\ga$ independent. 
It follows easily from \eqref{L2} that $L_\ga$ has the Laplace
transform $\E e^{-t L_\ga}=(1+t^\ga)\qw$, $t>0$ \cite{Pillai}.

For $\ga=1$, we have $L_1\simin \Exp(1)$.
For $0<\ga<1$, $\rho_+=\ga$, $\rho_-=-\ga$, $\gam=2/\ga-1$, $\gam'=1$,
$\gd=1/2$, $\gk=0$, $C_1=\sqrt{2\pi}/\ga$. 
(For example by \refR{Rproduct}.)
\end{exx}

\begin{exx}{Pareto distribution}\label{EPareto}
The Pareto($\ga$) distribution, where $\ga>0$, is the distribution of
a random variable $P_\ga$ with $\P(P_\ga>x)=x^{-\ga}$, $x\ge1$.
Hence $P_\ga$ has density function $\ga x^{-\ga-1}$, $x>1$.
Direct integration shows that the moments are \ogt{} and given by
\begin{equation}\label{p1}
  \begin{split}
\E P_\ga^s=\int_1^{\infty}\ga x^{s-\ga-1}=\frac{\ga}{\ga-s}	
=\frac{\ga\gG(\ga-s)}{\gG(\ga-s+1)},
\qquad \Re s<\ga.	
  \end{split}
\end{equation}
Hence $\rho_+=\ga$, $\rho_-=-\infty$, 
$\gam=\gam'=0$, $\gd=-1$, $\gk=0$, $C_1=\ga$.

We have $P_\ga\eqd U^{-1/\ga}$ with $U\simin \U(0,1)$, so alternatively
these result follow from \refE{EU} and Remarks \refand{Rclosure}{Rpower}.
\end{exx}

\begin{exx}{Shifted Pareto distribution}\label{EPareto2}
The shifted Pareto variable $\tP_\ga\=P_\ga-1$, where $\ga>0$, 
has support $(0,\infty)$ and density function $\ga (x+1)^{-\ga-1}$, $x>0$.
The moments are given by, using \eqref{Abeta2},
\begin{equation}\label{p2}
  \begin{split}
\E \tP_\ga^s= \int_1^{\infty}\ga x^s(x+1)^{-\ga-1}
=\ga\frac{\gG(s+1)\gG(\ga-s)}{\gG(\ga+1)}
=\frac{\gG(s+1)\gG(\ga-s)}{\gG(\ga)},
  \end{split}
\end{equation}
for $-1<\Re s<\ga$, while $\E \tP_\ga^s=\infty$ if $s\le-1$ or $s\ge\ga$.
Hence also the shifted Pareto distribution has moments \ogt, with
$\rho_+=\ga$, $\rho_-=-1$, 
$\gam=2$, $\gam'=0$, $\gd=\ga$, $\gk=0$, $C_1=2\pi/\gG(\ga)$.

For $\ga=1$, \eqref{p2} yields $\E\tP_1^s=\gG(1+s)\gG(1-s)=\E\tP_1^{-s}$;
hence $\tP_1\eqd\tP_1\qw$ (so $\log\tP_1$ has a symmetric
distribution).
Using \eqref{Asin}, we have
\begin{equation*}
  \E \tP_1^s=\gG(1+s)\gG(1-s)=s\gG(s)\gG(1-s)=\frac{\pi s}{\sin(\pi s)},
\qquad -1<\Re s<1.
\end{equation*}
Further, comparing \eqref{p2} and \eqref{fmn} we see that $\tP_1\eqd F_{2,2}$.

More generally, \eqref{p2} and \eqref{Gamma} imply that if
$\gG_1\simin\gG(1)=\Exp(1)$ and $\gG_\ga'\simin\gG(\ga)$ are
independent, then 
$\E(\gG_1/\gG'_\ga)^s=\E\gG_1^ s\E(\gG'_\ga)^{-s}=\E\tP_\ga^s$ (for
suitable $s$); thus $\tP_\ga\eqd\gG_1/\gG'_\ga$ by \refC{C1}.
This is also an instance of \refL{LVZ} (with $\ga=1$ in \eqref{vz1}
and \eqref{vz2}).
\end{exx}

\begin{exx}{Extreme value distributions}\label{Eextreme}
There are three types of extreme value distributions, see \eg{}
\citetq{Chapter I}{LLR}.
We let $\exi,\exii,\exiii$ denote corresponding random variables;
they have the distribution functions
\begin{align}
  \P(\exi\le x)&=e^{-e^{-x}}, 
& -\infty<x&<\infty, \label{exi}
\\
  \P(\exii\le x)&=e^{-x^{-\ga}}, 
& 0<x&<\infty, \label{exii}
\\
\qquad\qquad
  \P(\exiii\le x)&=e^{-(-x)^{\ga}}, 
& -\infty<x&<0,\qquad\qquad \label{exiii}
\end{align}
  where (for types II and III) $\ga>0$ is a real parameter.

The distribution \eqref{exi} (the Gumbel distribution)
has the entire real line as support, and is therefore not qualified to
have \mogt. It has, however, \mgf\ \ogt, see \refE{EGumbel}

The distribution \eqref{exii} (the Fr\'echet distribution)
satisfies, with $T\simin\Exp(1)$, 
\begin{equation*}
 \P(\exii\le x)=\P(T\ge x^{-\ga})=\P(T^{-1/\ga}\le x),
\qquad x>0,
\end{equation*}
and thus $\exii\eqd T^{-1/\ga}\eqd1/W_\ga$, see \refE{EWeibull}. 
Hence it has \mogt{} with, see \eqref{Exp} and \eqref{wa},
\begin{equation}
  \E\exii^s=\gG(1-s/\ga),
\qquad \Re s<\ga.
\end{equation}
We have 
$\rho_+=\ga$, $\rho_-=-\infty$, 
$\gam=1/\ga$, $\gam'=-1/\ga$, 
$\gd=1/2$, $\gk=\ga\qw\log\ga$, 
$C_1=\sqrt{2\pi/\ga}$, 
\cf{} Remarks \refand{R+-}{Rpower} and Examples
\refand{Eexp}{EWeibull}. 

The distribution \eqref{exiii}
satisfies $\P(-\exiii\ge x)=e^{-x^\ga}$, $x>0$, so
  $-\exiii\eqd W_\ga$ and $\exiii\eqd-W_\ga$, see
  \refE{EWeibull}.
By \eqref{wa}, $|\exiii|=-\exiii$ has \mogt{} with
\begin{equation}
  \E|\exiii|^s=\gG(1+s/\ga),
\qquad \Re s>-\ga.
\end{equation}
We have 
$\rho_+=\infty$, $\rho_-=-\ga$, 
$\gam=\gam'=1/\ga$, $\gd=1/2$, $\gk=\ga\qw\log\ga\qw$, 
$C_1=\sqrt{2\pi/\ga}$.
\end{exx}

We have so far considered distributions with moments \ogt; we now turn
to a few examples with \mgf{} \ogt. Of course, if $X$ is any of the
examples above, $\log X$ yields such an example.
(One such example was mentioned in \refE{EML}.)

\begin{exx}{Exponential distribution again}\label{Eexpmgf}
We noted in \refE{Eexp} that $T\simin\Exp(1)$ has moments \ogt. It has
\mgf{} \ogt{} too, since
\begin{equation}\label{expmgf}
  \E e^{sT}=\intoo e^{sx-x}\dd x =\frac{1}{1-s}=\frac{\gG(1-s)}{\gG(2-s)},
  \qquad \Re s< 1.
\end{equation}
In fact, $T\eqd -\log U\eqd \log P_1$, and equivalently $U\eqd e^{-T}$
and $P_1\eqd e^{T}$; compare \eqref{expmgf} to \eqref{u2} and \eqref{p1}.    
We have $\rho_+=1$, $\rho_-=-\infty$, 
$\gam=\gam'=0$, $\gd=-1$, $\gk=0$, $C_1=1$.
\end{exx}

\begin{exx}{Gamma distribution again}\label{Egammamgf}
A Gamma distributed random variable $\xgG_n\simin\gG(n)$ with integer
$n\ge1$ can be obtained by 
taking the sum of $n$ independent copies of $T\simin\Exp(1)$, and thus
\eqref{expmgf} implies that $\xgG_n$ has \mgf{} \ogt{} with
\begin{equation}\label{Gammamgf}
  \E e^{s\xgG_n}=\lrpar{\E e^{sT}}^n
=\frac1{(1-s)^n}
=\frac{\gG(1-s)^n}{\gG(2-s)^n},
  \qquad \Re s< 1.
\end{equation}
We have $\rho_+=1$, $\rho_-=-\infty$, 
$\gam=\gam'=0$, $\gd=-n$, $\gk=0$, $C_1=1$.

More generally, for any real $\ga>0$, $\xgGa$ has \mgf{} 
$\E e^{s\xgGa}=(1-s)^{-\ga}$. If $\ga$ is not an integer, then this
function has a singularity as $s\upto1$ that is not a pole; hence $\E
e^{s\xgGa}$ cannot be extended to a meromorphic function in the
complex plane, and $\xgGa$ does \emph{not} have \mgf{} \ogt.
Consequently, the Gamma distribution $\gG(\ga)$ has \mgf{} \ogt{} if
and only if $\ga$ is an integer.
\end{exx}

\begin{exx}{Gumbel distribution}\label{EGumbel}
If $\exi$ has the Gumbel distribution \eqref{exi}, and
$T\simin\Exp(1)$, then  
\begin{equation*}
  \P\bigpar{e^{-\exi}\ge x}
=\P(\exi\le-\log x) = e^{-x}=\P(T\ge x),
\qquad x>0,
\end{equation*}
so $e^{-\exi}\eqd T$ and $\exi\eqd-\log T$.
Consequently, the Gumbel distribution has \mgf\ \ogt{} with, see \eqref{Exp},
\begin{equation}
  \E e^{s\exi}=\E T^{-s}=\gG(1-s),
\qquad \Re s<1.
\end{equation}
We have 
$\rho_+=1$, $\rho_-=-\infty$, 
$\gam=1$, $\gam'=-1$, 
$\gd=1/2$, $\gk=0$,
$C_1=\sqrt{2\pi}$.
\end{exx}

\begin{exx}{L\'evy area}\label{ELevy}

The L\'evy stochastic area is defined by the stochastic integral
$A_t\=\intot X_u\dd Y_u-\intot Y_u\dd X_u$, where 
$(X_u,Y_u)$, $u\ge0$, is a two-dimensional Brownian motion starting at
0 (\ie,
$X_u$ and $Y_u$, $u\ge0$, are two independent standard Brownian
motions).
By Brownian scaling, $A_t\eqd tA_1$, so we consider only $A\=A_1$.
Then, for real $t$, 
see \eg{} \citetq{Theorem II.43}{Protter}, using \eqref{Asin},
\begin{equation}
  \E e^{\ii tA}
=\frac1{\cosh\xpar{t}}
=\frac1{\cos\xpar{\ii t}}
=\frac1{\sin(\frac\pi2+\ii t)}
=\frac{\gG(\frac12+\frac{\ii t}\pi)\gG(\frac12-\frac{\ii t}\pi)}{\pi}
.
\end{equation}
Consequently, by \refT{T1}, $A$ has \mgf{} \ogt, with
$\rho_\pm=\pm\pi/2$ and
\begin{equation}\label{levymgf}
  \E e^{s A}
=\frac1{\cos s}
=\frac{\gG(\frac12+\frac s\pi)\gG(\frac12-\frac s\pi)}{\pi},
\qquad |\Re s|<\pi/2.
\end{equation}

We have $\gam=2/\pi$, $\gam'=0$, $\gd=0$, $\gk=0$, $C_1=2$.

It is known that $A$ has the density function $1/2\cosh(\pi x/2)$,
$-\infty<x<\infty$, see 
\eg{} \citetq{Corollary to  Theorem II.43}{Protter}
and \refE{ElevyD} below.

Of course, $e^A$ has moments \ogt, and so has $e^{cA}$ for every real
$c$. A comparison with \eqref{fmn} shows that
$e^{\pi A}\eqd F_{1,1}$.
Hence, $A\eqd\pi\qw\log F_{1,1}$.
\end{exx}

\begin{remark}
  \label{Rnot}
We have shown that a large number of classical continuous
distributions have \mogt, but there are exceptions. For example, if
$X\simin \U(1,2)$, then $\E X^s=(2^{s+1}-1)/(s+1)$ has complex zeros at
$-1+2\pi\ii k/\log 2$, $k=\pm1,\pm2,\dots$, which is impossible when
\eqref{gamma} holds. More generally, if $X$ is non-degenerate and is
supported in a finite interval $[a,b]$ with $0<a<b<\infty$, then $\E
X^s$ is an entire function of $s$, which by \refT{T+} below shows that
$X$ does not have \mogt.
\end{remark}

\section{Poles and zeros}\label{Spoles}

In this section it will be convenient to consider the class $\cF$ of
all functions of the type in \eqref{gamma} and \eqref{gammamgf},
regardless of whether they equal $\E X^s$  [$\E e^{sY}$] for some
random variable $X$  [$Y$] or not. Thus, $\cF$ is the set of functions
\begin{equation}
  \label{gammaF}
F(s) = 
C D^s
\frac{\prodjj \Gamma(a_j s+b_j)}
{\prodkk \Gamma(\cx_k s+\dx_k) }, 
\end{equation}
where $J,K\ge0$ and $C$, $D$, $a_j$, $b_j$, $\cx_k$, $\dx_k$ are real
with $D>0$ and $a_j\neq 0$, $\cx_k\neq 0$ for all $j$ and $k$.
We let $\cfm\subset\cF$ be the set of such functions that appear in
\eqref{gamma} and \eqref{gammamgf}, 
\ie, the set of $F(s)\in\cF$ such that $F(s)=\E X^s$
for some positive random variable $X$ and $s$ in some interval.

A function $F(s)\in\cF$ 
is a meromorphic function of $s$ in the complex plane
$\bbC$. 
We can easily locate its poles (and zeros) precisely as follows.
Define $\nu(s)=\nu_F(s)$ for all $s\in\bbC$ by
\begin{equation}\label{nuf}
  \nu_F(s)=
  \begin{cases}
	m& \text{if $F$ has a pole of order $m$ at $s$},\\
	-m& \text{if $F$ has a zero of order $m$ at $s$},\\
	0& \text{otherwise (\ie, $F$ is regular at $s$ with $F(s)\neq0$)}.
  \end{cases}
\end{equation}
(For $F\in\cfm$ with $F(s)$ an extension of $\E X^s$, we also write $\nu_X$.)

Since $D^s$ has neither poles nor zeros, while $\gG(z)$ has a simple
pole at each $z\in\bbZleo\=\set{0,-1,-2,\dots}$ but no zeros,
\begin{equation}\label{nuf2}
  \begin{split}
  \nu_F(s)
&=\sumjj\ett{a_js+b_j\in\bbZleo}-\sumkk\ett{a'_ks+b'_k\in\bbZleo}
\\&
=\sumjj\Bigett{s\in\Bigset{-\frac{n+b_j}{a_j}:n\in\bbZgeo}}
-\sumkk\Bigett{s\in\Bigset{-\frac{n+b'_k}{a'_k}:n\in\bbZgeo}}.
  \end{split}
\end{equation}
Note that all poles and zeros of $F$ lie on the real axis.

We further define $\cfo\=\set{F\in\cF:\nu_F(0)=0}$, \ie, the set of
functions $F\in\cF$ that have neither a pole nor a zero at 0. By
\refR{RF0}, $\cfm\subset\cfo\subset\cF$. Note that $\cF$ is a group
under multiplication and that $\cfo$ is a subgroup.

For $F\in\cfm$ we defined $\rho_\pm$ in \refS{Sbasic}; 
the definition \eqref{rho+-} can be written
\begin{equation}\label{rhofm}
  \begin{aligned}
	\rho_+
&\=\min\set{x>0:\nu_F(x)>0} ,
\\
	\rho_-
&\=
\max\set{x<0:\nu_F(x)>0}.
  \end{aligned}
\end{equation}
For general $F\in\cF$ we define 
\begin{equation}\label{rhof}
  \begin{aligned}
	\rho_+
&\= \min\set{x\ge0:\nu_F(x)\neq0} ,
\\
	\rho_-
&\= \max\set{x\le0:\nu_F(x)\neq0},
  \end{aligned}
\end{equation}
and note that this is consistent with \eqref{rhofm} for $F\in\cfm$ 
by the fact (\refT{T1}) that such $F$ has no zeros in
$(\rho_-,\rho_+)$ and no pole at 0.

If all $a_j,a'_k,b_j,b'_k>0$, 
then $F(s)$ has no poles or zeros in
the right halfplane $\Re s\ge0$, since none of the factors in
\eqref{gammaF} has; thus $\rho_+=\infty$. 
Similarly, if  all 
$a_j,a'_k<0$ and
$b_j,b'_k>0$, 
then $F(s)$ has no poles or zeros in
the left halfplane $\Re s\le0$ and $\rho_-=-\infty$.
The converses do not hold, because the representation \eqref{gammaF} is
not unique and we may, \eg, add cancelling factors that separately
have poles at other places. However, we may always choose a
representation of the desired type; this is part of the following theorem.

\begin{theorem}
  \label{T+}
Let $F\in\cfo$ and 
let $\rho_\pm$ be as in \eqref{rhof}.
\begin{romenumerate}
  \item\label{T+:}
It is always possible to find a representation \eqref{gammaF} of\/ $F(s)$
with all $b_j,b'_k>0$.
  \item\label{T+:-}
If\/ $\rho_-=-\infty$, then $F(s)$ has a representation \eqref{gammaF}
with all 
$a_j,a'_k<0$ and
$b_j,b'_k>0$.
  \item\label{T+:+}
If\/ $\rho_+=\infty$, then $F(s)$ has a representation \eqref{gammaF}
with all $a_j,a'_k,b_j,\allowbreak b'_k>0$.
  \item\label{T+:+-}
If $\rho_+=\infty$ and  $\rho_-=-\infty$
(\ie, $F(s)$ is entire and without zeros),
then $F(s)=CD^s$ for some real constants $C$ and $D>0$. 
\end{romenumerate}

In particular, if $X$ is a random variable with moments \ogt, 
this applies to the meromorphic extension $F(s)$ of $\E X^s$.
In this case, 
it is not possible that both $\rho_+=\infty$ and  $\rho_-=-\infty$
(\ie, that\/ $\E X^s$ is entire), 
except in the trivial case $X=D$ \as{} for some $D>0$ (and thus $\E X^s=D^s$).
\end{theorem}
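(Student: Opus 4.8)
The plan is to prove the four parts in order, since each builds on the previous one. Throughout, the main tool is the functional equation $\gG(z+1)=z\gG(z)$, which lets me shift the parameters $b_j,b'_k$ by integers at the cost of introducing rational factors in $s$, and the observation from \refR{Rrational} that such rational factors (with real poles and zeros) can themselves be absorbed into products of Gamma quotients. So the first order of business is to reduce everything to manipulations of the factor data $(a_j,b_j)$ and $(a'_k,b'_k)$.

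For part \ref{T+:}, I would take any representation \eqref{gammaF} and, for each factor $\gG(a_js+b_j)$ with $b_j\le0$, pick an integer $n_j\ge0$ with $b_j+n_j a_j>0$ if $a_j>0$, or use $\gG(a_js+b_j)=\gG(a_js+b_j+1)/(a_js+b_j)$ repeatedly: each application raises $b_j$ by $1$ and multiplies by $1/(a_js+b_j)$, so after finitely many steps $b_j$ becomes positive, and the accumulated factor is a rational function of $s$ with real zeros and poles. (If $a_j<0$ I instead use $\gG(a_js+b_j)=(a_js+b_j)\gG(a_js+b_j-1)$ — wait, I need to be careful: lowering the argument. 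In fact the clean statement is that I can shift $b_j$ by $\pm1$ at the cost of a linear factor $(a_js+b_j)^{\pm1}$; since $b_j\notin\bbZleo$ is not forced, but even if $a_js+b_j$ vanishes at some real $s$ this is fine because rational factors with real zeros/poles are allowed.) Do the same for the denominator factors. Then invoke \refR{Rrational} to rewrite the resulting rational prefactor $Q(s)$ as a finite product $\prod_\ell \gG(s+c_\ell)/\gG(s+c'_\ell)$ with $c_\ell,c'_\ell\in\bbR$; finally apply the shifting trick once more to these new factors (all have $a=1>0$) to make their $b$-parameters positive. This yields a representation with all $b_j,b'_k>0$.

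For part \ref{T+:-}: start from a representation with all $b_j,b'_k>0$, as just produced. I claim that if $\rho_-=-\infty$ then every factor with $a_j>0$ in the numerator must be cancelled by a denominator factor, and vice versa — more precisely, the poles of $F$ on $(-\infty,0)$ come exactly from numerator factors $\gG(a_js+b_j)$ with $a_j>0$ (giving poles at $s=-(n+b_j)/a_j<0$ for $n\ge0$) and denominator factors $\gG(a'_ks+b'_k)$ with $a'_k<0$ (giving zeros there, hence poles of $F$ only if there aren't enough...). Hmm — the honest route is: by \eqref{nuf2}, $\nu_F(s)$ on the negative axis is a difference of indicator sums; $\rho_-=-\infty$ means $\nu_F(s)=0$ for all $s<0$. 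Now I use the asymptotics: as $s\to-\infty$ along the reals avoiding poles, the factors with $a_j>0$ contribute (via reflection $\gG(z)\gG(1-z)=\pi/\sin\pi z$, or Stirling) in a way that forces a matching. The cleanest argument: consider the "positive-$a$ part" $F_+(s)=CD^s\prod_{a_j>0}\gG(a_js+b_j)/\prod_{a'_k>0}\gG(a'_ks+b'_k)$ and the "negative-$a$ part" $F_-$, so $F=F_+F_-$. Then $F_-$ is analytic and zero-free on $\Re s\le 0$ (all its Gamma arguments have negative real part going to... no). I think the right move is to transform each negative-$a$ factor via the reflection formula into positive-$a$ Gamma factors times trig factors, i.e. $\gG(a_js+b_j)=\pi/\bigl(\sin(\pi(a_js+b_j))\,\gG(1-a_js-b_j)\bigr)$ with $-a_j>0$; the sine factors are entire of exponential type and, crucially, if $\rho_-=-\infty$ the poles/zeros these sines would introduce on the negative axis must all cancel globally — so after collecting, the trig factors must combine to a constant (a bounded entire function with no zeros that is also... ), giving a representation with all $a$'s of one sign; repeating on the other side / using part \ref{T+:} again cleans up the $b$'s. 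The cleanest: once all $b_j,b'_k>0$, a numerator factor with $a_j<0$ produces the pole $s=-b_j/a_j>0$, harmless for $\rho_-$; it produces NO poles on $s<0$. A denominator factor with $a'_k<0$ produces zeros at $s>0$ only. So actually with all $b>0$: poles of $F$ on $s<0$ come only from numerator $a_j>0$; zeros on $s<0$ come only from denominator $a'_k>0$. For $\rho_-=-\infty$ we need these to cancel perfectly, i.e. $\sum_{a_j>0}\ett{\cdots}=\sum_{a'_k>0}\ett{\cdots}$ as functions on $(-\infty,0)$. Comparing the arithmetic progressions of poles/zeros and their densities forces: the multiset $\{(a_j,b_j):a_j>0\}$ equals $\{(a'_k,b'_k):a'_k>0\}$ (after possibly shifting $b$'s by integers, which I may do freely). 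Hence those factors cancel and we are left with all $a$'s negative; a final pass through part \ref{T+:} restores $b_j,b'_k>0$. That proves \ref{T+:-}. Part \ref{T+:+} is identical with all signs reversed (or apply \ref{T+:-} to $F(-s)$, which lies in $\cfo$ and has $\rho_-$ and $\rho_+$ swapped, then substitute back).

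For part \ref{T+:+-}: if $\rho_+=\rho_-=\infty$ (in absolute value), apply \ref{T+:+} to get a representation with all $a_j,a'_k>0$, and separately note this same $F$ has $\rho_-=-\infty$ so by \ref{T+:-} it also has a representation with all $a_j,a'_k<0$. But by \refP{P1} the parameter $\gam'=\sum a_j-\sum a'_k$ and $\gam=\sum|a_j|-\sum|a'_k|$ are representation-independent; the first representation gives $\gam=\gam'$ and the second gives $\gam=-\gam'$, so $\gam=\gam'=0$, i.e. $\sum a_j=\sum a'_k$ with all positive. That alone does not kill the factors, so I push further: in the all-positive representation, $F$ has no poles on $\Re s\ge 0$ but $\gG$ has poles, so every numerator pole is cancelled by a denominator pole and every denominator zero... — rather, I argue that a nonconstant such $F$ with all $a_j,a'_k,b_j,b'_k>0$ must have a zero somewhere on the negative real axis (from some $\gG(a'_ks+b'_k)$ in the denominator) unless $K=0$, and a pole on the negative axis (from some numerator $\gG(a_js+b_j)$) unless $J=0$; since $\rho_-=-\infty$ forbids both, $J=K=0$ and $F(s)=CD^s$. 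Wait — but the negative-axis poles/zeros of this all-positive representation could cancel against each other. They cancel iff the numerator and denominator factor multisets coincide up to integer shifts of $b$; if they fully coincide then $J=K$ and everything cancels leaving $CD^s$; if they don't fully coincide after reduction, there is a genuine pole or zero on $(-\infty,0)$, contradicting $\rho_-=-\infty$. Either way $F(s)=CD^s$. The last sentence of the theorem is then immediate: $\E X^s$ entire means $\rho_\pm=\pm\infty$, so $\E X^s=CD^s$; but $\E X^0=1$ forces $C=1$, and $\E X^s=D^s$ is the moment sequence of the point mass at $D$, so $X=D$ a.s.\ by \refC{C1}.

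The main obstacle I anticipate is making rigorous the "the arithmetic progressions of poles and zeros force the factor multisets to agree up to integer shifts" step inside parts \ref{T+:-} and \ref{T+:+}. The subtlety is that a single $\gG$-factor contributes a whole one-sided arithmetic progression of simple poles (spacing $1/|a_j|$), and one must disentangle overlapping progressions with different spacings and offsets; the right tool is to look at the set of poles together with, say, their accumulated counting function, or to invoke \eqref{nuf2} directly and argue by comparing the functions $s\mapsto\nu_F(s)$ built from the two candidate representations. I would handle it by first reducing (via integer shifts of the $b$'s, which change nothing about the class) to the case where all offsets $b_j,b'_k\in(0,1]$, at which point two factors $\gG(a_js+b_j)$ and $\gG(a'_ks+b'_k)$ with $a_j,a'_k>0$ share a pole on the negative axis iff $(a_j,b_j)=(a'_k,b'_k)$, and then matching is a finite combinatorial cancellation. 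Everything else is routine bookkeeping with the functional equation.
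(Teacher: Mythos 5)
Your overall plan — reduce to $b_j,b'_k>0$ via the functional equation and rational prefactors (part (i)), split $F$ into its positive-$a$ and negative-$a$ parts and show the wrong-signed factors must cancel when $\rho_-=-\infty$ (part (ii)), deduce (iii) by $s\mapsto-s$, and combine (ii)+(iii) for (iv) — is exactly the paper's outline, and part (i) is essentially right despite the detour over the sign of $a_j$ (the functional equation $\gG(z)=\gG(z+1)/z$ raises $b_j$ regardless of the sign of $a_j$, so no case split is needed).

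However, there is a genuine gap in the cancellation step that drives (ii)--(iv). You claim that, once all offsets are normalized to $b_j,b'_k\in(0,1]$, two factors $\gG(a_js+b_j)$ and $\gG(a'_ks+b'_k)$ with $a_j,a'_k>0$ share a pole on the negative axis only if $(a_j,b_j)=(a'_k,b'_k)$, and hence that a perfectly cancelling pole/zero pattern forces the positive-$a$ numerator multiset to equal the positive-$a$ denominator multiset (up to integer shifts). This is false. Take the numerator multiset $\{(1,\tfrac12),(1,1)\}$ and the denominator multiset $\{(2,1)\}$: the poles of $\gG(s+\tfrac12)\gG(s+1)$ on $s<0$ are exactly the negative half-integers with multiplicity one, and so are the poles of $\gG(2s+1)$, yet the multisets are not equal and no integer shift of the $b$'s makes them so. Indeed by the duplication formula $\gG(s+\tfrac12)\gG(s+1)/\gG(2s+1)=\sqrt\pi\,2^{-2s}$, which is entire and zero-free without any term-by-term cancellation of the kind you describe. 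Your matching criterion, applied to this $F$, would (wrongly) conclude that the Gamma factors do not fully cancel and hence that $F$ has a pole or zero somewhere on $(-\infty,0)$.

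The paper closes this gap with \refL{L+}, whose two ingredients are exactly what your outline is missing. First it partitions the positive-$a$ indices into \emph{commensurability classes} (those with pairwise rational ratios of the $a$'s); factors from incommensurable classes can share only finitely many poles, so after shrinking the half-plane each class must be analytic and non-zero on its own. Second, within a single commensurability class it invokes the Gauss multiplication formula \eqref{Am} to rewrite every $\gG(a_js+b_j)$ as an exponential-times-constant times a product of factors $\gG(rs+u_i)$ with one common $r$; only \emph{after} this rewriting is it true that distinct factors $\gG(rs+u)$, $\gG(rs+u')$ with $u\neq u'$ in $(0,1]$ have disjoint pole sets, so the poles must cancel term by term and the Gamma part drops out, leaving $F_+(s)=e^{\ga s}Q(s)$ for a rational $Q$. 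Without the multiplication-formula reduction, the matching argument you propose — shifting $b$'s into $(0,1]$ and comparing arithmetic progressions — does not go through, and the gap you flagged at the end of your write-up remains open.
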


We begin by proving a lemma.

\begin{lemma}
  \label{L+}
Suppose that $a_j,a'_j,b_k,b'_k$ are real with $a_j,a'_k>0$ and that 
$F(s)\=\prodjj\gG(a_js+b_j)/\prodkk\gG(a_k's+b_k')$ is analytic and
non-zero in a half-plane $\Re s<B$ for some $B\in(-\infty,\infty)$.
Then $F(s)=e^{\ga s} Q(s)$ for some rational function $Q$ with real
poles and zeros and some $\ga\in \bbR$.
\end{lemma}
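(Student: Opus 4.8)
The plan is to work with the logarithmic derivative $h\=F'/F$. For $F$ of the stated form,
\[
h(s)=\sumjj a_j\psi(a_js+b_j)-\sumkk a'_k\psi(a'_ks+b'_k),\qquad \psi\=\gG'/\gG,
\]
a meromorphic function on $\bbC$ whose only singularities are the zeros and poles of $F$; at each such point $h$ has a simple pole with integer residue (positive at a zero of $F$, negative at a pole of $F$). Since every $a_j,a'_k>0$, each Gamma factor of $F$ has its poles on a sequence tending to $-\infty$, so $F$ can have only finitely many zeros or poles in $\Re s\ge B$; and because $F$ is analytic and zero‑free for $\Re s<B$, these finitely many points $r_1,\dots,r_m$, with signed orders $\mu_1,\dots,\mu_m\in\bbZ\setminus\set0$, are \emph{all} of them. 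Hence $R(s)h(s)$ is entire, where $R(s)\=\prod_{\ell=1}^m(s-r_\ell)$. (Equivalently one could first divide $F$ by a rational function with these poles and zeros, reducing to the case of an entire zero‑free $F$, using \refR{Rrational}; the two routes are the same.)

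Next I would establish the bound $|h(s)|=O(\log|s|)$ as $|s|\to\infty$, uniformly away from a fixed neighbourhood of $\set{r_1,\dots,r_m}$. Stirling's formula gives $\psi(z)=\log z+O(1/|z|)$ uniformly in $|\arg z|\le\pi-\delta$, and the reflection formula $\psi(z)=\psi(1-z)-\pi\cot\pi z$ then gives $\psi(z)=O(\log|z|)$ whenever $z$ stays a bounded distance from $\bbZ$. The key point is that for $\Re s<B$ the poles (with multiplicities) of the numerator Gamma factors and of the denominator Gamma factors of $F$ coincide — this is exactly what analyticity and zero‑freeness of $F$ there means — so whenever some $a_js+b_j$ is near $\bbZleo$ the resulting large term $a_j\psi(a_js+b_j)$ is matched and cancelled in $h$ by the corresponding denominator terms, leaving a remainder that is again $O(\log|s|)$. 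Carrying this out on a sequence of circles $|s|=\rho_n\to\infty$ chosen to avoid $r_1,\dots,r_m$, one gets $|R(s)h(s)|=O(|s|^{m}\log|s|)$ on those circles, so the entire function $Rh$ is a polynomial of degree $\le m$. Thus $h$ is a rational function, bounded at $\infty$, whose only poles are the simple poles at $r_1,\dots,r_m$ with residues $\mu_1,\dots,\mu_m$.

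Finally, Stirling once more gives, in the right half‑plane,
\[
h(s)=\Bigpar{\sumjj a_j-\sumkk a'_k}\log s+\Bigpar{\sumjj a_j\log a_j-\sumkk a'_k\log a'_k}+O(1/|s|).
\]
Since $h$ is rational and bounded at infinity, this forces $\sumjj a_j=\sumkk a'_k$ (otherwise $h$ is unbounded along the positive reals), and then $h(s)\to\ga$ as $|s|\to\infty$, where $\ga\=\sumjj a_j\log a_j-\sumkk a'_k\log a'_k\in\bbR$. A rational function bounded at infinity with limit $\ga$ and only simple poles at the real points $r_\ell$ with residues $\mu_\ell$ must equal $\ga+\sum_{\ell=1}^m\mu_\ell/(s-r_\ell)$; integrating, $\log F(s)=\ga s+\sum_\ell\mu_\ell\log(s-r_\ell)+\mathrm{const}$, so
\[
F(s)=e^{\ga s}Q(s),\qquad Q(s)\=e^{\mathrm{const}}\prod_{\ell=1}^m(s-r_\ell)^{\mu_\ell},
\]
a rational function whose poles and zeros are exactly the real points $r_\ell$; as all of $a_j,b_j,a'_k,b'_k$ are real, $F$ is real on $\bbR$, so the constant is real and $Q$ has real coefficients. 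This is the assertion of the lemma.

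The step I expect to be the main obstacle is the uniform estimate $|h(s)|=O(\log|s|)$ in the second paragraph: one has to check carefully that the singular parts of the individual digamma terms genuinely cancel in $h$ near every pole lying in $\Re s<B$ — which rests on the pole‑matching observation — and one has to treat with some care the case of several Gamma factors with coincident or nearly coincident poles, as well as the choice of the circles $|s|=\rho_n$ so that they never pass too close to the real poles of the individual factors. Once that bound is in place, the rest (Stirling asymptotics, polynomial‑growth argument, and the partial‑fraction identification of $h$) is routine.
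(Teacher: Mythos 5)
Your proof is correct and takes a genuinely different route from the paper's. The paper works algebraically: it partitions the coefficients $a_j,a'_k$ into classes under commensurability (so different classes have essentially disjoint pole sets, allowing each factor to be treated separately), then within each class uses Gauss's multiplication formula to rewrite every Gamma factor in terms of $\Gamma(rs+u)$ with a common $r>0$ and $u\in(0,1]$, and observes that the infinitely many poles in $\Re s<B$ can only cancel if the Gamma factors cancel identically, leaving $e^{\alpha s}Q(s)$. Your argument instead studies the logarithmic derivative $h=F'/F$: you observe $h$ has only finitely many (simple, integer-residue) poles, establish $|h(s)|=O(\log|s|)$ away from them via Stirling and the reflection formula, deduce by a Liouville/Cauchy-estimate argument that $h$ is rational of the form $\alpha+\sum_\ell\mu_\ell/(s-r_\ell)$, and integrate. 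This is more in the spirit of classical complex analysis and has the pleasant side effect of deriving the relation $\sum_j a_j=\sum_k a'_k$ and an explicit formula $\alpha=\sum_j a_j\log a_j-\sum_k a'_k\log a'_k$ along the way, neither of which the paper's argument surfaces. The price is the uniform $O(\log|s|)$ estimate, which you correctly flag as the delicate step: one must choose the circles $|s|=\rho_n$ so that $-\rho_n$ stays a fixed distance from the real lattices $\{-(n+b_j)/a_j\}$ and $\{-(n+b'_k)/a'_k\}$ (possible since these have bounded density), and then verify that wherever a digamma term does get large in $\Re s<B$, the pole-matching forced by regularity of $F$ cancels the singular part while the regular remainders are only $O(\log|s|)$. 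Once those details are carried out, the argument closes cleanly; as a sanity check, the degenerate case $m=0$ gives $h$ entire and $O(\log|s|)$, hence constant, yielding $F=Ce^{\alpha s}$, which is consistent with the statement.
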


\begin{proof}
Say that two non-zero real numbers $a$ and $a'$ are
  \emph{commensurable} if $a/a'\in\bbQ$; this is an equivalence
  relation on $\bbR^*\=\bbR\setminus\set0$.
(In algebraic language, the equivalence classes are the cosets of
  $\bbQ^*$ in $\bbR^*$.) 
The poles of $\gG(as+b)$ are regularly spaced with distances $1/|a|$.
Thus, if $a$ and $a'$ are incommensurable, and $b,b'$ are any real  numbers,
then $\gG(as+b)$ and $\gG(a's+b')$ have at most one common pole.

We divide the set $\set{a_j}_{j=1}^J\cup\set{a'_k}_{k=1}^K$ into
equivalence classes of commensurable numbers. This gives a
corresponding factorization $F(s)=\prodll F_\ell(s)$ where 
$L$ is the number of equivalence classes and each $F_\ell$ is of the
same form as $F$ but with all $a_j$ and $a'_k$ commensurable.
It follows that two different factors $F_{\ell_1}(s)$ and
$F_{\ell_2}(s)$ have at most a finite number of common zeros or poles;
hence, by decreasing $B$, we may assume that there are no such common
poles or zeros with $\Re s<B$. Hence, a pole or zero of a
factor $F_\ell(s)$ in $\Re s<B$ cannot be cancelled by another factor,
and thus such poles or zeros do not exist. Consequently, each factor
$F_\ell(s)$ satisfies the assumption of the lemma, and we may thus treat
each $F_\ell(s)$ separately. This means that we may assume that all $a_j$
and $a'_k$ are commensurable.

In this case, there is a positive real number $r$ such that all $a_j$
and $a'_k$ are (positive) integer multiples of $r$. Using Gauss's
multiplication formula \eqref{Am}, we may convert each factor
$\gG(a_js+b_j)$ or $\gG(a_k's+b_k')$ into a product of a constant, an
exponential factor $e^{\gb s}$, and a number of Gamma factors
$\gG(rs+u_i)$ with $u_i\in\bbR$. Using the functional relation
\eqref{As+1}, we may further assume that each $u_i\in(0,1]$, provided
  we also allow factors $(rs+u)^{\pm1}$ with $u\in\bbR$.
Collecting the factors, we see that
\begin{equation}
  \label{Fkill}
F(s)=e^{\ga s}Q(s)\frac{\prodjjx\gG(rs+u_j)}{\prodkkx\gG(rs+u_k')}
\end{equation}
for a constant $\ga$, a rational function $Q(s)$ with real zeros
  and poles, and some $u_j,u'_k\in(0,1]$. 
We may assume that $Q$ has no poles or zeros with $\Re s<B$ (by again
  decreasing $B$ if necessary). The factors $\gG(rs+u_j)$ and
  $\gG(rs+u_k')$ have no zeros but each has an infinite number of
  poles with $\Re s<B$, and two factors $\gG(rs+u_j)$ and
  $\gG(rs+u_k')$ have disjoint sets of poles unless $u_j=u_k'$.
Since the poles with $\Re s<B$ must cancel in \eqref{Fkill}, this
  shows that the Gamma factors must cancel each other completely, and
  thus \eqref{Fkill} reduces to $F(s)=e^{\ga s}Q(s)$.
\end{proof}

\begin{proof}[Proof of \refT{T+}]
\ref{T+:}:
Using $\gG(z)=\gG(z+1)/z$ repeatedly on any term with $b_j\le0$ or
$b'_k\le0$, we may write $F(s)$ as
\begin{equation}
  \label{erika}
C D^s Q(s)
\frac{\prodjj \Gamma(a_j s+\tilde b_j)}
{\prodkk \Gamma(\cx_k s+\tilde b_k') }, 
\end{equation}
with $\tilde b_j,\tilde b_k'>0$, where $Q(s)$ is a rational function
with only real poles and zeros;
$Q(s)=c\prod_i(s-r_i)/\prod_\ell(s-r'_\ell)$ for some real $r_i$ and
$r'_\ell$, $i=1,\dots,I$ and $\ell=1,\dots,L$, say; we may further
assume that $r_i\neq r'_\ell$ for all $i$ and $\ell$.
Since 0 is not a pole or zero of $F$, it is by \eqref{erika}
not a zero or pole of $Q$,
and thus  all $r_i,r'_\ell\neq0$. 

If $r<0$, then 
\begin{equation}
  s-r=s+|r|=\frac{\gG(s+|r|+1)}{\gG(s+|r|)},
\end{equation}
and if $r>0$, then 
\begin{equation}\label{s-r}
  s-r=-(r-s)=-\frac{\gG(-s+r+1)}{\gG(-s+r)},
\end{equation}
so $Q(s)$ may be written as product of quotients of Gamma factors of
the desired type (and a constant), and thus the result follows from
\eqref{erika}. 

\ref{T+:-}:
We use \ref{T+:} and may thus assume that \eqref{gammaF} holds with
$b_j,b'_k>0$. We factorize $F(s)$ as, with $d=\log D$, 
\begin{equation}
  \label{ffact}
F(s)=C e^{ds} F_+(s)F_-(s),
\end{equation}
where $F_+(s)$ contains all factors $\gG(a_js+b_j)$ and
$\gG(a'_ks+b'_k)$ with $a_j,a'_k>0$, and 
$F_-(s)$ contains the factors 
with $a_j,a'_k<0$,
so $F_-$ is of the desired form.

Since $b_j,b'_k>0$, $F_-(s)$ has no poles or zeros with $\Re s\le 0$.
By assumption, $\rho_-=-\infty$, and thus $F(s)$ is analytic and
non-zero in the half-plane $\Re s\le 0$. Consequently, by
\eqref{ffact}, $F_+(s)$ also has no poles or zeros in $\Re s\le0$.
By \refL{L+},
$F_+(s)=Q(s)e^{\ga s}$ for a rational function
$Q(s)=C_1\prod_j(s-u_j)/\prod_k(s-v_k)$ where $u_j$ and $v_k$ are real.
We may assume that $u_j\neq v_k$ for all $j,k$, and then each $u_j$ or
$v_k$ is a zero or pole of $F_+(s)$, and thus all $u_j,v_k>0$.
Using \eqref{s-r}, we thus can write $Q(s)$ in the desired form; hence
$F_+(s)$ and $F(s)$ can be so written.

\ref{T+:+}:
Follows from \ref{T+:-} by replacing  $F(s)$ by $F(-s)$.

\ref{T+:+-}:
In this case, $F(s)$ is an entire function without zeros. We use again
the factorization \eqref{ffact}. By the proof of \ref{T+:-},
$F_+(s)=e^{\ga s}Q(s)$ for a rational function $Q$.
By symmetry, as in the proof of \ref{T+:+},
similarly $F_-(s)=e^{\ga_- s}Q_-(s)$ for another rational function $Q_-$.
Thus,
\begin{equation}
  F(s)=C e^{(d+\ga+\ga_-)s}Q(s)Q_-(s).
\end{equation}
Hence, the rational function $Q(s)Q_-(s)$ is entire and has neither
poles nor zeros; consequently, it is constant.
Thus $F(s)=C_1 e^{d_1s}$ for some $C_1$ and $d_1$. 
\end{proof}

As a consequence we show that the function $\nu_F(s)$ describing the
poles and zeros of $F(s)$ essentially determines $F$, and thus the
distribution of $X$ and $Y$ satisfying \eqref{gamma} or \eqref{gammamgf}.

\begin{theorem}\label{Tunique}
  If $F_1,F_2\in\cF$ and $\nu_{F_1}=\nu_{F_2}$, then $F_2(s)=cD^s
  F_1(s)$ for some real constants $C\neq0$ and $D>0$.
\end{theorem}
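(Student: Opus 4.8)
The plan is to obtain this immediately from \refT{T+}\ref{T+:+-}. First I would set $G(s)\=F_2(s)/F_1(s)$. Recall that $\cF$ is a group under multiplication (and no element of $\cF$ is identically zero), so $G$ again belongs to $\cF$: writing out the representations \eqref{gammaF} of $F_1$ and $F_2$, the quotient is itself of the form \eqref{gammaF}, with the Gamma factors of $F_1$ moved from numerator to denominator (and vice versa), and with the two leading constants and the two scale factors $D^s$ combined.

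Next I would use that the divisor map $F\mapsto\nu_F$ is additive for products of meromorphic functions --- the order of a product at a point is the sum of the orders, counting pole orders negatively --- so that $\nu_G=\nu_{F_2}-\nu_{F_1}$. By hypothesis this is identically $0$; hence $G$ has neither a pole nor a zero anywhere in $\bbC$, \ie{} $G$ is entire and zero-free. In particular $\nu_G(0)=0$, so $G\in\cfo$, and by the interpretation of \eqref{rhof} we get $\rho_+=+\infty$ and $\rho_-=-\infty$ for $G$.

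Then I would apply \refT{T+}\ref{T+:+-} to $G$, which gives $G(s)=CD^s$ for some real $C$ and $D>0$; since $G$ is not identically zero, $C\neq0$. Unwinding the definition of $G$ yields $F_2(s)=CD^sF_1(s)$, as claimed.

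I do not expect a real obstacle here: the substantive step --- an entire, zero-free member of $\cF$ must equal $CD^s$ --- is exactly \refT{T+}\ref{T+:+-}, so this theorem is essentially a corollary of it. The only points that need (minor) care are checking that the ratio $F_2/F_1$ is genuinely an element of $\cF$ rather than merely a quotient of two such functions, and that it lies in $\cfo$ so that \refT{T+} is applicable; both follow at once from the group structure of $\cF$ and from $\nu_G\equiv0$.
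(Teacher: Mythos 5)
Your proof is correct and follows the paper's own argument exactly: form the quotient $G=F_2/F_1\in\cF$, observe $\nu_G\equiv0$ so $G\in\cfo$ with $\rho_\pm=\pm\infty$, and apply \refT{T+}\ref{T+:+-}. (Incidentally, you even write the order relation $\nu_G=\nu_{F_2}-\nu_{F_1}$ with the correct sign, which the published proof has as a harmless typo.)
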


\begin{proof}
  $F\=F_2/F_1\in\cF$ and $\nu_F(s)=\nu_{F_1}(s)-\nu_{F_2}(s)=0$ for
  every $s$. Hence, $F\in\cfo$ and $\rho_-=-\infty$, $\rho_+=\infty$;
  thus \refT{T+}\ref{T+:+-} shows that $F(s)=CD^s$.
\end{proof}

\begin{corollary}\label{Cunique}
  If\/ $X_1$ and $X_2$ are positive random variables with \mogt{} and 
$\nu_{X_1}=\nu_{X_2}$, then $X_2\eqd DX_1$ for some constant $D>0$.
In other words, 
a distribution with \mogt{} is
uniquely determined up to a scaling factor by the function $\nu_X(s)$.

Similarly, if $Y_1$ and $Y_2$ have  \mgf{s}  \ogt{} with the same
$\nu$, then $Y_2\eqd Y_1+d$ for some real constant $d$.
\end{corollary}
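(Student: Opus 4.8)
The plan is to read this off from \refT{Tunique} and \refC{C1}, with a short argument to normalize the stray constant. Write $F_i(s)$ for the meromorphic extension of $\E X_i^s$, $i=1,2$. By \refT{T1} each $F_i$ lies in $\cfm\subset\cF$ and $F_i(s)=\E X_i^s$ precisely on the strip $\rho_-^{(i)}<\Re s<\rho_+^{(i)}$; since $\nu_{X_1}=\nu_{F_1}=\nu_{F_2}=\nu_{X_2}$, the formula \eqref{rhofm} shows $\rho_\pm^{(1)}=\rho_\pm^{(2)}$, a common value I will call $\rho_\pm$. Now apply \refT{Tunique} to $F_1,F_2$: since $\nu_{F_1}=\nu_{F_2}$, there are real constants $c\neq0$ and $D>0$ with $F_2(s)=cD^sF_1(s)$. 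Evaluating at $s=0$ and using $F_1(0)=F_2(0)=1$ (\refR{RF0}) forces $c=1$, so $F_2(s)=D^sF_1(s)$ identically.

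Next I would transfer this from functions to distributions. For real $s\in(\rho_-,\rho_+)$ we have $\E(DX_1)^s=D^s\E X_1^s=D^sF_1(s)=F_2(s)=\E X_2^s$, so both $DX_1$ and $X_2$ have the same moment function on a non-empty interval, and \refC{C1} gives $X_2\eqd DX_1$; the ``determined up to a scaling factor'' reformulation is then just \refR{RD}. For the second assertion I would set $X_i\=e^{Y_i}$, so that $\E X_i^s=\E e^{sY_i}$ is of Gamma type with the very same defining function, whence $\nu_{X_1}=\nu_{Y_1}=\nu_{Y_2}=\nu_{X_2}$; the first part then yields $e^{Y_2}\eqd De^{Y_1}=e^{Y_1+\log D}$, and therefore $Y_2\eqd Y_1+d$ with $d=\log D$.

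The argument is essentially bookkeeping, so there is no serious obstacle; the only point that needs a moment's care is that the two moment formulas must agree on a genuinely non-empty interval before \refC{C1} can be invoked, which is exactly why I first record, via \refT{T1} and \eqref{rhofm}, that equal $\nu$-functions force the two strips of validity to coincide. After that, and after pinning $c=1$ from the normalization $F(0)=1$, everything closes immediately.
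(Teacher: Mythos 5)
Your proof is correct and follows essentially the same route as the paper: apply Theorem~\ref{Tunique} to the meromorphic extensions, pin down $c=1$ by evaluating at $s=0$, and invoke Corollary~\ref{C1}, with the $Y$-version obtained by exponentiating. The one extra step you include—checking via \eqref{rhofm} that the two strips of validity coincide—is harmless but not actually needed, since Corollary~\ref{C1} already allows the two moment formulas to hold on different non-empty intervals.
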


\begin{proof}
  Let $F_j(s)$ be the meromorphic extension of $\E X_j^s$,
  $j=0,1$. Then $F_2(s)=CD^sF_1(s)$ by \refT{Tunique}. Setting $s=0$
  we find $C=1$, and thus, for $s$ in some interval, $\E
  X_2^s=F_2(s)=D^sF_1(s)=\E(DX_1)^s$,
whence $X_2\eqd DX_1$ by \refC{C1}.

The final statement follows by considering $X_j\= e^{Y_j}$.
\end{proof}

\begin{remark}
  \label{Rcanonical}
The proofs of \refT{T+} and \refL{L+} yield an almost canonical way
of expressing $F(s)\in\cfo$ in the form \eqref{gammaF}. 
We start by making all $b_j,b_k'>0$ by \refT{T+}\ref{T+:}. We then
treat positive and negative $a_j$ and $a_k'$ separately; furthermore,
if these coefficients are not all commensurable (which they are in most
natural examples), we separate them into different equivalence classes of
commensurable coefficients. For each class we then rewrite the product
of the corresponding factors in the form \eqref{Fkill} for some real
$r$ (different for different classes, and chosen with $|r|$ as large as
possible). 
Note that  different factors in \eqref{Fkill}
have no common poles, so it is easy to locate all poles and zeros.
It only remains to take care of the rational part in
\eqref{Fkill}; in the examples we know, this is not a problem but we
have not studied this in general, and we do not know whether it is
possible to use this approach to define a unique canonical representation
\eqref{gammaF} for each $F$; we leave this as an open problem.
(Alternatively, it might be possible to define a canonical representation
including a rational factor.)

See Theorems \refand{T2}{TM} for examples of such 'canonical'
versions, but note that they not necessarily are the simplest by other
criteria. 
\end{remark}

 Let $N_+(x)\=\sum_{0<s\le x} \nu_F(s)$, $x>0$, and 
$N_-(x)\=\sum_{x\le  s<0} \nu_F(s)$, $x<0$. 
Thus $N_+(x)$ is the total number of poles minus the total number of
zeros (with multiplicities) in the interval $(0,x]$, and similarly for
$N_-(x)$ and the interval $[x,0)$ on the negative half-axis.
The following proposition can be interpreted as giving the density
of poles minus zeros on the positive or negative half-axis. 

\begin{proposition}\label{PN}
  Let $N_+(x)\=\sum_{0<s\le x} \nu_F(s)$, $x>0$, and 
$N_-(x)\=\sum_{x\le  s<0} \nu_F(s)$, $x<0$. Then 
$N_+(x)/x\to \half(\gam-\gam')$ as $x\topoo$
and
$N_-(x)/|x|\to \half(\gam+\gam')$ as $x\tomoo$.
\end{proposition}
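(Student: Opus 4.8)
The plan is to count the poles and zeros of $F(s)$ on the positive half-axis directly from the explicit formula \eqref{nuf2}. Recall that
\begin{equation*}
\nu_F(s)=\sumjj\ett{a_js+b_j\in\bbZleo}-\sumkk\ett{a'_ks+b'_k\in\bbZleo},
\end{equation*}
so a single Gamma factor $\gG(a_js+b_j)$ contributes a pole at each point $s=-(n+b_j)/a_j$ with $n\in\bbZgeo$. For $x>0$, summing $\nu_F$ over $(0,x]$ therefore amounts to counting, for each $j$, the number of nonnegative integers $n$ with $0<-(n+b_j)/a_j\le x$, and subtracting the analogous count for each $k$. The crucial observation is that a factor with $a_j>0$ produces poles only at negative values of $s$ (since $n+b_j\to+\infty$), hence contributes nothing to $N_+(x)$; only factors with $a_j<0$ matter on the positive half-axis. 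For such a factor, $-(n+b_j)/a_j=(n+b_j)/|a_j|$, and this lies in $(0,x]$ for roughly $|a_j|x$ values of $n$; more precisely the count is $|a_j|x+O(1)$. Summing, $N_+(x)=\bigpar{\sum_{j:a_j<0}|a_j|-\sum_{k:a'_k<0}|a'_k|}x+O(1)$, so $N_+(x)/x\to\sum_{j:a_j<0}|a_j|-\sum_{k:a'_k<0}|a'_k|$, which equals $\half(\gam-\gam')$ by \refL{L1}.

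The statement for $N_-(x)$ is entirely symmetric: the roles of positive and negative $a_j$ are exchanged (now only factors with $a_j>0$ contribute poles in $(-\infty,0)$), so $N_-(x)/|x|\to\sum_{j:a_j>0}a_j-\sum_{k:a'_k>0}a'_k=\half(\gam+\gam')$, again by \refL{L1}. Alternatively one can deduce the negative-axis statement from the positive-axis one applied to $F(-s)$, using \refR{R+-} (replacing $F(s)$ by $F(-s)$ changes the signs of all $a_j,a'_k$, hence sends $\gam'$ to $-\gam'$ while fixing $\gam$, and sends $N_+$ to $N_-$).

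The only point requiring a little care is the error analysis: one must check that the $O(1)$ errors really are bounded uniformly and do not accumulate. Here there are only finitely many factors (fixed $J$ and $K$), and for each factor the number of integers $n\ge0$ with $(n+b_j)/|a_j|\in(0,x]$ differs from $|a_j|x$ by at most a constant depending only on $b_j$ and $a_j$ (essentially $|b_j|/|a_j|+1$), so the total error is $O(1)$ as claimed. I do not expect any genuine obstacle here; the whole argument is elementary lattice-point counting once \eqref{nuf2} is in hand, and the appeal to \refL{L1} converts the "sum over factors of one sign" into the stated combination of $\gam$ and $\gam'$.
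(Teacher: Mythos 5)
Your argument is essentially the paper's proof: both count poles directly from the formula \eqref{nuf2} for $\nu_F$, observe that factors with one sign of $a_j$ contribute a linear number of poles on the relevant half-axis while the others contribute only boundedly, invoke \refL{L1} to identify the resulting coefficient as $\half(\gam\mp\gam')$, and dispose of the negative half-axis by the $F(s)\mapsto F(-s)$ symmetry.

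One small inaccuracy worth flagging: your claim that a factor $\gG(a_js+b_j)$ with $a_j>0$ produces poles only at negative $s$ is correct only when $b_j>0$. If $b_j<0$, the poles at $s=-(n+b_j)/a_j$ for the finitely many $n\in\bbZgeo$ with $n<-b_j$ lie on the positive axis. The paper sidesteps this by first invoking \refT{T+}\ref{T+:} to pass to a representation with all $b_j,b'_k>0$, after which the contribution from $a_j>0$ factors to $N_+(x)$ is literally zero. You can instead keep an arbitrary representation and simply note that such a factor contributes at most $\max(0,\lceil -b_j\rceil)=O(1)$ poles to $N_+(x)$, which is absorbed by your $O(1)$ error and leaves the limit unchanged. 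Either fix is immediate, and the rest of your argument is correct.
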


\begin{proof}
  Use, for simplicity, a representation as in
  \refT{T+}\ref{T+:}. Then, using \eqref{nuf2}, 
the terms with $a_j>0$ and $a_k'>0$ give
no contributions to
  $\nu_F(s)$ and $N_+(x)$ for $s>0$ and $x>0$,
while each $a_j<0$ gives a contribution $|a_j|x+O(1)$ to $N_+(x)$
  (poles regularly spaced at distances $1/|a_j|$),
and similarly each $a_k'<0$ gives a contribution $-|a_k'|x+O(1)$ to
  $N_+(x)$.
Consequently, for $x>0$, using \refL{L1},
\begin{equation*}
  N_+(x)=x\Bigpar{\sum_{j:a_j<0} |a_j| - \sum_{k:a_k'<0} |a_k'|} +O(1)
=x\half(\gam-\gam')+O(1),	
\end{equation*}
and the result as $x\to\infty$ follows. The result as $x\tomoo$
follows similarly, or by replacing $F(s)$ by $F(-s)$.
\end{proof}

\section{Asymptotics of moments or \mgf}\label{Sasymp}

In this section we assume that $X>0$ and $Y=\log X$ are random
variables such that \eqref{gamma}--\eqref{gammachf} hold (for
$\rho_-<\Re s<\rho_+$ and $t\in\bbR$), \ie
\begin{equation}
  \label{gammaa}
\E X^s 
= \E e^{sY}
= F(s) 
=C D^s
\frac{\prodjj \Gamma(a_j s+b_j)}
{\prodkk \Gamma(\cx_k s+\dx_k) } ,
\end{equation}
and we write as above $D=e^d$. Recall the definitions \eqref{gam}--\eqref{CC1}.

We begin with asymptotics of $F$ along the imaginary axis and close to it.

\begin{theorem}
  \label{TIM}
As $t\to\pm\infty$,
\begin{equation}\label{tim1}
  |\E e^{\ii tY}|=|F(\ii t)| \sim C_1 |t|^\gd e^{-\frac\pi2\gam|t|}.
\end{equation}
Moreover, for any fixed real $\gs$, and uniformly for $\gs$ in any
bounded set, 
\begin{equation}\label{tim2}
  |F(\gs+\ii t)|\sim e^{\gk\gs}|t|^{\gamma'\gs} |F(\ii t)|
\sim C_1 e^{\gk\gs}|t|^{\gd+\gamma'\gs} e^{-\frac\pi2\gam|t|}.
\end{equation}
\end{theorem}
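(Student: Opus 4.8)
The plan is to read the asymptotics straight off Stirling's formula applied to each Gamma factor in \eqref{gammaa} along a vertical line $\Re s=\gs$. The analytic input is the complex form of Stirling's formula (see \refApp{Appa}): for $z=x+\ii y$ with $x$ confined to a bounded set and $|y|\to\infty$, $\log\Gamma(z)=(z-\tfrac12)\log z-z+\tfrac12\log(2\pi)+O(1/|y|)$, valid away from the negative real axis; taking real parts with $\log z=\log|y|+\ii\tfrac\pi2\operatorname{sgn}(y)+O(1/|y|)$ yields
\begin{equation*}
  |\Gamma(x+\ii y)| \sim \ppi\,|y|^{x-1/2}e^{-\frac\pi2|y|},
  \qquad |y|\to\infty,
\end{equation*}
uniformly for $x$ in a bounded set (for $y<0$ one uses $|\Gamma(\bar z)|=|\Gamma(z)|$ to reduce to $y>0$).

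First I would put $s=\gs+\ii t$, so that each numerator factor is $\Gamma\bigpar{(a_j\gs+b_j)+\ii a_jt}$ and each denominator factor $\Gamma\bigpar{(a'_k\gs+b'_k)+\ii a'_kt}$. Since $a_j,a'_k\neq0$, the imaginary parts $a_jt$ and $a'_kt$ have modulus $|a_j||t|,\ |a'_k||t|\to\infty$ as $t\to\pm\infty$, while the real parts stay bounded as long as $\gs$ does; hence the Stirling estimate above applies to every factor, with relative error $1+O(1/|t|)$, uniformly for $\gs$ in a bounded set. Multiplying the estimates and using $|D^s|=e^{d\gs}$ gives
\begin{equation*}
  |F(\gs+\ii t)| \sim |C|\,e^{d\gs}(2\pi)^{(J-K)/2}
  \frac{\prod_j\bigpar{|a_j|\,|t|}^{a_j\gs+b_j-1/2}e^{-\frac\pi2|a_j|\,|t|}}
       {\prod_k\bigpar{|a'_k|\,|t|}^{a'_k\gs+b'_k-1/2}e^{-\frac\pi2|a'_k|\,|t|}},
\end{equation*}
uniformly for bounded $\gs$.

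The next step is pure bookkeeping: collect the three kinds of factors and match them to \eqref{gam}--\eqref{CC1}. The power of $e^{-\frac\pi2|t|}$ is $\sum_j|a_j|-\sum_k|a'_k|=\gam$; the power of $|t|$ is $\sum_j(a_j\gs+b_j-\tfrac12)-\sum_k(a'_k\gs+b'_k-\tfrac12)=\gamma'\gs+\gd$; the $\gs$-dependent part of the $|a_j|,|a'_k|$ factors combines with $e^{d\gs}$ into $\exp\bigpar{\gs(\sum_ja_j\log|a_j|-\sum_ka'_k\log|a'_k|+d)}=e^{\gk\gs}$; and the remaining $t$-free, $\gs$-free constant is $|C|(2\pi)^{(J-K)/2}\prod_j|a_j|^{b_j-1/2}/\prod_k|a'_k|^{b'_k-1/2}=|C_1|$. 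This gives $|F(\gs+\ii t)|\sim|C_1|\,e^{\gk\gs}|t|^{\gd+\gamma'\gs}e^{-\frac\pi2\gam|t|}$; since the left side is $\ge0$ and $|t|^{\gd+\gamma'\gs}e^{-\frac\pi2\gam|t|}>0$, necessarily $|C_1|=C_1$, i.e.\ $C_1>0$ here (one may see $C_1>0$ directly by first reducing, via \refT{T+}\ref{T+:}, to a representation with all $b_j,b'_k>0$, where $C=\prod_k\Gamma(b'_k)/\prod_j\Gamma(b_j)>0$). Specializing to $\gs=0$ gives \eqref{tim1}, and dividing the general estimate by the $\gs=0$ one gives the first relation in \eqref{tim2}.

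I do not expect a serious obstacle; the argument is Stirling plus bookkeeping. The two points that need care are (i) that the complex Stirling expansion genuinely applies at every factor — which it does precisely because each Gamma argument has imaginary part of modulus tending to $\infty$, keeping it in a closed sub-sector of $\set{|\arg z|<\pi}$ where the expansion is valid — and (ii) the uniformity in $\gs$, which is inherited directly from the uniformity of the Stirling estimate in $x$, since $x=a_j\gs+b_j$ (resp.\ $a'_k\gs+b'_k$) stays bounded whenever $\gs$ does; the finitely many factors $1+O(1/|t|)$ then multiply to $1+O(1/|t|)$. A minor point worth flagging is the sign of $C_1$, handled above.
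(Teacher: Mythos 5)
Your argument is correct and follows the same essential route as the paper — Stirling's formula applied factor-by-factor to \eqref{gammaa}, followed by bookkeeping against \eqref{gam}--\eqref{CC1}. The only structural difference is organizational: the paper first proves \eqref{tim1} (the $\gs=0$ case) from Stirling and the difference estimate $\log\gG(z+c)-\log\gG(z)=c\log z+\Oqwa z$, then derives \eqref{tim2} as a ratio $F(\gs+\ii t)/F(\ii t)$ using the same difference estimate; you instead compute $|F(\gs+\ii t)|$ for general $\gs$ in one pass and specialize to $\gs=0$. Both are Stirling plus algebra, and your version is arguably more direct; the paper's two-step route is slightly more economical since multiplying the ratio estimate avoids re-deriving the constant $C_1$. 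Your observation that $C_1>0$ needs a word of justification (either via positivity of $|F(\ii t)|$ or by reducing to a representation with all $b_j,b'_k>0$) is a small point the paper glosses over, and it is correctly handled.
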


\begin{proof}
It is an easy, and well-known, consequence 
of Stirling's formula, see \eg{} \eqref{Astir'}, 
that for any
complex constant $c$ and all complex $z$ in a sector
$\abs{\arg  z}<\pi-\eps$ (where $\eps>0$) with $|z|$ large enough, 
for example $|z|\ge 2|c|/\sin\eps$,
\begin{equation}
  \label{stir2}
\log\gG(z+c)-\log\gG(z)=c\log z+\Oqwa z,
\end{equation}
uniformly for $c$ in any bounded set and such $|z|$.

If $a>0$ and $b\in\bbR$, we thus have for real $t\topoo$, taking
$z=\ii at$ in \eqref{stir2} and in Stirling's formula \eqref{Astir},
\begin{equation*}
  \begin{split}
	\log\gG(a\ii t+b)
&= 	\log\gG(\ii a t)+b\log(\ii a t) +\Oqw t 
\\
&= 	(\ii a t+b-\half)\log(\ii a t)-\ii a t+\lpi +\Oqw t 
\\
&= 	\bigpar{\ii a t+b-\half}\bigpar{\log(at)+\ii\pi/2}-\ii a t+\lpi +\Oqw t .
  \end{split}
\end{equation*}
Taking the real part, we find
\begin{equation*}
  \begin{split}
	\log|\gG(a\ii t+b)|
&=\Re (\log\gG(a\ii t+b))
\\&
= 	- \frac\pi2 a t+ (b-\half)\log(a t)+\lpi +\Oqw t .
  \end{split}
\end{equation*}
Consequently, for $a>0$,
\begin{equation}
|\gG(a\ii t+b)|
\sim \ppi\, a^{b-1/2} t^{b-1/2} e^{- \frac\pi2 a t}  ,
\qquad  t\topoo.
\end{equation}
For general real $a$ and $t$ we thus have (by $\gG(\bar z)=\overline{\gG(z)}$)
\begin{equation*}
  |\gG(a\ii t+b)| 
=\bigabs{\gG(|a|\ii|t|+b)}
\sim \ppi\, |a|^{b-1/2} |t|^{b-1/2} e^{- \frac\pi2 |a| |t|}  ,
\qquad  t\topmoo.
\end{equation*}
The result \eqref{tim1} follows by multiplying the various factors in
$F(\ii t)$ in 
\eqref{gammaa}, noting that $|D^{\ii t}|=1$.

For \eqref{tim2} we note that \eqref{stir2} implies
\begin{equation*}
  \begin{split}
  \log|\gG(a(\gs+\ii t)+b)| -   \log|\gG(a\ii t+b)|
&=\Re\bigpar{a\gs\log(a\ii t+b)+\Oqw t}
\\
&=\gs\bigpar{a\log|a|+a\log|t|+\Oqw t},
  \end{split}
\end{equation*}
and the result follows 
by multiplying the various factors in
$F(\gs+\ii t)/F(\ii t)$. 
(Alternatively, at least for fixed $\gs$, we may apply \eqref{tim1}
with $b_j$ replaced by $b_j+\gs a_j$,  $b'_k$ replaced by $b'_k+\gs a'_k$
and $C$ replaced by $C D^\gs=C e^{d\gs}$; note that the proof holds
for any function $F$ of this type, without assuming the existence of
random variables $X$ and $Y$.)
\end{proof}

\begin{proof}[Proof of \refP{P1}]
  By \eqref{tim1}, the values of $F(\ii t)$ determine $\gamma$, $\gd$
  and $C_1$. Further, choosing any fixed $\gs>0$ in \eqref{tim2}, we
  see that $F$ determines $\gamma'$ and $\gk$ too.
\end{proof}

\begin{corollary}
  \label{CIM}
We have
$\gam\ge0$. Further, if\/ $\gam=0$, then $\gd\le0$.
\end{corollary}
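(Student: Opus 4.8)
The plan is to combine the asymptotic formula \eqref{tim1} of \refT{TIM} with the trivial bound $|\E e^{\ii tY}|\le1$, which holds because $Y$ is real, so $|e^{\ii tY}|=1$ pointwise and $|\E e^{\ii tY}|\le\E|e^{\ii tY}|=1$.

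First I would record that $C_1>0$. The meromorphic function $F$ is not identically zero (indeed $F(0)=\E X^0=1$, see \refR{RC}), so the constant $C$, and hence $C_1$ as given by \eqref{CC1}, is nonzero; since $|F(\ii t)|\ge0$ while, by \eqref{tim1}, $|F(\ii t)|\sim C_1|t|^\gd e^{-\frac\pi2\gam|t|}$ with the factor $|t|^\gd e^{-\frac\pi2\gam|t|}>0$, necessarily $C_1>0$. (Alternatively, by \refT{T1} we have $F(\ii t)\ne0$ for every real $t$, since $\ii t$ lies in the strip $\rho_-<\Re s<\rho_+$.)

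Then suppose, for contradiction, that $\gam<0$. As $t\to+\infty$ we have $e^{-\frac\pi2\gam|t|}=e^{\frac\pi2|\gam|\,|t|}\to\infty$ exponentially, hence $C_1|t|^\gd e^{-\frac\pi2\gam|t|}\to\infty$, and so by \eqref{tim1} $|\E e^{\ii tY}|=|F(\ii t)|\to\infty$, contradicting $|\E e^{\ii tY}|\le1$. Therefore $\gam\ge0$. If in addition $\gam=0$, then \eqref{tim1} becomes $|F(\ii t)|\sim C_1|t|^\gd$; if $\gd>0$ this again tends to $\infty$, the same contradiction, so $\gd\le0$.

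I do not expect any real obstacle here: the argument simply reads off the growth rate in \eqref{tim1} against the boundedness of a characteristic function. The only point that deserves to be stated is that the hypothesis that $X$ (equivalently $Y$) is a genuine random variable — rather than merely that $F\in\cF$ — enters exactly once, through $|\E e^{\ii tY}|\le1$; for a general $F\in\cF$ one should not expect $\gam\ge0$.
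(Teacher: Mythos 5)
Your proof is correct and takes essentially the same route as the paper's one-line argument: let $t\to\infty$ in \eqref{tim1} and compare with $|\E e^{\ii tY}|\le1$. The extra care you take to justify $C_1>0$ is a reasonable elaboration but not a departure.
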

\begin{proof}
  By letting $t\tooo$ in \eqref{tim1}, since $|\E e^{\ii tY}|\le1$.
\end{proof}

\begin{remark}
  If $\gam=0$ and $\gam'\neq0$, then \eqref{tim2} implies a better
  bound for $\gd$. However, we do not know any such example, and we
  leave it as an open problem whether there exists any $X$ with
  $\gam=0$ and $\gam'\neq0$.
\end{remark}

\begin{theorem}
  \label{TF}
If\/ $\gam>0$, then $X$ and $Y$ are absolutely continuous, with
continuous and infinitely differentiable density functions $f_X(x)$ on
$\ooo$ and $f_Y(y)$ on $\oooo$ given by
\begin{align}
  \label{fx}
f_X(x)&=\frac1{2\pi\ii}\intgs x^{-s-1}F(s)\dd s,\\
  \label{fy}
f_Y(y)&=\frac1{2\pi\ii}\intgs e^{-ys}F(s)\dd s,
\end{align}
for any $\gs\in(\rho_-,\rho_+)$.
\end{theorem}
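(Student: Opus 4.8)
The plan is to extract everything from the exponential decay of $F$ along and near the imaginary axis recorded in \refT{TIM}, and then invoke classical Fourier and Mellin inversion. The hypothesis $\gam>0$ enters precisely here: by \eqref{tim1}, $\gf(t)\=\E e^{\ii tY}=F(\ii t)$ satisfies $|\gf(t)|\sim C_1|t|^\gd e^{-\frac\pi2\gam|t|}$ as $t\topmoo$, so when $\gam>0$ not only is $\gf$ integrable on $\bbR$ but so is $t\mapsto|t|^n\gf(t)$ for every $n\ge0$; and by \eqref{tim2} the same holds with $\gf(t)$ replaced by $F(\gs+\ii t)$, uniformly for $\gs$ in any compact subinterval of $(\rho_-,\rho_+)$. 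An integrable characteristic function forces $Y$ to be absolutely continuous with bounded, uniformly continuous density given by the inversion integral, so
\begin{equation*}
f_Y(y)=\frac1{2\pi}\intoooo e^{-\ii ty}\gf(t)\dd t
      =\frac1{2\pi}\intoooo e^{-\ii ty}F(\ii t)\dd t
      =\frac1{2\pi\ii}\int_{-\ii\infty}^{\ii\infty}e^{-sy}F(s)\dd s,
\end{equation*}
the last equality being the substitution $s=\ii t$; this is \eqref{fy} for $\gs=0$.

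Next I would move the line of integration. By \refT{T1}, $F$ is analytic throughout the strip $\rho_-<\Re s<\rho_+$, and $0$ always lies in this strip, so for any prescribed $\gs\in(\rho_-,\rho_+)$ the closed interval $I$ with endpoints $0$ and $\gs$ is a compact subset of $(\rho_-,\rho_+)$; by \eqref{tim2}, $|F(\gs'+\ii t)|\to0$ exponentially as $t\topmoo$, uniformly for $\gs'\in I$. Applying Cauchy's theorem to the rectangle with corners $\pm\ii T$, $\gs\pm\ii T$ and letting $T\tooo$ — the two horizontal edges tend to $0$ by this uniform bound — shows that $\frac1{2\pi\ii}\intgs e^{-sy}F(s)\dd s$ does not depend on $\gs\in(\rho_-,\rho_+)$ and hence equals $f_Y(y)$; this gives \eqref{fy} in general. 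For smoothness, fix $\gs$ and write $f_Y(y)=\frac1{2\pi}\intoooo e^{-(\gs+\ii t)y}F(\gs+\ii t)\dd t$; since $|(\gs+\ii t)^nF(\gs+\ii t)|$ is integrable for every $n$, one may differentiate under the integral sign arbitrarily often, so $f_Y\in C^\infty(\oooo)$.

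Finally I would transfer the conclusion to $X=e^Y$. As $y\mapsto e^y$ is a smooth increasing bijection $\oooo\to\ooo$, $X$ is absolutely continuous with $f_X(x)=f_Y(\log x)/x$ for $x>0$, which is continuous and infinitely differentiable on $\ooo$ because $f_Y$ is. Substituting $y=\log x$ in \eqref{fy} and absorbing the factor $1/x$ into $e^{-s\log x}=x^{-s}$ to get $x^{-s-1}$ under the integral yields \eqref{fx}. I do not expect a genuine obstacle: the only real work is the uniform exponential estimate needed to legitimise both the contour shift and the differentiation under the integral, and that is exactly what \eqref{tim1}--\eqref{tim2} supply; the remaining steps are routine. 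The strict inequality $\gam>0$ is essential — if $\gam=0$ the decay in \eqref{tim1} is only polynomial (with $\gd\le0$ by \refC{CIM}), integrability may fail, and the $C^\infty$ argument collapses.
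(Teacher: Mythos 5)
Your proof is correct and follows essentially the same route as the paper: Fourier inversion of the integrable characteristic function $F(\ii t)$ to get $f_Y$, the estimate \eqref{tim2} to justify shifting the line of integration by Cauchy's theorem, integrability of $|t|^n F(\gs+\ii t)$ to differentiate under the integral, and the change of variables $x=e^y$ to pass to $f_X$. The only cosmetic difference is the order in which you treat the contour shift and the smoothness, which is immaterial.
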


\begin{proof}
  By \refT{TIM}, the \chf{} $\E e^{\ii tY}=F(\ii t)$ is integrable,
  and thus $Y$ has a continuous density $f_Y$ obtained by Fourier
  inversion:
  \begin{equation}\label{fy0}
f_Y(y)=
\frac1{2\pi}\intoooo e^{-\ii ty}F(\ii t)\dd t
=\frac1{2\pi\ii}\int_{-\ii\infty}^{\ii\infty} e^{-sy}F(s)\dd s.
  \end{equation}
Since $Y=\log X$, $X$ also is absolutely continuous, with the density function
  \begin{equation}\label{fx0}
	\begin{split}
f_X(x)=
\frac1x f_Y(\log x)
=\frac1{2\pi\ii}\int_{-\ii\infty}^{\ii\infty} x^{-s-1}F(s)\dd s.	  
	\end{split}
  \end{equation}
(Alternatively and equivalently, $F(s)$ is the Mellin transform of
  $f_X$, and this is the Mellin inversion formula.)

Since \refT{TIM} further implies that $|t|^NF(\ii t)$ is integrable
for every $N\ge0$, $f_Y$ and $f_X$ are infinitely differentiable and
we may differentiate \eqref{fx0} and \eqref{fy0} under the integral sign an
arbitrary number of times.

The integrands in \eqref{fx0} and \eqref{fy0} are analytic in $s$ for
$\rho_-<\Re s<\rho_+$, and thus the estimate \eqref{tim2} implies that
we can move the line of integration to any line $\Re s=\gs$ with
$\rho_-<\gs<\rho_+$. 
\end{proof}

\begin{remark}
  For $X$, we consider the density only for $x>0$, and `infinitely
  differentiable' here means on $\ooo$. Continuity and
  differentiability of $f_X$ at
  0 will be considered in \refT{T0}.
\end{remark}

\begin{remark}\label{R00}
  In the case $\gam=0$, the same argument shows that if $\gd<-1$, then
  $X$ and $Y$ have continuous density functions, which have at least
  $\ceil{|\gd|}-2$ continuous derivatives. However, 
  \refE{Ebeta}, where $\gd=-\gb$, shows that we in general do not
  have more derivatives. Similarly, Examples \refand{EU}{Ebeta} show
  that we do not necessarily have continuous density functions for
  $\gam=0$ and $-1\le\gd\le0$. \refE{Ecounter} gives an example with
  $\gam=\gd=0$ where the distribution is mixed with a point mass
  besides the absolutely continuous part.

Note that, by \eqref{tim1}, $\gam=\gd=0$ if and only if $|\E X^{\ii
  t}|=|\E e^{\ii t Y}|$ has 
a non-zero limit as $t\to\pm\infty$; by the
Riemann--Lebesgue lemma, this implies that $Y$ and $X$ do not have
  absolutely continuous distributions.
\end{remark}

We next consider asymptotics of $F$ along the real axis, when
possible. 

\begin{theorem}
  \label{TRE}
  \begin{thmenumerate}
\item
If\/ $\rho_+=\infty$, then for real $s\to+\infty$,
\begin{equation}
  \E X^s = \E e^{sY} = F(s)
\sim C_1 s^\gd e^{\gam s \log s + (\gk-\gamma)s}.
\end{equation}
\item
If\/ $\rho_-=\infty$, then for real $s\to-\infty$,
\begin{equation}
  \E X^s = \E e^{sY} = F(s)
\sim C_1 |s|^\gd e^{\gam |s| \log |s| + (\gk+\gamma)s}.
\end{equation}
  \end{thmenumerate}
\end{theorem}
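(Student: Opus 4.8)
The plan is to choose a representation of $F$ adapted to the direction in which $s$ runs to infinity and then push Stirling's formula through it term by term. Consider first (i), so $\rho_+=\infty$. Since $F=\E X^s\in\cfm\subset\cfo$, \refT{T+}\ref{T+:+} supplies a representation \eqref{gammaa} with $a_j,a'_k,b_j,b'_k>0$ for all $j,k$; by \refP{P1} this changes none of $\gam,\gamma',\gd,\gk,C_1$, and in such a representation every absolute value in \eqref{gam}--\eqref{CC1} disappears, so $\gam=\gamma'=\sum_j a_j-\sum_k a'_k$ and $\gk=d+\sum_j a_j\log a_j-\sum_k a'_k\log a'_k$. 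The point is that as $s\to+\infty$ along the reals every Gamma argument $a_js+b_j$, $a'_ks+b'_k$ tends to $+\infty$, so Stirling's formula $\log\gG(z)=(z-\half)\log z-z+\lpi+\Oqw z$ applies to each factor; using also $\log(a_js+b_j)=\log a_j+\log s+\Oqw s$ (the $b_j$ produced by expanding $(a_js+b_j)\log(1+b_j/a_js)$ cancelling the $-b_j$ from the $-z$ term), one gets
\begin{equation*}
  \log\gG(a_js+b_j)=a_js\log s+a_js\log a_j+(b_j-\half)(\log a_j+\log s)-a_js+\lpi+\Oqw s ,
\end{equation*}
and likewise for the denominator factors.

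Summing over $j$, subtracting the sum over $k$, and adding $\log C+ds$, I would read off the coefficients: that of $s\log s$ is $\sum_j a_j-\sum_k a'_k=\gam$; that of $s$ is $d+\sum_j a_j\log a_j-\sum_k a'_k\log a'_k-(\sum_j a_j-\sum_k a'_k)=\gk-\gam$; that of $\log s$ is $\sum_j(b_j-\half)-\sum_k(b'_k-\half)=\gd$; and the remaining constant is $\log C+(J-K)\lpi+\sum_j(b_j-\half)\log a_j-\sum_k(b'_k-\half)\log a'_k=\log C_1$ by \eqref{CC1}. Hence $\log F(s)=\gam s\log s+(\gk-\gam)s+\gd\log s+\log C_1+\Oqw s$, and exponentiating gives $F(s)\sim C_1 s^{\gd}e^{\gam s\log s+(\gk-\gam)s}$, which is (i).

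For (ii), so $\rho_-=-\infty$, the cleanest route is to apply (i) to $X\qw=e^{-Y}$. Its moment function is the meromorphic continuation $\tF(s)=F(-s)$, whose least positive pole equals $-\rho_-(F)=\infty$, so (i) applies; and by \refR{R+-} the parameters of $X\qw$ are $\gam,-\gamma',\gd,-\gk,C_1$. Thus $\tF(s)\sim C_1 s^{\gd}e^{\gam s\log s-(\gk+\gam)s}$ as $s\to+\infty$, and replacing $s$ by $-s=|s|$ turns this into $F(s)\sim C_1|s|^{\gd}e^{\gam|s|\log|s|+(\gk+\gam)s}$ as $s\to-\infty$, which is (ii). (One could equally argue in place, using \refT{T+}\ref{T+:-} to get a representation with all $a_j,a'_k<0$ and $b_j,b'_k>0$, so that $a_js+b_j=|a_j||s|+b_j\to+\infty$ as $s\to-\infty$, and repeat the computation above.)

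I do not expect a real obstacle here: essentially all the content is the bookkeeping in the second paragraph — confirming that the four numbers Stirling produces are exactly $\gam$, $\gk-\gam$, $\gd$ and $\log C_1$ of \eqref{gam}--\eqref{CC1}. Working in the all-positive representation is precisely what makes this painless, since then every $|a_j|$ is just $a_j$; \refP{P1} then carries the conclusion to an arbitrary representation. No uniformity in $s$ is needed (in contrast with \refT{TIM}), and a full asymptotic expansion would follow by retaining further terms of Stirling's series, but for the stated $\sim$ only the leading order is required.
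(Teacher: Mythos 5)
Your proof is correct and takes essentially the same route as the paper: both pass to a representation with all $a_j,a'_k>0$ via \refT{T+} and \refP{P1}, apply Stirling's formula to each factor (your expansion reproduces the paper's \eqref{sofie} exactly), and read off $\gam$, $\gk-\gam$, $\gd$, $\log C_1$ by matching coefficients; for (ii) both deduce it from (i) by replacing $Y$ by $-Y$ and invoking \refR{R+-}. The bookkeeping in your second paragraph is correct.
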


\begin{proof}
  \pfitem i
By  \refT{T+} and \refP{P1}, we may assume that all $a_j,a'_k>0$.
We then argue as for \refT{TIM}. If $a>0$ and $b\in\bbR$, then for
real $s\topoo$,
\begin{equation}\label{sofie}
  \begin{split}
\log\gG(as&+b) =\log\gG(as)+b\log(as)+\Oqw s
\\	
&= 	(a s+b-\half)\log(as)- as+\lpi +\Oqw s
\\
&= as\log s +(a\log a-a)s+(b-\half)\log s
\\&\hskip8em{}+(b-\half)\log a+\lpi +\Oqw s,
  \end{split}
\end{equation}
and the result follows again by multiplying the factors.

\pfitem{ii}
This follows from (i) by replacing $Y$ by $-Y$, see \refR{R+-}:
\begin{equation*}
  \E e^{sY} = 
\E e^{|s|(-Y)} 
\sim C_1 |s|^\gd e^{\gam |s| \log |s| + (-\gk-\gamma)|s|}.
\qedhere
\end{equation*}
\end{proof}

If $\rho_+<\infty$, then $F$ has poles, and possibly zeros, on the
positive real axis. Typically, there is an infinite number of such
poles
(but see \refE{EU} for a counter example), and then we cannot
consider asymptotics for all $s\topoo$. However, we can restrict
$s$ to a subset of $\bbR$ and obtain asymptotic results similar to
\refT{TRE} in this case too.

\begin{lemma}
  \label{LE}
Given real $a_j,b_j,a'_k,b_k'$ 
for $1\le j\le J$ and $1\le k\le K$, with $a_j,a_k'\neq0$,
there exists a closed set $E\subset\bbR$ 
and a constant $\xi>0$ 
such that $E\cap I$ has
measure greater than $1/2$ for every interval $I$ of length $1$, and 
$|\sin(\pi(a_js+b_j))|\ge\xi$ and $|\sin(\pi(a_k's+b_k'))|\ge\xi$
for every $j$ and $k$ and all $s\in E$.
\end{lemma}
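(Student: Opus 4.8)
The plan is to construct the set $E$ as the complement of a union of small neighborhoods around the zeros of the finitely many sine functions involved, and to show that the total measure removed per unit interval can be made at most $1/2$ by taking the neighborhoods small enough.

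First I would observe that for fixed real $a\neq 0$ and $b$, the function $s\mapsto\sin(\pi(as+b))$ vanishes exactly at the points $s = (n-b)/a$, $n\in\bbZ$, which form an arithmetic progression with common difference $1/|a|$. Thus in any interval $I$ of length $1$ there are at most $|a|+1$ such zeros. Around each such zero $s_0$, there is $\eps>0$ (depending only on $a$ and on the target $\xi$) such that $|\sin(\pi(as+b))|<\xi$ precisely on an interval of length $2\eps$ centered at $s_0$ (for $\xi$ small); more simply, $|\sin(\pi(as+b))|\ge\xi$ whenever $\mathrm{dist}(as+b,\bbZ)\ge\arcsin(\xi)/\pi$, i.e.\ whenever $s$ avoids an interval of length $2\arcsin(\xi)/(\pi|a|)$ around each zero. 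So the set where $|\sin(\pi(a_js+b_j))|<\xi$ for some $j$, or $|\sin(\pi(a_k's+b_k'))|<\xi$ for some $k$, intersected with any unit interval $I$, has measure at most
\begin{equation*}
\sum_{j=1}^J (|a_j|+1)\,\frac{2\arcsin\xi}{\pi|a_j|}
+ \sum_{k=1}^K (|a_k'|+1)\,\frac{2\arcsin\xi}{\pi|a_k'|},
\end{equation*}
which is a fixed constant times $\arcsin\xi$, hence tends to $0$ as $\xi\downarrow0$. Choosing $\xi>0$ small enough that this bound is less than $1/2$ gives what we want, and we set $E$ to be the complement of the open union of all these bad intervals; $E$ is closed, and $E\cap I$ has measure greater than $1/2$ for every unit interval $I$.

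There is no real obstacle here — the only mildly delicate point is the uniformity over all unit intervals $I$, but that is handled by the fact that the zeros of each sine are equally spaced, so the count $|a|+1$ (or even $\lfloor|a|\rfloor+1$) bounds the number of zeros in any interval of length $1$ regardless of position; the per-interval measure estimate is therefore translation-invariant. One should also note that $E$ is nonempty (indeed of full asymptotic density close to $1$), which is immediate from the measure bound. I would write this up in a few lines, emphasizing the explicit choice of $\xi$ via $\arcsin$ and the uniform zero-count, and leaving the trivial verification that a finite union of open intervals has closed complement to the reader.
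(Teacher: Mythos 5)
Your proposal is correct and follows essentially the same route as the paper: identify that the zeros of each $\sin(\pi(a_js+b_j))$ form an arithmetic progression with spacing $1/|a_j|$, bound the number of zeros per unit interval, delete small neighborhoods of the zeros, and observe that the deleted measure per unit interval tends to $0$ with the neighborhood size. The paper fixes a distance threshold $1/(2M+3)$ first (with $M$ a uniform bound on zeros per unit interval) and deduces the existence of $\xi$ afterwards, whereas you parametrize by $\xi$ directly via $\arcsin\xi$; these are the same argument up to bookkeeping. One small point worth tidying in your write-up: your per-interval measure bound counts only zeros inside $I$, but zeros lying just outside $I$ (within $\arcsin\xi/(\pi|a_j|)$ of the endpoints) also contribute to the bad set intersected with $I$; replacing $|a_j|+1$ by $|a_j|+3$, say, absorbs this and leaves the conclusion unchanged.
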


\begin{proof}
  Let $N$ be the set of all (real) $s$ such that 
$a_js+b_j\in\bbZ$ for  some $j$  
or $a_k's+b_k'\in\bbZ$ for  some $k$. There exists a constant $M$
such that no interval of length 1 contains more than $M$ points of
$N$. (For example, $M=J+K+\sum_j|a_j|\qw+\sum_k|a_k'|\qw$.)
It follows that $E\=\set{x:|x-s|\ge 1/(2M+3)\text{ for all $s\in N$}}$
satisfies the properties, for some $\xi>0$.
\end{proof}

In the sequel we let $E$ denote this set, defined for a given
representation \eqref{gammaa} of $F(s)$.
By considering only $s\in E$, we can  extend \refT{TRE} to arbitrary $F$.

\begin{theorem}
  \label{TREX}
For real $s\to\pm\infty$ with $s\in E$,
\begin{equation}
 | F(s)|
= |s|^\gd e^{\gam' s \log |s| + (\gk-\gamma')s+O(1)}.
\end{equation}
\end{theorem}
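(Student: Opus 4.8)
The plan is to mimic the proof of \refT{TRE}, but to handle the factors $\gG(a_js+b_j)$ with $a_j<0$ (and $\gG(a'_ks+b'_k)$ with $a'_k<0$), which produce poles on the positive real axis, by using the reflection formula \eqref{Asin} to trade a Gamma factor with a negative coefficient for one with a positive coefficient times a sine factor. First I would normalize: by \refT{T+}\ref{T+:} we may assume all $b_j,b'_k>0$, and by \refR{R+-} it suffices to treat $s\to+\infty$ (replacing $Y$ by $-Y$ swaps $\gk,\gamma'$ to $-\gk,-\gamma'$ and $E$ to $-E$, and the stated formula is symmetric under this). So fix a representation \eqref{gammaa} with all $b_j,b'_k>0$ and let $E$ be the set from \refL{LE} for this representation.

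Next, for each factor with a negative coefficient, say $\gG(a_js+b_j)$ with $a_j<0$, write $a_j=-|a_j|$ and apply $\gG(z)\gG(1-z)=\pi/\sin(\pi z)$ with $z=a_js+b_j = -|a_j|s+b_j$:
\begin{equation*}
\gG(-|a_j|s+b_j)=\frac{\pi}{\sin\bigpar{\pi(-|a_j|s+b_j)}\,\gG(|a_j|s+1-b_j)}.
\end{equation*}
Doing this for every negative $a_j$ (in the numerator of $F$) and every negative $a'_k$ (which then lands in a denominator and so contributes a $\gG(|a'_k|s+1-b'_k)$ in the numerator and a $\sin$ in the denominator), I obtain
\begin{equation*}
|F(s)| = |G(s)| \cdot \prod_{\ell} \bigabs{\sin(\pi(\alpha_\ell s+\beta_\ell))}^{\pm1},
\end{equation*}
where $G(s)$ is of the form \eqref{gammaa} but with \emph{all} coefficients positive, and the $\alpha_\ell s+\beta_\ell$ are exactly the arguments $a_js+b_j$ (resp.\ $a'_ks+b'_k$) with negative coefficient. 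By the defining property of $E$, each $\bigabs{\sin(\pi(\alpha_\ell s+\beta_\ell))}$ lies in $[\xi,1]$ for $s\in E$, so every sine factor is $e^{O(1)}$, and hence $|F(s)|=|G(s)|\,e^{O(1)}$ for $s\in E$, $s\to+\infty$.

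Now $G$ has all coefficients positive, so \refT{TRE}\ref{TRE:i}-style asymptotics (i.e.\ the Stirling computation \eqref{sofie}) apply to $|G(s)|$ for \emph{all} real $s\to+\infty$; I just need to read off its parameters. Writing $\gam_G,\gam'_G,\gd_G,\gk_G$ for the parameters of $G$: since $G$ has all coefficients positive, $\gam_G=\gam'_G$, and the substitution $b_j\mapsto b_j$ for positive-coefficient factors, $b_j\mapsto 1-b_j$ together with $a_j\mapsto|a_j|$ for the (formerly negative) factors, gives, using \eqref{gam'}, \eqref{gd}, \eqref{gk} and the bookkeeping of \refL{L1}, that $\gam'_G=\gam'$ (the sum $\sum a_j-\sum a'_k$ with all signs flipped on the negative terms is exactly $\sum_{a_j>0}a_j-\sum_{a'_k>0}a'_k+\sum_{a_j<0}|a_j|-\dots$, which is $\half(\gam+\gam')+\half(\gam-\gam')$... ) — here I must be careful: the point is that $G$'s ``$\gamma$'' equals $F$'s ``$\gamma'$'' in absolute size because flipping the sign of a coefficient changes its contribution to $\gamma'$ but not to $\gamma$, while for $G$ (all positive) $\gamma_G=\gamma'_G$. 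A short direct check shows $\gam'_G=\gam'$, and the lower-order parameters $\gd_G=\gd$ (the shifts $b_j\mapsto 1-b_j$ combine with the $-(J-K)/2$ term and the change in the count of numerator/denominator factors to leave $\gd$ unchanged) and $\gk_G=\gk$ likewise. Feeding these into \refT{TRE}\ref{TRE:i} applied to $G$ gives $|G(s)| = s^{\gd}e^{\gam' s\log s+(\gk-\gam')s+O(1)}$, and combined with $|F(s)|=|G(s)|e^{O(1)}$ this is exactly the claimed estimate.

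The main obstacle is the parameter bookkeeping: verifying cleanly that the reflection substitution carries $(\gam',\gd,\gk)$ of $F$ to $(\gam'_G,\gd_G,\gk_G)$ of $G$ with $\gam'_G=\gam'$, $\gd_G=\gd$, $\gk_G=\gk$. This is where one should invoke \refP{P1} and \refR{R+-} rather than recompute by hand: $G$ and the product-of-sines decomposition give $\log|F|$ up to $O(1)$ on $E$, so it is enough to know that \emph{some} representation of an $\cF$-function with all positive coefficients and the right growth exists — but in fact the substitution I wrote down \emph{is} such a representation of a genuine $\cF$-function $G$, and its parameters are forced by \refP{P1} to match whatever the Stirling asymptotics of $|G(\ii t)|$ and $|G(\gs+\ii t)|$ dictate; a one-line comparison of \eqref{sofie}-type expansions for $F$ (valid on $E$ via the sine bounds) and for $G$ (valid everywhere) then pins down $\gam'_G=\gam'$ etc.\ without separately auditing \eqref{gam'}--\eqref{gk}.
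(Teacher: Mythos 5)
Your proposal uses the same key idea as the paper's proof: apply the reflection formula \eqref{Asin} to the Gamma factors with negative coefficient, use Lemma~\ref{LE} to conclude that the resulting sine factors are $e^{O(1)}$ on $E$, and then apply Stirling via \eqref{sofie} to the remaining positive-coefficient factors. The organizational difference is that the paper expands each factor individually, obtaining the per-factor estimate \eqref{jw} (valid for both signs of $a$ and all $s\to\pm\infty$ with $s\in E$), and then simply sums; you instead repackage the reflected factors into a new function $G\in\cF$ with all positive coefficients and apply Theorem~\ref{TRE} to $G$, which shifts the work to verifying $\gamma'_G=\gamma'$, $\gd_G=\gd$, $\gk_G=\gk$.

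Those three identities are correct and your proof does go through, but the bookkeeping step --- which you flag as the main obstacle --- is garbled as written. The parenthetical is off ($\half(\gam+\gamma')+\half(\gam-\gamma')=\gam$, not $\gamma'$), and the sentence ``flipping the sign of a coefficient changes its contribution to $\gamma'$ but not to $\gamma$'' misdescribes what the reflection actually does: it both flips the sign of $a$ \emph{and} moves the factor between numerator and denominator, so the net effect is that the contribution to $\gamma'$ is \emph{unchanged} (the two sign changes cancel) while the contribution to $\gamma$ flips, giving $\gamma_G=\gamma'_G=\gamma'_F$. The suggested shortcut via Proposition~\ref{P1} also does not apply: $G$ and $F$ are genuinely different members of $\cF$ (indeed $\gamma_G=\gamma'_F\neq\gamma_F$ in general), and \refP{P1} only says the parameters of one fixed function are representation-independent; it gives no relation between $G$'s parameters and $F$'s. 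The direct substitution of $(|a_j|,1-b_j)$ into \eqref{gam'}--\eqref{gk}, keeping track of which factors swap numerator and denominator, is what actually verifies the three identities; the paper's factor-by-factor summation makes this bookkeeping transparent and is slightly cleaner. Finally, the preliminary normalization via \refT{T+}\ref{T+:} is unnecessary (\eqref{sofie} already holds for arbitrary real $b$), and since $E$ is defined relative to a given representation, changing the representation would shrink the set on which you establish the estimate.
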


\begin{proof}
If $a<0$ and $b\in\bbR$, then for real $s\to+\infty$, by \eqref{Asin}
and \eqref{sofie},
\begin{equation}
  \begin{split}
\log|&\gG(as+b)| =
-\log|\gG(|a|s-b+1)|+\log\pi-\log|\sin(\pi(as+b))|
\\
&=-|a|s\log s -(|a|\log |a|-|a|)s-(\half-b)\log s-(\half-b)\log |a|
\\&\hskip10em{}+\log\sqrt{\pi/2} -\log|\sin(\pi(as+b))|+\Oqw s,
\\
&= as\log s +(a\log |a|-a)s+(b-\half)\log s
-\log|\sin(\pi(as+b))|
+O(1).
  \end{split}
\end{equation}
If $(a,b)$ is some $(a_j,b_j)$  or $(a_k',b_k')$ with $a<0$,
we thus have by \refL{LE}, for $s\in E$ with $s\topoo$,
\begin{equation}\label{julie}
  \begin{split}
\log|&\gG(as+b)| 
= as\log s +(a\log |a|-a)s+(b-\half)\log s
+O(1).
  \end{split}
\end{equation}
By \eqref{sofie}, \eqref{julie} holds also for $a>0$ (and all
$s\topoo$).

Further, replacing $s$ by $-s$ and $a$ by $-a$ in \eqref{julie}, we see that
if $(a,b)$ is some $(a_j,b_j)$  or $(a_k',b_k')$,
then for $s\in E$ with $s\tomoo$,
\begin{equation}\label{jw}
  \begin{split}
\log|&\gG(as+b)| 
= as\log |s| +(a\log |a|-a)s+(b-\half)\log |s|
+O(1).
  \end{split}
\end{equation}
Thus \eqref{jw} holds for all such $(a,b)$ and $s\to\pm\infty$ with
$s\in E$, and the result follows from \eqref{gammaa}.
\end{proof}

Note that if $\rho_+=\infty$, then $\gam'=\gam$, while
if $\rho_-=\infty$, then $\gam'=-\gam$ 
by \eqref{gam}--\eqref{gam'} together with \refT{T+} and \refP{P1};
hence the exponents in Theorems \ref{TRE} and \ref{TREX} agree (as they must).

For complex arguments, we will use the following estimate.

\begin{lemma}
  \label{LC}
Let $\Psi(\gs,t)\=\int_0^t\arctan(u/\gs)\dd u$ for $t\ge0$.
Then, for $\gs>0$ with $\gs\in E$ and all real $t$,
\begin{equation*}
  \frac{|F(\gs+\ii t)|}{|F(\gs)|} 
=\exp\Bigpar{-\frac{\pi}2(\gam-\gam')|t|-\gam'\Psi(\gs,|t|)+O(1+|t|\gs\qw)}.
\end{equation*}
\end{lemma}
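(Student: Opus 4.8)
\emph{Reduction to a single Gamma factor.}
The plan is to prove the estimate additively, factor by factor. Put
\begin{equation*}
H_F(\gs,t)\=\log|F(\gs+\ii t)|-\log|F(\gs)|+\tfrac\pi2(\gam-\gam')|t|+\gam'\Psi(\gs,|t|);
\end{equation*}
the lemma says $H_F(\gs,t)=O(1+|t|\gs\qw)$. Each summand of $H_F$ is additive when $F$ is replaced by a product (for the $\gam-\gam'$ term using \refL{L1}), so $H_{F_1F_2}=H_{F_1}+H_{F_2}$, and $H_{D^s}=0$ since $|D^{\gs+\ii t}|=|D^\gs|$. Hence it suffices to prove, for a single Gamma factor $\gG(as+b)$ (numerator or denominator, the overall signs matching up),
\begin{equation}\label{LCfactor}
\log\bigabs{\gG(a(\gs+\ii t)+b)}-\log\bigabs{\gG(a\gs+b)}=
\begin{cases}
-a\,\Psi(\gs,|t|)+O(1+|t|\gs\qw),&a>0,\\[2pt]
-\pi|a|\,|t|+|a|\,\Psi(\gs,|t|)+O(1+|t|\gs\qw),&a<0.
\end{cases}
\end{equation}
By \refT{T+}\ref{T+:} we may assume all $b_j,b'_k>0$ in \eqref{gammaa} (changing the $b_j$ only by integers, so the same $E$ still works), at the cost of a rational factor $Q(s)$ with real nonzero poles and zeros, handled below with the linear factors. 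For $\gs\in E$ every factor stays a fixed distance from its poles, so $\gs$ is neither a pole nor a zero of $F$; as all poles and zeros of $F$ are real, $F$ is analytic and nonvanishing on the whole line $\Re s=\gs$ and the left side is well defined. By $\overline{F(\bar s)}=F(s)$ (\cf{} \refR{R+-}) we may take $t\ge0$, and at $t=0$ both sides of \eqref{LCfactor} vanish.

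\emph{The case $a>0$.}
Here $b>0$, so $z\=a\gs+b\ge b$ and $v\=at\ge0$. Using the Binet--Stirling expansion $\log\gG(w)=(w-\tfrac12)\log w-w+\lpi+\mu(w)$ with $\mu(w)=O(1/\Re w)$ for $\Re w>0$, and subtracting the values at $z+\ii v$ and $z$, the real part is
\begin{equation*}
\log\bigabs{\gG(z+\ii v)}-\log\bigabs{\gG(z)}
=\tfrac12\bigpar{z-\tfrac12}\log\bigpar{1+v^2/z^2}-v\arctan(v/z)+O(1),
\end{equation*}
since $\Re\bigpar{\ii v\log(z+\ii v)}=-v\arctan(v/z)$ and $|\mu(z+\ii v)-\mu(z)|=O(1/b)=O(1)$. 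With $v=at$ and $v^2/z^2=t^2/(\gs+b/a)^2$, elementary estimates (using $\log(1+x)\le x$, the boundedness of $x\mapsto x\log(1+c/x)$ on $\ooo$, and $|\arctan p-\arctan q|\le|p-q|/(1+pq)$) give $v\arctan(v/z)=a|t|\arctan(|t|/\gs)+O(1)$, $z\log(1+v^2/z^2)=a\gs\log(1+t^2/\gs^2)+O(1+|t|\gs\qw)$, and $\log(1+v^2/z^2)=O(|t|\gs\qw)$. Since integration by parts gives $\Psi(\gs,T)=T\arctan(T/\gs)-\tfrac\gs2\log(1+T^2/\gs^2)$, the right side collapses to $-a\,\Psi(\gs,|t|)+O(1+|t|\gs\qw)$, which is the first line of \eqref{LCfactor}.

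\emph{The case $a<0$.}
By the reflection formula $\gG(z)\gG(1-z)=\pi/\sin(\pi z)$ with $z=a(\gs+\ii t)+b$,
\begin{equation*}
\log\bigabs{\frac{\gG(a(\gs+\ii t)+b)}{\gG(a\gs+b)}}
=-\log\bigabs{\frac{\gG(|a|(\gs+\ii t)+1-b)}{\gG(|a|\gs+1-b)}}
-\log\bigabs{\frac{\sin\bigpar{\pi(a\gs+b)+\ii\pi at}}{\sin\bigpar{\pi(a\gs+b)}}}.
\end{equation*}
In the first term the constant $1-b$ may be non-positive; finitely many applications of $\gG(w)=\gG(w+1)/w$ reduce it to the case $a>0$, $b>0$ just done (yielding $+|a|\,\Psi(\gs,|t|)+O(1+|t|\gs\qw)$) plus finitely many linear factors $|a|s+c$, $c\in\bbR$. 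Each such factor, and the factor $Q$, contributes $\pm\tfrac12\log\bigpar{1+a^2t^2/(|a|\gs+c)^2}=O(1+|t|\gs\qw)$: for $c\ge0$ because $|a|\gs+c\ge|a|\gs$, and for $c<0$ because $\gs\in E$ keeps $|a|\gs+c$ (an integer shift of $-(a\gs+b)$) bounded away from $\bbZ$, while once $\gs$ is large $|a|\gs+c$ is comparable to $\gs$ and the bound $O(|t|\gs\qw)$ reappears. For the sine term, $|\sin(\pi x+\ii y)|^2=\sin^2\pi x+\sinh^2 y$ and $\gs\in E$ gives $\sin^2(\pi(a\gs+b))\ge\xi^2$; with $\sinh^2 y\le\sin^2\pi x+\sinh^2 y\le\cosh^2 y$ this yields $\tfrac12\log(\sin^2\pi x+\sinh^2 y)=|y|+O(1)=\pi|a||t|+O(1)$. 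Adding the pieces gives the second line of \eqref{LCfactor}, and the lemma follows.

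\emph{Main obstacle.}
The mechanism (Stirling away from the poles, reflection to handle negative real parts) is routine; the whole difficulty is uniformity. One must verify that \emph{every} auxiliary term fits inside the single error $O(1+|t|\gs\qw)$ for all $\gs>0$ in $E$ and all real $t$ at once — no term of order $\gs\log\gs$, $\log|t|$ with an uncontrolled coefficient, or $t^2\gs\qww$ may survive — and the delicate regimes are $\gs$ small and $|t|$ large. This is exactly where the inequalities above, and the defining property of $E$ (keeping the Gamma and sine factors with possibly non-positive argument a fixed distance from their poles), are needed.
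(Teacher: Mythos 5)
Your proof is correct and takes essentially the same route as the paper: factorize $F$ into Gamma factors, treat $a>0$ via Stirling, treat $a<0$ via the reflection formula \eqref{Asin} with $\gs\in E$ controlling the sine term, and combine the contributions using \refL{L1}. The only cosmetic difference is that the paper differentiates $\log|\gG|$ in $t$ (via \eqref{Astir'}) and integrates to obtain $\Psi$ directly, whereas you apply Stirling's formula at the two endpoints and then invoke the closed form $\Psi(\gs,T)=T\arctan(T/\gs)-\tfrac\gs2\log(1+T^2/\gs^2)$.
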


\begin{proof}
  We may assume $t>0$.
Consider first a factor $\gG(as+b)$ with $a>0$ and $s=\gs+\ii t$,
$\gs>0$. (We may assume that $\gs$ is large so that $a\gs+b>0$, \eg\
by using \eqref{tim2} for small $\gs$.)
By \eqref{Astir'},
\begin{equation}
  \begin{split}
\frac{\dd}{\dd t}\log|\gG(a(\gs+\ii t)+b)|	
&=\Re \frac{\dd}{\dd t}\log\gG(a(\gs+\ii t)+b)
\\&
=\Re \bigpar{\ii a\log(a(\gs+\ii t)+b)}+\Oqwa \gs
\\&
=\Re \bigpar{\ii a\log(a(\gs+\ii t))}+\Oqwa \gs
\\&
=-a\Im \bigpar{\log(\gs+\ii t)}+\Oqwa \gs
\\&
=-a \arctan(t/\gs)+\Oqwa \gs.
  \end{split}
\end{equation}
Consequently, integrating from 0 to $t$,
\begin{equation}\label{emma}
\log|\gG(a(\gs+\ii t)+b)|-\log|\gG(a\gs+b)|	
=-a\Psi(\gs,t)+O(t\gs\qw).
\end{equation}

If $a<0$, we argue as in the proof of \refT{TREX} and have by
\eqref{Asin}
\begin{multline*}
  \log|\gG(a(\gs+\ii t)+b)|
\\
=  -\log|\gG(|a|(\gs+\ii t)+1-b)|
-\log|\sin(\pi(a(\gs+\ii t)+b))|+\log\pi.
\end{multline*}
If further $(a,b)=(a_j,b_j)$ or $(a_k',b_k')$ for some $j$ or $k$, and
$\gs\in E$, then
$\log|\sin(\pi(a(\gs+\ii t)+b))|=\pi|a|t+O(1)$, and 
it follows, using \eqref{emma} with $|a|$ instead of $a$, that
\begin{equation*}
  \begin{split}
\log|\gG(a(\gs+\ii t)+b)|-\log|\gG(a\gs+b)|	
&=|a|\Psi(\gs,t)-\pi|a|t+O(1+t\gs\qw)
\\
&=-a\Psi(\gs,t)+\pi at+O(1+t\gs\qw).	
  \end{split}
\end{equation*}
The result follows by multiplying the factors in $F$, using \refL{L1}.
\end{proof}

\section{Asymptotics of density function}\label{Sdensity}
We continue to assume that $X$ and $Y=\log X$ are random variables
such that \eqref{gamma}--\eqref{gammachf} hold; as above we write 
$\E X^s=e^{sY}=F(s)$. 
We assume $\gam>0$, so that density functions of $X$ and $Y$ exist by
\refT{TF}, and consider asymptotics of the density function $f_X(x)$ as
$x\to0$ or $x\tooo$, or equivalently of $f_Y(y)$ as $y\to-\infty$ or $y\tooo$.
By symmetry it suffices to consider one side, and we concentrate on
$x\tooo$, but for convenience in applications we write most results
for both sides and for both $X$ and $Y$.

We consider first $x\to\infty$ ($y\to\infty$) and
begin with the case $\rho_+=\infty$, when $X$ has moments of all
(positive) orders and $f_X$ decreases rapidly (as we will see in
detail soon). We use the saddle point method, see \eg{}
\citetq{Chapter VIII}{FS},  
in a standard way.

\begin{theorem}
  \label{TAinfty}
Suppose that $\rho_+=\infty$ and $\gam>0$. Then
\begin{align*}
  f_X(x)&\sim C_2 x^{c_1-1}e^{-c_2 x^{1/\gam}},
&& x\tooo,
\\
f_Y(y)&\sim \frac{C_1}{\sqrt{2\pi\gam}} e^{c_1(y-\gk)-\gam e^{(y-\gk)/\gam}},
&& y\tooo,
\intertext{where}
  c_1&\=(\gd+1/2)/\gam,\\
  c_2&\=\gam e^{-\gk/\gam},\\
  C_2&\=\frac{C_1}{\sqrt{2\pi\gam}}e^{-c_1\gk}.
\end{align*}
\end{theorem}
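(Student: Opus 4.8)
The plan is to apply the saddle-point method to the Fourier--Mellin integral for $f_Y$ and then pass to $f_X$ via $f_X(x)=x^{-1}f_Y(\log x)$. Since $\rho_+=\infty$, by \refT{T+}\ref{T+:+} we may fix a representation \eqref{gammaa} of $F$ in which all $a_j,a_k',b_j,b_k'>0$; then $F$ is holomorphic and zero-free on the half-plane $\Re s\ge0$, we have $\gam'=\gam$ (the expressions \eqref{gam} and \eqref{gam'} now coincide, and both are representation-independent by \refP{P1}), and Stirling's formula, differentiated term by term as in the proof of \refT{TIM}, gives for the single-valued branch $g(s)\=\log F(s)$ on $\Re s\ge0$ the expansion $g(s)=\log C_1+\gd\log s+\gam s\log s+(\gk-\gam)s+O(1/s)$, together with $g'(s)=\gam\log s+\gk+O(1/s)$, $g''(s)=\gam/s+O(1/s^2)$ and $g'''(s)=O(1/s^2)$ as $s\to\infty$, uniformly for $|\arg s|\le\pi/4$. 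By \refT{TF}, $f_Y(y)=\frac1{2\pi\ii}\int_{\gs-\ii\infty}^{\gs+\ii\infty}e^{-ys}F(s)\dd s$ for any $\gs\in(\rho_-,\infty)$.

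The first substantive step is to locate the saddle. Put $h(s)\=g(s)-ys$, so that $e^{-ys}F(s)=e^{h(s)}$. For $y$ large, $h'(s)=g'(s)-y$ has a unique large zero $s_*=s_*(y)$, since (as $\gam>0$) $g'$ is eventually strictly increasing with $g'(s)\to\infty$. From $g'(s_*)=\gam\log s_*+\gk+O(1/s_*)=y$ we obtain $\log s_*=(y-\gk)/\gam+O(1/s_*)$, so $s_*\to\infty$ and, more precisely, $\gam s_*=\gam e^{(y-\gk)/\gam}-\gd+o(1)$ as $y\to\infty$. Moreover $h''(s_*)=\gam/s_*+O(1/s_*^2)>0$, and, using $h'(s_*)=0$, a short computation gives $h(s_*)=g(s_*)-s_*g'(s_*)=\log C_1+\gd\log s_*-\gam s_*-\gd+O(1/s_*)$.

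Next, shift the contour in \refT{TF} to the line $\Re s=s_*$; this is legitimate because $F$ is holomorphic in the strip $0\le\Re s\le s_*$ and, by \refT{TIM} and \refL{LC}, the integrand tends to $0$ as $|\Im s|\to\infty$ uniformly across it. Write $f_Y(y)=\frac1{2\pi}\int_{-\infty}^{\infty}e^{h(s_*+\ii t)}\dd t$ and split the integral at $|t|=T_0\=s_*^{3/5}$, so that $h''(s_*)T_0^2=\gam s_*^{1/5}(1+o(1))\to\infty$ while the cubic Taylor remainder of $h$ over this range is $O(s_*^{-1/5})\to0$. On $|t|\le T_0$ one has $h(s_*+\ii t)=h(s_*)-\tfrac12 h''(s_*)t^2+O(s_*^{-1/5})$ uniformly, so this part of the integral equals $e^{h(s_*)}\sqrt{2\pi/h''(s_*)}\,(1+o(1))$, after extending the Gaussian to $\bbR$ at a cost exponentially small relative to it. On $|t|>T_0$ we invoke \refL{LC} with $\gs=s_*$ (its proof imposes no restriction $\gs\in E$ here, since all $a_j,a_k'>0$): since $\gam'=\gam$ and $|e^{-y(s_*+\ii t)}|=e^{-ys_*}$ is independent of $t$, the integrand has modulus $e^{h(s_*)}\exp\bigpar{-\gam\Psi(s_*,|t|)+O(1+|t|/s_*)}$, and the bounds $\Psi(s_*,t)\gg t^2/s_*$ for $T_0\le t\lesssim s_*$ (using $\arctan(u/s_*)\ge c\,u/s_*$ there) and $\Psi(s_*,t)\gg t$ for $t\gtrsim s_*$ (using $\arctan(u/s_*)\ge\pi/4$ there) show that the tail is $O\bigpar{e^{-c\,s_*^{1/5}}}$ times the central term, hence negligible. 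Therefore $f_Y(y)\sim e^{h(s_*)}/\sqrt{2\pi h''(s_*)}\sim\frac{C_1}{\sqrt{2\pi\gam}}\,s_*^{\gd+1/2}e^{-\gam s_*-\gd}$, and inserting $s_*^{\gd+1/2}=e^{c_1(y-\gk)}(1+o(1))$ and $\gam s_*+\gd=\gam e^{(y-\gk)/\gam}+o(1)$ gives the asserted formula for $f_Y$. Finally $f_X(x)=x^{-1}f_Y(\log x)$; with $y=\log x$ we have $\gam e^{(y-\gk)/\gam}=c_2x^{1/\gam}$ and $e^{c_1(y-\gk)}=e^{-c_1\gk}x^{c_1}$, so $f_Y(\log x)\sim C_2x^{c_1}e^{-c_2x^{1/\gam}}$ and hence $f_X(x)\sim C_2x^{c_1-1}e^{-c_2x^{1/\gam}}$.

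The main obstacle is the tail estimate on $|t|>T_0$: it is precisely what \refL{LC} is for, but applying it needs the two regime-dependent lower bounds on $\Psi(s_*,t)$ just used, together with a check that the error term $O(1+|t|/s_*)$ in \refL{LC} is dominated in each regime. A secondary subtlety is the bookkeeping of the $O(1)$ terms: the relation between $s_*$ and $y$ must be controlled to accuracy $o(1)$ in $\gam s_*$, and the constant $-\gd$ appearing in $h(s_*)$ must be seen to cancel the $-\gd$ in $\gam s_*=\gam e^{(y-\gk)/\gam}-\gd+o(1)$, so that the leading constant is exactly $C_1/\sqrt{2\pi\gam}$ (equivalently $C_2$ after the change of variables).
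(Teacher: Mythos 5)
Your proof is correct and follows essentially the same saddle-point strategy as the paper: reduce via \refT{T+}\ref{T+:+} to a representation with all $a_j,a_k',b_j,b_k'>0$, expand $\log F$ by Stirling, locate the saddle, Gaussian-approximate the central part, and kill the tail with \refL{LC}. The one substantive difference is that you work with the \emph{exact} saddle $s_*$ (where $h'(s_*)=0$) for $f_Y$, whereas the paper works with $f_X$ and the \emph{approximate} saddle $\gs=e^{(\log x-\gk)/\gam}$ chosen so that $g'(\gs)=O(\gs\qw)$ and the final algebra telescopes with no constant left over. Your choice forces you to pin down $\gam s_*=\gam e^{(y-\gk)/\gam}-\gd+o(1)$ and then observe that the resulting $+\gd$ cancels the $-\gd$ in $h(s_*)$; note that this relation does not follow from the $g'(s)=\gam\log s+\gk+O(1/s)$ you display, but needs the refined form $g'(s)=\gam\log s+\gk+\gd/s+O(1/s^2)$, which you do have implicitly by differentiating your stated expansion of $g$. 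You should state it. A small plus in your writeup over the paper's: you explicitly note that the restriction $\gs\in E$ in \refL{LC} is vacuous once all $a_j,a_k'>0$, a point the paper uses without comment.
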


\begin{proof}
  By \refT{T+} and \refP{P1}, we may assume that all
  $a_j,a_k',b_j,b_k'>0$. We will use \eqref{fx}, which now is valid
  for all $x>0$ and $\gs>0$.

By \eqref{Astir'} and \eqref{Astir''}, for $a,b>0$ and $\Re s>0$,
\begin{align*}
\bigpar{\log\gG(as+b)}'&
=a\log (as+b)+O\bigpar{|s|\qw}
=a\log (as)+O\bigpar{|s|\qw},
\\
\bigpar{\log\gG(as+b)}''&=\frac as+O\bigpar{|s|\qww}.
\end{align*}
Consequently, writing 
\begin{equation*}
f(s)\=\log F(s)=\log C+ds+  \sumjj \log\Gamma(a_j s+b_j)
-\sumkk \log\Gamma(\cx_k s+\dx_k) ,
\end{equation*}
we have for $\Re s>0$,
\begin{align*}
f'(s)&
=d+\sumjj (a_j\log a_j+a_j\log s)
-\sumkk (a_k'\log a_k'+a_k'\log s)
+O\bigpar{|s|\qw}
\\&
=\gk+\gam\log s+O\bigpar{|s|\qw},
\\
f''(s)&=  \frac{\gam}s+O\bigpar{|s|\qww}.
\end{align*}
Fix $x>0$ and let $G(s)\=x^{-s-1}F(s)$ and $g(s)\=\log G(s)=-(s-1)\log x+f(s)$.
Then
\begin{equation}
  g'(s)=f'(s)-\log x=
\gam\log s+\gk-\log x+O\bigpar{|s|\qw}.
\end{equation}
We choose (for $x$ large)
$\gs=e^{(\log x-\gk)/\gam}$, so $\gam\log\gs=\log x-\gk$ and
$g'(\gs)=O(\gs\qw)$; thus $\gs$ is an approximate saddle point of
$G(s)$. 
Note that $\gs\tooo$ as $x\tooo$, so $\gs\qw\to0$.
Since $g''(s)=f''(s)$, we further have, as \xtoo,
\begin{equation}
  g''(\gs)=\gam\gs\qw+O\bigpar{\gs\qww}\sim\gam\gs\qw.
\end{equation}
Further, on the line $\Re s=\gs$, 
\begin{equation}
  g''(\gs+\ii t)=f''(\gs+\ii t)
=\frac{\gam}{\gs+\ii t}+O\bigpar{\gs\qww}
=\frac{\gam}{\gs}+O\bigpar{(1+\abs t)\gs\qww}.
\end{equation}
Consequently, Taylor's formula yields
\begin{equation}
  g(\gs+\ii t)=g(\gs)+O\bigpar{|t|\gs\qw}
-\frac{\gam}{2\gs}t^2+O\bigpar{(|t|^2+|t|^3)\gs\qww}
\end{equation}
and, uniformly for $|t|\le\gs^{0.6}$,
\begin{equation}\label{ika}
  G(\gs+\ii t)=G(\gs)e^{-\gam t^2/2\gs+o(1)}.
\end{equation}
For larger $|t|$, we have a rapid decay, for example by \refL{LC}
which yields, for large $\gs$ and $|t|\ge\gs^{0.6}$, 
recalling that now $\gam'=\gam$,
\begin{equation}\label{magn}
  \begin{split}
  \frac{|G(\gs+\ii t)|}{|G(\gs)|} 
&=
  \frac{|F(\gs+\ii t)|}{|F(\gs)|} 
=\exp\Bigpar{-\gam\Psi(\gs,|t|)+O(1+|t|\gs\qw)}
\\&
\le \exp\Bigpar{-c\min(|t|,|t|^2/ \gs)}.
  \end{split}
\end{equation}
for some $c>0$.
It follows from \eqref{ika} and \eqref{magn} that \eqref{fx} yields,
using \refT{TRE} and the choice of 
$\gs=e^{(\log x-\gk)/\gam}=e^{-\gk/\gam}x^{1/\gam}$, 
\begin{equation*}
  \begin{split}
f_X(x)&=\frac1{2\pi}\intoooo G(\gs+\ii t)\dd t
\sim
\frac{G(\gs)}{2\pi}\intoooo e^{-\gam t^2/2\gs}\dd t
=
\frac{G(\gs)\sqrt{\gs}}{\sqrt{2\pi\gam}}
\\&
=
\frac{x^{-\gs-1}F(\gs){\gs}\qq}{\sqrt{2\pi\gam}}
\sim
\frac{C_1}{\sqrt{2\pi\gam}}\gs^{\gd+1/2}x\qw
 e^{\gam\gs\log\gs+(\gk-\gam)\gs-\gs\log x}
\\&
=
\frac{C_1}{\sqrt{2\pi\gam}}e^{(\gd+1/2)(\log x-\gk)/\gam}x\qw
 e^{-\gam\gs}
=C_2x^{c_1-1}e^{-c_2x^{1/\gam}}.
  \end{split}
\end{equation*}

The result for $f_Y$ follows similarly from \eqref{fy}, or simpler by
$f_Y(y)=e^yf_X(e^y)$. 
\end{proof}

\begin{remark}
  \label{Rinfty'}
The derivative $f_X'(x)$ and higher derivatives $f_X\xn(x)$ can be obtained by
  repeated differentiation of \eqref{fx} under the integral sign,
  which multiplies the integrand by a factor $(-s-1)\dotsm(-s-n)x^{-n}$.
The argument above, including the estimates \eqref{ika} and \eqref{magn},
  applies to this integral as well and shows that, for any $n\ge0$,
  \begin{equation*}
f_X\xn(x)
\sim\gs^nx^{-n}G(\gs)\sqrt{\gs}/\sqrt{2\pi\gam}	
\sim\gs^nx^{-n}f_X(x)
=(c_2/\gam)^nx^{n(1/\gam-1)}f_X(x).
  \end{equation*}
In particular, every derivative of $f_X$ tends to 0 rapidly 
(faster than any power of $x$)
as \xtoo.
\end{remark}

\begin{remark}\label{Rhighersaddle}
  The saddle-point method yields also
more precise asymptotics  including higher-order terms
by refining the estimates around $s=\gs$ in the proof above, 
see \eg{} \citetq{Section VIII.3}{FS}; 
we leave the details to the reader. This yields an asymptotic
expansion in powers of $\gs\qw$, with $\gs$ as in the proof above,
\ie, in powers of $x^{-1/\gam}$. See \refR{RF20} for an example of
such an expansion (there obtained from a known result rather than by
performing the calculations).
\end{remark}

We continue with the case $\rho_+<\infty$, when $F(s)$ has a pole at
$\rho_+$ of order $\nu_F(\rho_+)\ge1$. 
(Recall the notion $\nu_F$ from \eqref{nuf}.)
We denote the coefficients of the singular part of the Laurent
expansion of $F$ at a point $s_0$ by $c_\ell(s_0)$:
\begin{equation}\label{laurent}
  F(s)=\sum_{\ell=1}^{\absnuf{s_0}}c_\ell(s_0)(s-s_0)^{-\ell}+O(1)
\qquad\text{as $s\to s_0$}.
\end{equation}
In particular, $c_{1}(s_0)$ is the residue $\res{s_0} F$.

We have the following standard result by Mellin inversion, see 
\cite{FGD}. 

\begin{theorem}
  \label{TAfinite}
Suppose that $\rho_+<\infty$ and $\gam>0$.
\begin{romenumerate}
  \item
As \xtoo, for some $\eta>0$,
\begin{equation*}
  f_X(x)=x^{-\rho_+-1}\sum_{\ell=0}^{\absnuf{\rho_+}-1}
\frac{(-1)^{\ell+1} c_{\ell+1}(\rho_+)}{\ell!} \log^\ell x
+O\bigpar{x^{-\rho_+-1-\eta}}
\end{equation*}
In particular, with $\nu\=\absnuf{\rho_+}\ge1 $,
\begin{equation*}
  f_X(x)\sim 
\frac{(-1)^{\nu} c_{\nu}(\rho_+) }{(\nu-1)!}
x^{-\rho_+-1}\log^{\nu-1} x
.
\end{equation*}
If\/ $\rho_+$ is a simple pole of $F$, \ie{} $\nu=1$, this can be written
\begin{equation*}
  f_X(x)\sim 
-\res{\rho_+}(F)\, x^{-\rho_+-1} .
\end{equation*}

\item
More precisely,
there is an asymptotic expansion, for any fixed $\gs>0$,
\begin{equation*}
  f_X(x)
=\sum_{0<\rho\le\gs}\sum_{\ell=0}^{\nu_F\xpar{\rho}-1}
\frac{(-1)^{\ell+1} c_{\ell+1}(\rho)}{\ell!} x^{-\rho-1}\log^\ell x
+O\bigpar{x^{-\gs-1}},
\end{equation*}
summing over all poles $\rho$ of $F$ in $(0,\gs]$. (The inner sum
  vanishes unless $\rho$ is a pole, so formally we may sum over all $\rho$.)
\end{romenumerate}
Corresponding asymptotics for $f_Y(y)=e^yf_X(e^y)$ are obtained by
replacing each $x^{-r-1}$ by $e^{-r y}$ and $\log^\ell x$ by $y^\ell$.
\end{theorem}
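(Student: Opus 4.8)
The plan is to read the expansion off from the Mellin inversion formula \eqref{fx} by pushing the line of integration to the right and collecting the residues at the poles it crosses; this is the standard Mellin transfer argument (see \cite{FGD}), and the hypothesis $\gam>0$ is exactly what supplies the decay needed to run it.

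First I would fix a real $\gs'$ that is not a pole of $F$ and a starting abscissa $\gs_0\in(0,\rho_+)$ (so that \eqref{fx} is valid at $\gs_0$ and there are no poles or zeros of $F$ on $[\gs_0,\rho_+)$, by \refT{T1}), and integrate $x^{-s-1}F(s)$ around the rectangle with vertical sides $\Re s=\gs_0$ and $\Re s=\gs'$ and horizontal sides $\Im s=\pm T$. All poles of $F$ are real and, by \eqref{nuf2}, lie in finitely many arithmetic progressions, so only finitely many of them fall in the rectangle. By \refT{TIM} — specifically the estimate \eqref{tim2}, which uses $\gam>0$ — one has $|F(\gs+\ii t)|$ decaying like $|t|^{\gd+\gamma'\gs}e^{-\frac\pi2\gam|t|}$ uniformly for $\gs$ in the bounded interval $[\gs_0,\gs']$, so the two horizontal sides contribute $o(1)$ as $T\to\infty$, and moreover $\int_{-\infty}^\infty|F(\gs'+\ii t)|\dd t<\infty$, giving $\frac1{2\pi\ii}\int_{(\gs')}x^{-s-1}F(s)\dd s=O(x^{-\gs'-1})$. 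Cauchy's theorem together with \eqref{fx} then yields
\begin{equation*}
f_X(x)=-\sum_{0<\rho\le\gs'}\res{\rho}\bigpar{x^{-s-1}F(s)}+O\bigpar{x^{-\gs'-1}} .
\end{equation*}

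Next I would compute each residue by combining the Laurent expansion \eqref{laurent} of $F$ at $\rho$ with $x^{-s-1}=x^{-\rho-1}\summ\frac{(-\log x)^m}{m!}(s-\rho)^m$ and extracting the coefficient of $(s-\rho)^{-1}$:
\begin{equation*}
\res{\rho}\bigpar{x^{-s-1}F(s)}
=x^{-\rho-1}\sum_{\ell=1}^{\absnuf{\rho}}c_\ell(\rho)\frac{(-\log x)^{\ell-1}}{(\ell-1)!}
=x^{-\rho-1}\sum_{\ell=0}^{\absnuf{\rho}-1}\frac{(-1)^{\ell}c_{\ell+1}(\rho)}{\ell!}\log^\ell x .
\end{equation*}
(At a zero or a regular point the singular part of \eqref{laurent} is empty, so the residue is $0$ and the inner sum is empty; only poles contribute.) Inserting this into the previous display gives part (ii) on taking $\gs'=\gs$ — and we may assume $\gs$ is not a pole, enlarging it slightly towards the next pole otherwise, which only improves the error term. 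For part (i) I would instead choose $\gs'$ strictly between $\rho_+$ and the next larger pole of $F$ (letting $\gs'\to\infty$ if there is none), so that the sum reduces to the single term $\rho=\rho_+$ with error $O(x^{-\rho_+-1-\eta})$, $\eta=\gs'-\rho_+>0$; isolating the term with the highest power of $\log x$, namely $\ell=\nu-1$ with $\nu=\absnuf{\rho_+}$, gives the leading asymptotics, and specializing to $\nu=1$, where $c_1(\rho_+)=\res{\rho_+}F$, gives the simple-pole form. Finally $f_Y(y)=e^yf_X(e^y)$ by \refT{TF}, and substituting $x=e^y$ (so $x^{-r-1}\mapsto e^{-(r+1)y}$ and $\log^\ell x\mapsto y^\ell$, the extra factor $e^y$ turning $e^{-(r+1)y}$ into $e^{-ry}$) gives the stated form for $f_Y$.

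The main obstacle is not conceptual but is the clean justification of the contour shift: one must verify that the horizontal sides of the rectangle vanish in the limit $T\to\infty$ and that the shifted vertical integral converges and is genuinely $O(x^{-\gs'-1})$, both of which rest on the exponential decay of $F$ along vertical lines from \refT{TIM}, hence on $\gam>0$. The residue bookkeeping — keeping track of the $(-1)^{\ell+1}$ signs and the degree-$(\nu-1)$ polynomial in $\log x$ — is the other point requiring care, but it is routine.
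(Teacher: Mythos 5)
Your proposal is correct and follows essentially the same argument as the paper's proof: start from the Mellin inversion formula \eqref{fx}, shift the line of integration from $\Re s\in(\rho_-,\rho_+)$ to $\Re s=\gs$ collecting residues of the traversed poles, compute each residue via the Laurent expansion \eqref{laurent} combined with the Taylor expansion of $x^{-s-1}$, and bound the remaining vertical integral by $O(x^{-\gs-1})$ using the decay from \refT{TIM}. The paper phrases the contour shift more tersely and derives (i) simply by taking $\gs=\rho_++\eta$ in (ii); your spelled-out rectangle argument with vanishing horizontal sides is the same justification made explicit.
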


\begin{proof}
  As said above, this is a standard result, and we refer to \cite{FGD}
  for details, but for completeness and
  later use we give the simple proof. 

It suffices to prove (ii), since (i) follows by taking $\gs=\rho_++\eta$.
We may assume that $\gs$ is not a pole of $F$ (otherwise we increase
$\gs$ a little). We start with \eqref{fx}, where we integrate over a
line with $\Re s\in(\rho_-,\rho_+)$. We may, using \refT{TIM}, shift
the line to $\Re s=\gs>\rho_+$ too, but then we have to subtract the
residues of the traversed poles. Thus
\begin{equation}\label{52}
f_X(x)=\frac1{2\pi\ii}\intgs x^{-s-1}F(s)\dd s
-\sum_{0<\rho\le\gs}\res{s=\rho}(x^ {-s-1}F(s) ),
\end{equation}
and the result follows by computing the residues, using
\eqref{laurent} and $x^{-s-1}=x^{-\rho-1}\sum_{\ell=0}^\infty(-\log
x)^\ell(s-\rho)^\ell/\ell!$,
and noting that, by \refT{TIM} again, 
\begin{equation*}
 \lrabs{\intgs x^{-s-1}F(s)\dd s} 
\le 
 x^{-\gs-1}\intgs |F(s)|\dd s
= O\bigpar{ x^{-\gs-1}}.
\qedhere
\end{equation*}
\end{proof}

In \refT{TAfinite}(ii) we have an asymptotic expansion, valid for
fixed $\gs$ as $x\to\infty$. It is natural to ask whether this
asymptotic expansion actually yields a series representation for
$f_X(x)$, \ie, whether we can let $\gs\to\infty$ for fixed $x$ 
(with the error term tending to 0) so that $f_X(x)$ is represented as a
convergent series.
This is possible sometimes, but not always. In fact, the following
theorem shows that this is possible exactly when $\gam'<0$, at least
provided that there is an infinite number of poles $\rho>0$ and that
these are simple.

\begin{theorem}
  \label{TAsum}
Suppose that  $\gam>0$.
\begin{thmenumerate}
  \item\label{TAsum-}
If\/ $\gam'<0$,
then, for all $x>0$,
\begin{equation}\label{tasum}
  f_X(x)
=\sum_{\rho>0}\sum_{\ell=0}^{\nu_F\xpar{\rho}-1}
\frac{(-1)^{\ell+1} c_{\ell+1}(\rho)}{\ell!} x^{-\rho-1}\log^\ell x,
\end{equation}
summing over all poles $\rho>0$ of $F$. 
In particular, if $F$ has only simple poles,
\begin{equation}\label{sumres}
  f_X(x)
=\sum_{\rho>0}
-\res{\rho}(F) x^{-\rho-1}.
\end{equation}
\item\label{TAsum+}
If\/ $\gam'>0$ and there is an infinite number of poles $\rho>0$ of
$F$, all simple, then the sum \eqref{sumres} diverges for all $x>0$.
\item\label{TAsum0}
If\/ $\gam'=0$, then \eqref{tasum} holds for $x>e^\gk$; hence
 \eqref{sumres} holds for $x>e^\gk$ provided all poles are simple.
However, at least
provided that
there is an infinite number of poles $\rho>0$ of
$F$ and all such poles are simple,
 the sum \eqref{sumres}
diverges for $0<x<e^\gk$.
\end{thmenumerate}

Corresponding results for $f_Y(y)$ are obtained by
replacing $x^{-\rho-1}$ by $e^{-\rho y}$
and $\log^\ell x$ by $y^\ell$. The cut-offs in \ref{TAsum0} become $y>\gk$
and $y<\gk$.
\end{theorem}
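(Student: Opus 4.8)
The argument splits into the convergence claims (\ref{TAsum-} and the first half of \ref{TAsum0}) and the divergence claims (\ref{TAsum+} and the second half of \ref{TAsum0}). Fix throughout a representation \eqref{gammaa} of $F$ with associated set $E$ from \refL{LE}.

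\medskip
\emph{Convergence.} The plan is to push the contour in \eqref{fx} out to $+\infty$ through good values of the abscissa. Pick $\gs_N\to\infty$ with $\gs_N\in E$ and $\gs_N$ not a pole of $F$; this is possible because $E$ is unbounded (it fills more than half of every unit interval) while the poles of $F$ are discrete. Shifting the line in \eqref{fx} to $\Re s=\gs_N$ exactly as in the proof of \refT{TAfinite} gives $f_X(x)=S(\gs_N)+I_N$, where $S(\gs_N)$ is the sum of the residue contributions at the poles in $(0,\gs_N]$ — i.e.\ the $\gs=\gs_N$ partial sum of \eqref{tasum} — and
\begin{equation*}
|I_N|\le\frac{x^{-\gs_N-1}}{2\pi}\int_{-\infty}^{\infty}|F(\gs_N+\ii t)|\dd t .
\end{equation*}
By \refL{LC} (valid since $\gs_N\in E$), and since $\gam'\le0$ makes $-\gam'\Psi(\gs_N,|t|)\le|\gam'|\Psi(\gs_N,|t|)\le\tfrac\pi2|\gam'||t|$ while $\gam-\gam'\ge\gam$, the ratio $|F(\gs_N+\ii t)|/|F(\gs_N)|$ is at most $\exp\bigpar{-\tfrac\pi2\gam|t|+O(1+|t|\gs_N\qw)}\le e^{O(1)}e^{-\tfrac\pi4\gam|t|}$ once $\gs_N$ is large, so $\int_{-\infty}^{\infty}|F(\gs_N+\ii t)|\dd t\le C|F(\gs_N)|$ with $C$ independent of $N$. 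Now \refT{TREX} gives $|F(\gs_N)|=\gs_N^\gd e^{\gam'\gs_N\log\gs_N+(\gk-\gam')\gs_N+O(1)}$, whence
\begin{equation*}
|I_N|\le C'\,\gs_N^\gd\exp\bigpar{\gam'\gs_N\log\gs_N+(\gk-\gam'-\log x)\gs_N+O(1)}.
\end{equation*}
If $\gam'<0$ the term $\gam'\gs_N\log\gs_N\to-\infty$ dominates, so $I_N\to0$ for every $x>0$; if $\gam'=0$ the bound reduces to $(\gk-\log x)\gs_N$, which $\to-\infty$ exactly when $x>e^\gk$. In either case $S(\gs_N)=f_X(x)-I_N\to f_X(x)$, and since the positive poles of $F$ increase to $\infty$, $S(\gs_N)$ runs through the partial sums of \eqref{tasum} in order; hence \eqref{tasum} converges to $f_X(x)$, and \eqref{sumres} is the special case of simple poles. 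The $f_Y$ statements follow from $f_Y(y)=e^yf_X(e^y)$.

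\medskip
\emph{Divergence.} Here the plan is to show the general term of \eqref{tasum} (which for simple poles is $-\Res_\rho(F)x^{-\rho-1}$) fails to tend to $0$ along an infinite subsequence of poles. Choose a numerator factor $\gG(as+b)$ with $a<0$ contributing infinitely many poles (one exists, since $F$ has infinitely many positive poles but only finitely many factors), and put $G(s)=F(s)/\gG(as+b)$, a function of the same type with $\gam'_G=\gam'+|a|$ and $\gk_G=\gk+|a|\log|a|$; let $E_G$ be the set of \refL{LE} for $G$. The candidate poles are $\rho_m=(m+b)/|a|$, $m\in\bbZgeo$, and an equidistribution/commensurability argument shows a positive proportion of the $\rho_m$ lie in $E_G$; for such $\rho_m$ no other $\gG$-factor is singular or zero there, so (using the hypothesis that all poles are simple) $\rho_m$ is a genuine simple pole with $\Res_{\rho_m}F=\dfrac{(-1)^m}{m!\,a}G(\rho_m)$. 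Combining Stirling's formula for $1/m!$ with \refT{TREX} for $|G(\rho_m)|$ (legitimate since $\rho_m\in E_G$) and $m=|a|\rho_m+O(1)$, the terms $\gam'_G\rho_m\log\rho_m$ and $(\gk_G-\gam'_G)\rho_m$ combine with $-\log(m!)$ to yield
\begin{equation*}
\bigabs{\Res_{\rho_m}F}=\exp\bigpar{\gam'\rho_m\log\rho_m+(\gk-\gam')\rho_m+O(\log\rho_m)},
\end{equation*}
the same exponential order as $|F|$ on the real axis. Hence the corresponding term of \eqref{sumres} has absolute value $\exp\bigpar{\gam'\rho_m\log\rho_m+(\gk-\gam'-\log x)\rho_m+O(\log\rho_m)}$, which tends to $\infty$ along this subsequence when $\gam'>0$ (for every $x>0$) or when $\gam'=0$ and $x<e^\gk$. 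So the terms do not tend to $0$ and \eqref{sumres} diverges; the $f_Y$ assertion again follows via $f_Y(y)=e^yf_X(e^y)$, with the cut-off $y<\gk$.

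\medskip
The main obstacle is the bookkeeping in the divergence half: verifying that the chosen factor really produces infinitely many genuine simple poles $\rho_m$ with $\rho_m\in E_G$ — this is precisely where the hypothesis ``all poles simple'' enters, to preclude accidental coincidences of singular factors — and checking that the combined Stirling/\refT{TREX} computation for $|\Res_{\rho_m}F|$ produces exactly the exponent $\gam'\rho_m\log\rho_m+(\gk-\gam')\rho_m$. A secondary technical point is the uniformity in $\gs_N$ of the $O(1)$ in the bound for $I_N$; this is obtained by noting that the errors in \refL{LC} and \refT{TREX} come from $O(1)$ quantities bounded on $E$ (the $\log|\sin|$ terms) together with the explicit bound $\Psi(\gs,t)\le\tfrac\pi2 t$.
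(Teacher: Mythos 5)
Your proof is correct and follows essentially the same route as the paper's: for convergence you push the contour through abscissae $\gs_N\in E$ and combine \refL{LC} with \refT{TREX} to kill the shifted integral, and for divergence you estimate the residues at a subsequence of genuine simple poles via Stirling plus \refT{TREX} to show the terms are unbounded. The one place you are more explicit than the paper is the divergence step: where the paper simply asserts that ``a simple modification of the proof of \refT{TREX}'' gives $|\Res_\rho(F)|=\rho^\gd e^{\gam'\rho\log\rho+(\gk-\gam')\rho+O(1)}$, you derive it by factoring out one $\gG(as+b)$ with $a<0$ and applying Stirling to $1/m!$ together with \refT{TREX} to the cofactor $G$; the two versions give the same exponent, and both leave the ``infinitely many good poles'' selection (your equidistribution remark, the paper's parenthetical on commensurability) at the same level of detail.
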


\begin{proof}
\pfitemx{\ref{TAsum-} and \ref{TAsum0} (convergence)}
  We use again \eqref{52}, and have to show that the integral tends to
  0 as $\gs\to\infty$ for every fixed $x>0$.
We use \refL{LC}, and note that 
$0\le\Psi(\gs,t)\le \frac\pi2 t$ for $t\ge0$, and thus, because $\gam'\le0$,
for $\gs>0$ with $\gs\in E$,
\begin{equation*}
  \begin{split}
  \frac{|F(\gs+\ii t)|}{|F(\gs)|} 
&\le\exp\Bigpar{-\frac{\pi}2(\gam-\gam')|t|-\gam'\frac\pi2|t|+O(1+|t|\gs\qw)}
\\&
=\exp\Bigpar{-\frac{\pi}2\gam|t|+O(1+|t|\gs\qw)}.	
  \end{split}
\end{equation*}
If $\gs\in E$ is large enough we thus have 
$|F(\gs+\ii t)| = O\bigpar{e^{-\gam|t|}|F(\gs)|}$ for all real $t$,
and hence, because $\gam>0$,
\begin{equation*}
\intgs\bigabs{x^{-s-1}F(s)}\,|\dd s| 
= O\bigpar{x^{-\gs-1}|F(\gs)|}\intoooo e^{-\gam|t|}\dd t
= O\bigpar{x^{-\gs-1}|F(\gs)|}.
\end{equation*}
If $\gam'<0$, then this is by \refT{TREX} $o(1)$ as $\gs\to\infty$ 
for any fixed $x>0$, which shows \ref{TAsum-}.

If $\gam'=0$, then \refT{TREX} yields, for a fixed $x>0$, 
$x^{-\gs}F(\gs)=O(\gs^\gd e^{(\gk-\log x)\gs})$, which is $o(1)$ for
$x>e^\gk$, showing the positive part of (iii).

\pfitemx{\ref{TAsum+} and \ref{TAsum0} (divergence)}
  Let $\rho>0$ be a pole of $F$ that is not too close to a zero or
  another pole, 
meaning that the distance to every zero or other pole is at least some
  small constant $\xi>0$. (This is true for all poles if all $a_j,a_k'$
  are commensurable and $\xi$ is small enough; in general it is true
  for a large fraction of the poles, and certainly an infinite number
  of them.) A simple modification of the proof of of \refT{TREX} then
  yields the same estimate as there for the residue at $\rho$:
\begin{equation*}
|\res\rho(F)|
= \rho^\gd e^{\gam' \rho \log \rho + (\gk-\gamma')\rho+O(1)}.
\end{equation*}
and thus, for every fixed $x$,
\begin{equation*}
|x^{-\rho-1}\res\rho(F)|
= \rho^\gd e^{\gam' \rho \log \rho + (\gk-\gamma'-\log x)\rho+O(1)}.
\end{equation*}
Letting $\rho\to\infty$, we see that the terms of \eqref{sumres}
are unbounded if
$\gam'>0$ or $\gam'=0$ and $\gk>\log x$; hence the sum diverges.
\end{proof}

\begin{remark}\label{RTAsumdiv}
To show divergence in \ref{TAsum+} and \ref{TAsum0}, we assumed 
for simplicity that $F$ has only simple poles on the positive axis; we
conjecture that, more generally, \eqref{tasum} diverges also without
this restriction. 

To show divergence we also assumed that $F$ has an infinite number of
positive poles; this is, on the contrary, obviously necessary for
divergence, since otherwise the sums \eqref{tasum} and \eqref{sumres}
are finite. However, if $F$ has only a finite number of positive poles,
then the sum in \eqref{tasum} or \eqref{sumres} is not integrable, since it is 
$\sim c x^{-\rho-1}\log^\ell x$ as $x\to0$, where $\rho>0$ is the
largest pole of $F$ and $c\neq0$, $\ell=\nu_F(\rho)-1$; hence the sum
cannot equal $f_X(x)$ for all $x>0$. \refE{ETAsum} yields an example
where the sum does not equal $f_X(x)$ for any $x>0$ (although the
difference tends to 0 rapidly as $\xtoo$  by \refT{TAfinite}).

In this connection, note that if $\gam>\gam'$, then 
there is an infinite number of poles in $(0,\infty)$
by \refP{PN}.
\end{remark}

We now consider $x\to0$ and $y\to-\infty$. We obtain the following by
the same methods as above 
(now moving the line of integration towards $-\infty$), or more simply
by applying the results above to $X\qw$ and $-Y$; this replaces $F(s)$ by
$F(-s)$ and the Laurent coefficients $c_\ell(s_0)$ by $(-1)^\ell c_\ell(-s_0)$.

\begin{theorem}
  \label{TAinfty0}
Suppose that $\rho_-=-\infty$ and $\gam>0$. Then
\begin{align*}
  f_X(x)&\sim C_3 x^{-c_1-1}e^{-c_3 x^{-1/\gam}},
&& x\to0,
\\
f_Y(y)&\sim \frac{C_1}{\sqrt{2\pi\gam}} e^{-c_1(y-\gk)-\gam e^{-(y-\gk)/\gam}},
&& y\tomoo,
\intertext{where}
  c_1&\=(\gd+1/2)/\gam,\\
  c_3&\=\gam e^{\gk/\gam},\\
  C_3&\=\frac{C_1}{\sqrt{2\pi\gam}}e^{c_1\gk}.
\end{align*}
\end{theorem}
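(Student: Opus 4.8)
The plan is to deduce the statement from \refT{TAinfty} by passing to the reciprocal variable. Put $\tX\=X\qw$ and $\tY\=\log\tX=-Y$. By \refR{Rclosure}, $\tX$ has moments \ogt, and explicitly $\E\tX^s=\E X^{-s}=F(-s)$, so its governing function is $\tF(s)\=F(-s)$, which is again of the form \eqref{gammaF} (with $d$ replaced by $-d$ and each $a_j,a_k'$ by its negative). By \refR{R+-}, the parameters of $\tX$ are $\gam$, $-\gam'$, $\gd$, $-\gk$, $C_1$; moreover the poles of $\tF$ in $(0,\infty)$ are precisely $-\rho$ where $\rho$ ranges over the poles of $F$ in $(-\infty,0)$, so from $\rho_-(X)=-\infty$ we get $\rho_+(\tX)=\infty$. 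Since $\gam>0$ is unchanged, \refT{TF} provides densities for $\tX$ and $\tY$, and \refT{TAinfty} is applicable to them.

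First I would record the elementary change-of-variables identities. For $x>0$, $\P(\tX\le x)=\P(X\ge1/x)$, whence $f_X(x)=x\qww f_\tX(1/x)$; similarly $\P(\tY\le y)=\P(Y\ge-y)$ gives $f_Y(y)=f_\tY(-y)$. (Both are consistent with $f_Y(y)=e^yf_X(e^y)$ and $f_\tY(y)=e^yf_\tX(e^y)$.)

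Next I would substitute the parameters of $\tX$ into \refT{TAinfty}. Writing $\tilde c_1,\tilde c_2,\tilde C_2$ for the constants of that theorem for $\tX$: since $\gd$ is unchanged, $\tilde c_1=(\gd+1/2)/\gam=c_1$; since the $\gk$-parameter of $\tX$ is $-\gk$, $\tilde c_2=\gam e^{-(-\gk)/\gam}=\gam e^{\gk/\gam}=c_3$ and $\tilde C_2=\frac{C_1}{\sqrt{2\pi\gam}}e^{-\tilde c_1(-\gk)}=\frac{C_1}{\sqrt{2\pi\gam}}e^{c_1\gk}=C_3$. Hence $f_\tX(x)\sim C_3\,x^{c_1-1}e^{-c_3x^{1/\gam}}$ as $x\tooo$, and feeding this into $f_X(x)=x\qww f_\tX(1/x)$, so that $x\qww(1/x)^{c_1-1}=x^{-c_1-1}$ and $(1/x)^{1/\gam}=x^{-1/\gam}$, gives $f_X(x)\sim C_3\,x^{-c_1-1}e^{-c_3x^{-1/\gam}}$ as $x\to0$. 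Likewise the $f_\tY$-asymptotics of \refT{TAinfty}, applied to $\tY$ with $\gk$ replaced by $-\gk$, read $f_\tY(y)\sim\frac{C_1}{\sqrt{2\pi\gam}}e^{c_1(y+\gk)-\gam e^{(y+\gk)/\gam}}$ as $y\tooo$; replacing $y$ by $-y$ via $f_Y(y)=f_\tY(-y)$ produces the claimed asymptotics for $f_Y(y)$ as $y\tomoo$.

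There is no real obstacle here; the work is essentially bookkeeping. The points requiring care are: applying \refR{R+-} so that only $\gam'$ and $\gk$ change sign while $\gam,\gd,C_1$ are preserved (this is exactly what makes the exponential rate $c_3$ and the constant $C_3$ come out as stated); verifying that $\rho_-=-\infty$ genuinely forces $\rho_+=\infty$ for $\tX$, so that \refT{TAinfty} really applies; and keeping track of the extra factor $x\qww$ from the reciprocal substitution when reading off the power of $x$. Alternatively one can reprove \refT{TAinfty} directly, now shifting the contour in \eqref{fx} towards $\Re s\to-\infty$ and taking the saddle point $\gs=-e^{(\gk-\log x)/\gam}<0$ for small $x$; this gives the result with the same computation but duplicates work, so the reduction above is the economical route.
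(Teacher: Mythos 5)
Your proof is correct and follows exactly the route the paper itself indicates: the paper states that Theorem~\ref{TAinfty0} is obtained either by rerunning the saddle-point argument with the contour shifted towards $-\infty$, ``or more simply by applying the results above to $X^{-1}$ and $-Y$,'' which is precisely the reduction you carry out in detail. Your bookkeeping of the parameter changes under $X\mapsto X^{-1}$ via Remark~\ref{R+-} and the change-of-variables identities $f_X(x)=x^{-2}f_{X^{-1}}(1/x)$, $f_Y(y)=f_{-Y}(-y)$ is accurate and yields the stated constants $c_1,c_3,C_3$.
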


\begin{theorem}
  \label{TAfinite0}
Suppose that $\rho_->-\infty$ and $\gam>0$.
\begin{romenumerate}
  \item
As $x\downto0$, for some $\eta>0$,
\begin{equation*}
  f_X(x)=x^{|\rho_-|-1}\sum_{\ell=0}^{\absnuf{\rho_-}-1}
\frac{ c_{\ell+1}(\rho_-)}{\ell!} \log^\ell (1/x)
+O\bigpar{x^{|\rho_-|-1+\eta}}
\end{equation*}
In particular, with $\nu\=\absnuf{\rho_-}\ge1 $,
\begin{equation*}
  f_X(x)\sim 
\frac{ c_{\nu}(\rho_-) }{(\nu-1)!}
x^{|\rho_-|-1}\log^{\nu-1}(1/ x)
.
\end{equation*}
If\/ $\rho_-$ is a simple pole of $F$, \ie{} $\nu=1$, this can be written
\begin{equation*}
  f_X(x)\sim 
\res{\rho_-}(F)\, x^{|\rho_-|-1} .
\end{equation*}

\item
More precisely,
there is an asymptotic expansion, for any fixed $\gs>0$,
\begin{equation*}
  f_X(x)
=\sum_{0>\rho\ge-\gs}\sum_{\ell=0}^{\nu_F\xpar{\rho}-1}
\frac{ c_{\ell+1}(\rho)}{\ell!} x^{|\rho|-1}\log^\ell (1/x)
+O\bigpar{x^{\gs-1}},
\end{equation*}
summing over all poles $\rho$ of $F$ in $[-\gs,0)$. 
\end{romenumerate}
Corresponding asymptotics for $f_Y(y)=e^yf_X(e^y)$ are obtained by
replacing each $x^{r-1}$ by $e^{r y}$ and $\log^\ell(1/ x)$ by $(-y)^\ell$.
\end{theorem}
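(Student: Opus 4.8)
The plan is to reduce everything to \refT{TAfinite} by passing to the reciprocal variable. Put $\tX\=X\qw$ and $\tY\=-Y=\log\tX$, and set $\tF(s)\=F(-s)$. Then $\E\tX^s=\E X^{-s}=F(-s)=\tF(s)$ on the strip $-\rho_+<\Re s<-\rho_-$, so $\tX$ and $\tY$ satisfy \eqref{gamma}--\eqref{gammamgf} with the function $\tF$, which is again of the form \eqref{gammaF} and lies in $\cfo$; by \refR{R+-} it has the same parameter $\gam>0$, while $\rho_+(\tF)=-\rho_-\in(0,\infty)$. Hence the hypotheses of \refT{TAfinite} hold for $\tX$ (whose density $f_{\tX}$ exists by \refT{TF} since $\gam>0$), and that theorem supplies, for any fixed cutoff, an asymptotic expansion of $f_{\tX}(w)$ as $w\to\infty$ in terms of the Laurent coefficients of $\tF$ at its poles in the relevant interval.

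It then remains to translate this back, which is pure bookkeeping. Two ingredients are needed. First, the Laurent data: writing $s=-s_0+u$ in \eqref{laurent} gives $\tF(-s_0+u)=F(s_0-u)=\sum_\ell(-1)^\ell c_\ell(s_0)u^{-\ell}+O(1)$, so the Laurent coefficients of $\tF$ at a pole $\tilde\rho>0$ are $(-1)^\ell c_\ell(\rho)$, where $\rho=-\tilde\rho<0$ is the corresponding pole of $F$, and the poles of $\tF$ in $(0,\infty)$ are exactly these reflections, with matching orders. Second, since $\tX=X\qw$ one has $f_{\tX}(w)=w^{-2}f_X(1/w)$, that is $f_X(x)=x^{-2}f_{\tX}(1/x)$, with $1/x\to\infty$ as $x\downto0$. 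Substituting $w=1/x$ and $\tilde\rho=-\rho$ into \refT{TAfinite}(ii): the factor $(-1)^{\ell+1}$ appearing there cancels the $(-1)^{\ell+1}$ coming from the Laurent-coefficient conversion; the term $w^{-\tilde\rho-1}=x^{|\rho|+1}$ (using $\tilde\rho=-\rho=|\rho|$) multiplied by the Jacobian $x^{-2}$ becomes $x^{|\rho|-1}$; $\log^\ell w=\log^\ell(1/x)$; and the error $x^{-2}w^{-\gs-1}=x^{\gs-1}$. This is exactly part~(ii). Part~(i) is the special case in which $\gs$ is chosen just above $|\rho_-|$, small enough that $\rho_-$ is the only pole of $F$ in $[-\gs,0)$ (possible since the poles form a locally finite set); the displayed leading-order forms then follow by retaining only the $\ell=\nu-1$ term, together with $c_1(\rho_-)=\res{\rho_-}(F)$ when $\nu=1$. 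The statements for $f_Y$ follow from $f_Y(y)=e^yf_X(e^y)$, which sends $x^{|\rho|-1}$ to $e^{|\rho|y}$ and $\log^\ell(1/x)$ to $(-y)^\ell$.

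No step is genuinely hard; all the substance lies in \refT{TAfinite}, and the one thing demanding care is the sign bookkeeping — matching the $(-1)^{\ell+1}$ of \refT{TAfinite} against the parity $(-1)^\ell$ of the reflected Laurent coefficients, and checking that the powers of $x$ combine to $x^{|\rho|-1}$. An alternative I would mention is to rerun the Mellin-inversion argument of \refT{TAfinite} directly on \eqref{fx}: starting from a contour $\Re s=\gs\in(\rho_-,\rho_+)$, shift it leftwards across the poles of $F$ in $[-\gs',0)$, collect the residues of $x^{-s-1}F(s)$ there (which reproduce the asserted terms, now with an overall $+$ sign because the contour is traversed in the opposite sense to \refT{TAfinite}), and bound the remaining integral over $\Re s=-\gs'$ by $O(x^{\gs'-1})$ using \refT{TIM}, exactly as in that proof. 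This avoids the reciprocal substitution at the price of duplicating the earlier computation.
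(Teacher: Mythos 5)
Your proof is correct and follows exactly the route the paper itself indicates after \refT{TAinfty0}: apply \refT{TAfinite} to $X\qw$ and $-Y$, noting that $F(s)$ becomes $F(-s)$ and $c_\ell(s_0)$ becomes $(-1)^\ell c_\ell(-s_0)$. The sign and Jacobian bookkeeping in your reduction ($f_X(x)=x^{-2}f_{\tX}(1/x)$, the $(-1)^{\ell+1}$ cancellation, $x^{-2}\cdot x^{|\rho|+1}=x^{|\rho|-1}$) is carried out accurately, and your mention of the alternative contour-shifting argument matches the paper's other suggested route.
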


\begin{theorem}
  \label{XTAsum}
Suppose that  $\gam>0$.
\begin{thmenumerate}
  \item\label{XTAsum+}
If\/ $\gam'>0$,
then, for all $x>0$,
\begin{equation}\label{xtasum}
  f_X(x)
=\sum_{\rho<0}\sum_{\ell=0}^{\nu_F\xpar{\rho}-1}
\frac{c_{\ell+1}(\rho)}{\ell!} x^{|\rho|-1}\log^\ell(1/ x),
\end{equation}
summing over all poles $\rho<0$ of $F$. 
In particular, if $F$ has only simple poles,
\begin{equation}\label{xsumres}
  f_X(x)
=\sum_{\rho<0}
\res{\rho}(F) x^{|\rho|-1}.
\end{equation}
\item\label{XTAsum-}
If\/ $\gam'<0$ and there is an infinite number of poles $\rho<0$ of
$F$, all simple, then the sum \eqref{sumres} diverges for all $x>0$.
\item\label{XTAsum0}
If\/ $\gam'=0$, then \eqref{xtasum} holds for $0<x<e^\gk$; hence
 \eqref{xsumres} holds for $0<x<e^\gk$ provided all poles are simple.
However, at least
 provided that
there is an infinite number of poles $\rho<0$ of
$F$ and all such poles are simple,
 the sum \eqref{xsumres}
diverges for $x>e^\gk$.
\end{thmenumerate}

Corresponding results for $f_Y(y)$ are obtained by
replacing $x^{|\rho|-1}$ by $e^{|\rho|y}$
and $\log^\ell (1/x)$ by $(-y)^\ell$. 
The cut-offs in \ref{XTAsum0} become $y<\gk$
and $y>\gk$.
\end{theorem}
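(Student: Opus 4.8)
The plan is to reduce the entire statement to \refT{TAsum} by passing from $X$ to $X\qw$ (equivalently, from $Y$ to $-Y$), as indicated in the discussion preceding \refT{TAinfty0}. First I would set up the dictionary. If $\E X^s=F(s)$ for $\rho_-<\Re s<\rho_+$, then $\E(X\qw)^s=F(-s)$, which is again of the form \eqref{gamma}; hence $X\qw$ has moments \ogt, with meromorphic transform $\tF(s)\=F(-s)$. By \refR{R+-} the parameters of $X\qw$ are $\gam$, $-\gam'$, $\gd$, $-\gk$, $C_1$, so $\gam>0$ is preserved and \refT{TF} gives $X\qw$ a density, related to $f_X$ by $f_{X\qw}(z)=z\qww f_X(1/z)$, i.e.\ $f_X(x)=x\qww f_{X\qw}(1/x)$. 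The poles of $\tF$ are precisely the numbers $-\rho$ for $\rho$ a pole of $F$, with the same orders, and from $\tF(s)=F(-s)$ the Laurent coefficients satisfy $c_\ell^{\tF}(-\rho)=(-1)^\ell c_\ell(\rho)$, as noted just before \refT{TAinfty0}.

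With this dictionary, the hypotheses of parts \ref{XTAsum+}, \ref{XTAsum-}, \ref{XTAsum0} on $X$ become exactly the hypotheses of \refT{TAsum}\ref{TAsum-}, \ref{TAsum+}, \ref{TAsum0} on $X\qw$: the conditions $\gam'>0$, $\gam'<0$, $\gam'=0$ on $X$ correspond to $\gam'<0$, $\gam'>0$, $\gam'=0$ on $X\qw$, and ``an infinite number of simple poles $\rho<0$ of $F$'' is the same as ``an infinite number of simple poles $\rho>0$ of $\tF$''. I would then translate the conclusions back. For \ref{XTAsum+}, \refT{TAsum}\ref{TAsum-} expresses $f_{X\qw}(z)$ as the sum over poles $\rho>0$ of $\tF$ of $\frac{(-1)^{\ell+1}c_{\ell+1}^{\tF}(\rho)}{\ell!}\,z^{-\rho-1}\log^\ell z$; writing $\rho=-\rho_0$ with $\rho_0<0$ a pole of $F$, inserting $c_{\ell+1}^{\tF}(-\rho_0)=(-1)^{\ell+1}c_{\ell+1}(\rho_0)$ so that the two sign factors cancel, and then using $f_X(x)=x\qww f_{X\qw}(1/x)$, which turns $z^{-\rho-1}=z^{\rho_0-1}$ into $x\qww x^{1-\rho_0}=x^{|\rho_0|-1}$ and $\log^\ell z$ into $\log^\ell(1/x)$, reproduces \eqref{xtasum}; the case $\ell=0$ is \eqref{xsumres}. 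Part \ref{XTAsum-} is \refT{TAsum}\ref{TAsum+} applied to $X\qw$, which yields divergence of \eqref{xsumres} for every $x>0$. Part \ref{XTAsum0} is \refT{TAsum}\ref{TAsum0} applied to $X\qw$: since the relevant $\gk$ for $X\qw$ is $-\gk$, the series for $f_{X\qw}(z)$ converges for $z>e^{-\gk}$ and diverges for $0<z<e^{-\gk}$, which under $z=1/x$ becomes convergence for $x<e^\gk$ and divergence for $x>e^\gk$, as asserted. The statements for $f_Y$ follow either by applying the same argument to $X\=e^Y$ or directly from the $f_Y$ forms of \refT{TAsum}.

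There is also the direct route hinted at in the text: mimic the proof of \refT{TAsum}, starting from \eqref{fx} and pushing the line $\Re s=\gs$ leftwards past the poles $\rho<0$ of $F$. Each traversed pole contributes the residue $\res{s=\rho}(x^{-s-1}F(s))$, which now enters with a $+$ sign and equals $x^{|\rho|-1}\sum_\ell c_{\ell+1}(\rho)\log^\ell(1/x)/\ell!$, and the shifted integral is controlled by \refL{LC} together with the $s\to-\infty$ estimate of \refT{TREX}; when $\gam'\le 0$ one fails to gain the decay needed to let the line tend to $-\infty$, which is the source of the divergence in \ref{XTAsum-} and \ref{XTAsum0}.

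I do not anticipate a real obstacle, since everything is a mirror image of \refT{TAsum}. The only care needed is the sign- and cutoff-bookkeeping in the reduction: checking that the power emerges as $x^{|\rho|-1}$ (not $x^{-\rho-1}$), that the two factors of $(-1)^{\ell+1}$ cancel, and that the threshold appears as $e^\gk$ with convergence for small $x$ and divergence for large $x$ --- the reverse of the roles it plays in \refT{TAsum}.
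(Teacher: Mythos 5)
Your proof is correct and follows exactly the reduction the paper itself indicates for this mirror-image case: just before Theorem~\ref{TAinfty0} the paper states that these results "are obtained by the same methods as above ... or more simply by applying the results above to $X\qw$ and $-Y$; this replaces $F(s)$ by $F(-s)$ and the Laurent coefficients $c_\ell(s_0)$ by $(-1)^\ell c_\ell(-s_0)$," which is precisely your dictionary. (You also silently correct an apparent typo in part~\ref{XTAsum-} of the statement, which refers to \eqref{sumres} where \eqref{xsumres} is clearly intended.)
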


Theorems \refand{TAsum}{XTAsum} say that (at least if $\gam>0$),
$f_X(x)$ has a series expansion in positive (but not necessarily integer)
powers of $x$ if $\gam'>0$,
and a series expansion in negative (but not necessarily integer)
powers of $x$ if $\gam'<0$, in both cases allowing for terms with
logarithmic factors too; if $\gam'=0$ one expansion holds for $0<x<e^\gk$
and the other for $x>e^\gk$.

\begin{remark}
  Suppose that all $a_j,a_k'$ are commensurable; then $F(s)$ may as in
  \refS{Spoles} (see the proof of \refL{L+}) be rewritten with all
  $a_j,a_k'=\pm r$, for some real $r>0$.
The poles of $F$ in $(-\infty,0)$ then form one or several arithmetic
  series \set{s_j-n/r} with gap $1/r$, possibly apart from a finite
  number of other poles. If further all poles are simple, then the
  residue at such a pole $s_l-n/r$ is of the form 
$(C/r)(-D)^n (n!)\qw\prod_{j\neq l} \gG(n+c_j)/\prod_k(\gG(n+c_k')$,
and the contribution to \eqref{xsumres}
from this series of poles is a (generalized)
hypergeometric series with argument $-D x^{1/r}$, times a constant and
  a power of $x$. Consequently, if further $\gam,\gam'>0$, then
the density function may be expressed using one or several
  hypergeometric functions. 
Typical examples are given in Theorems \refand{Tarea}{TMdensity}.
\end{remark}

As a corollary, we get results on continuity and differentiability at 0.

\begin{theorem}\label{T0}
  Suppose that $\gam>0$. 
  \begin{thmenumerate}
\item
The density $f_X$ is continuous at $0$, and thus everywhere on $\bbR$,
if and only if $\rho_-<-1$.	
\item
The density $f_X$ has a finite jump at $0$
if and only if $\rho_-=-1$ and this is a simple pole of $F$. 
In this case $f_X(0+)=\res{\rho_-}(F)$.
\item
The density $f_X$ is infinitely differentiable on $\bbR$
if and only if $\rho_-=-\infty$.	
  \end{thmenumerate}
\end{theorem}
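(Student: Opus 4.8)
The plan is to derive all three parts directly from the near-$0$ asymptotics of $f_X$ recorded in Theorems~\ref{TAinfty0} and~\ref{TAfinite0}, together with the $C^\infty$ smoothness of $f_X$ on $\ooo$ from \refT{TF} (available since $\gam>0$). Throughout I read ``the density $f_X$'' as the version that is continuous on $\ooo$, extended by $f_X(x)=0$ for $x\le0$ (legitimate since $X>0$ \as); thus ``$f_X$ continuous at $0$'' means $f_X(0+)=0$, and ``$f_X$ has a finite jump at $0$'' means $f_X(0+)$ exists in $(0,\infty)$, the jump then being $f_X(0+)-f_X(0-)=f_X(0+)$.

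For (i) and (ii) I would split on $\rho_-$. If $\rho_-=-\infty$, \refT{TAinfty0} gives $f_X(x)\sim C_3x^{-c_1-1}e^{-c_3x^{-1/\gam}}$ with $c_3=\gam e^{\gk/\gam}>0$, so $f_X(0+)=0$: continuous at $0$ (hence on $\bbR$), no jump. If $-\infty<\rho_-<0$, put $\nu\=\nu_F(\rho_-)\ge1$; by \refT{TAfinite0}(i),
\begin{equation*}
  f_X(x)\sim \frac{c_\nu(\rho_-)}{(\nu-1)!}\,x^{|\rho_-|-1}\log^{\nu-1}(1/x),
  \qquad x\downto0,
\end{equation*}
with $c_\nu(\rho_-)\neq0$ (the top Laurent coefficient at the pole $\rho_-$); since $f_X\ge0$ while $x^{|\rho_-|-1}\log^{\nu-1}(1/x)>0$ for $0<x<1$, this coefficient is in fact positive. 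Then: $|\rho_-|>1$ (\ie\ $\rho_-<-1$) gives $f_X(0+)=0$, so $f_X$ is continuous at $0$; $|\rho_-|<1$ (\ie\ $-1<\rho_-<0$) gives $f_X(x)\to+\infty$; $|\rho_-|=1$ (\ie\ $\rho_-=-1$) gives $f_X(x)\to+\infty$ if $\nu\ge2$ and $f_X(x)\to\res{\rho_-}(F)\in(0,\infty)$ if $\nu=1$. Comparing with $f_X(0-)=0$, this is precisely (i), and the unique finite-jump situation is $\rho_-=-1$ a simple pole, with $f_X(0+)=\res{\rho_-}(F)$, which is (ii).

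For (iii): if $\rho_-=-\infty$, the argument behind \refR{Rinfty'}, applied to $X\qw$ (which has $\rho_+=\infty$ and the same $\gam>0$) and transferred via $f_X(x)=x\qww f_{X\qw}(1/x)$, shows $f_X\xn(x)\to0$ as $x\downto0$ for every $n\ge0$. Combined with $f_X\equiv0$ on $(-\infty,0)$, a routine induction on $n$ — at each step using that a function continuous at $0$, differentiable off $0$, whose derivative has a limit $L$ at $0$ is differentiable at $0$ with derivative $L$ (mean value theorem) — yields $f_X\in C^\infty(\bbR)$, with all derivatives vanishing at $0$. Conversely, if $\rho_->-\infty$ and $f_X$ were $C^\infty$ at $0$, then since $f_X\equiv0$ on $(-\infty,0)$ all derivatives $f_X\xn(0)$ would vanish, so Taylor's theorem would force $f_X(x)=o(x^N)$ as $x\downto0$ for every $N$, contradicting $f_X(x)\sim \frac{c_\nu(\rho_-)}{(\nu-1)!}x^{|\rho_-|-1}\log^{\nu-1}(1/x)$ with $c_\nu(\rho_-)\neq0$ once $N>|\rho_-|-1$. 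Hence $f_X$ is $C^\infty$ on $\bbR$ iff $\rho_-=-\infty$, which is (iii).

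The bulk is bookkeeping over the ranges $\rho_-<-1$, $\rho_-=-1$, $-1<\rho_-<0$, $\rho_-=-\infty$; the only step with real content is the converse half of (iii), where smoothness of $f_X$ at $0$ is excluded by playing the infinite order of vanishing forced by $f_X\equiv0$ on the negative axis against the non-vanishing leading term of the Mellin asymptotic expansion at a finite pole $\rho_-$.
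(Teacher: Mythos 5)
Your proof is correct and takes essentially the same route as the paper's: parts (i) and (ii) are read off from the near-$0$ asymptotics of Theorems~\ref{TAinfty0} and~\ref{TAfinite0}, and part (iii) combines the Taylor-expansion contradiction for finite $\rho_-$ with the rapid decay of all derivatives (via Remark~\ref{Rinfty'} transferred by $x\mapsto 1/x$) for $\rho_-=-\infty$. You merely fill in details that the paper leaves to the reader — the explicit case split on $|\rho_-|$ versus $1$ and $\nu$, the positivity of the leading Laurent coefficient from $f_X\ge0$, and the mean-value-theorem induction at $0$ — so the argument is the same, just spelled out.
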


\begin{proof}
Note that $f_X$ is infinitely differentiable on $(0,\infty)$ by
\refT{TF}, as well as, trivially, on $(-\infty,0)$ where it vanishes.

 Parts (i) and (ii) follow immediately from Theorems
  \refand{TAinfty0}{TAfinite0}.

If $f_X$ is infinitely differentiable at 0, then every derivative
$f_X\xn(0)=0$ because $f_X$ vanishes on $(-\infty,0)$. Hence a Taylor
expansion shows that $f_X(x)=O(x^N)$ as $x\to0$ for every integer $N$.
If $\rho_-$ were finite, this would contradict \refT{TAfinite0}; hence
$\rho_-=-\infty$. 

Conversely, if $\rho_-=-\infty$, then \refT{TAinfty0} shows that
$f_X(x)$ tends to 0 rapidly as $x\downto0$. Moreover, by
\refR{Rinfty'} and the usual change of variables $x\mapsto1/x$,
the same holds for each derivative $f_X\xn(x)$. It follows, by
induction, that each derivative $f_X\xn(x)$ exists also at $x=0$
with $f_X\xn(0)=0$.
Hence $f_X$ is infinitely differentiable.
\end{proof}
\begin{remark}
More generally, $f_X$ has $n$ continuous derivatives (at 0) if and only
if $\rho_-<-n-1$; we omit the details.
\end{remark}

\begin{remark}\label{Rgam=0}
  We have in this section assumed $\gam>0$ in order to have good
  estimates of $F(s)$ as $|\Im s|\to\infty$ in the proofs.
It seems likely that the results can be extended to the case $\gam=0$
  too, under suitable conditions, but we have not pursued this beyond
  noting that the results above hold also for the examples in
  \refS{Sex} with $\gam=0$.

 For example, the uniform distribution in \refE{EU} has
  $\rho_+=\infty$, $\rho_-=-1$, $\gam=\gam'=0$, $\gk=0$ and a single,
  simple pole at $-1$; the series in \eqref{sumres} is thus 0 and the
  series in \eqref{xsumres} is 1, so \eqref{sumres} holds for  $x>e^\gk=1$
and \eqref{xsumres} holds for $x<e^\gk=1$. The asymptotic result in
  \refT{TAinfty} is not directly applicable, since the exponent
  $1/\gam=\infty$, but it can be interpreted as $f_X(x)=0$ for large $x$, which
  is correct.

The same holds, \emph{mutatis mutandis}, for the Pareto distribution
in \refE{EPareto}, where 
now there is a single pole at $\ga>0$ and the density vanishes on $(0,1)$.

Similarly, for the Beta distribution $\B(\ga,\gb)$ in \refE{Ebeta}, the
series in \eqref{xsumres} is
\begin{equation*}
  \sumn
  \frac{(-1)^n}{n!}\frac{\gG(\ga+\gb)}{\gG(\ga)\gG(\gb-n)}x^{n+\ga-1}
=
 \frac{\gG(\ga+\gb)}{\gG(\ga)\gG(\gb)}x^{\ga-1} 
\sumn \binom{\gb-1}n(-x)^{n}, 
\end{equation*}
which for $x<1=e^\gk$ converges to the density 
$f(x)=\bigpar{\xfrac{\gG(\ga+\gb)}{\gG(\ga)\gG(\gb)}}\cdot
\allowbreak x^{\ga-1}(1-x)^{\gb-1}$; 
for $x>1$ this series diverges unless $\gb$ is an integer (when the
series is finite but does not yield $f(x)=0$ for $x>1$), while
\eqref{sumres} holds trivially.

The $\gG(n)$ distribution in \refE{Egammamgf} is an example with a
multiple pole. There is a single pole at 1, with $c_n(1)=(-1)^n$ and
$c_\ell(1)=0$, $\ell\neq n$. Hence the sum in \eqref{tasum} is,
rewritten for $f_Y$ as stated in \refT{TAsum}, 
$(1/(n-1)!) y^{n-1} e^{-y}$, which is the correct density for $y>\gk=0$.
\end{remark}

We give some examples of applying the theorems above to the
distributions in \refS{Sex}. This is mainly as an illustration of the
theorems; we cannot expect to obtain any new results for these
classical distributions. Other applications of the theorems are given
in Theorems \ref{Tarea}, \ref{Tareaoo}, \ref{TMdensity}, \ref{TMoo},
\ref{Turn}, \ref{Turn2}, \ref{TDurn}, \ref{TDurn2}.

\begin{example}
  For the exponential distribution in \refE{Eexp}, \refT{TAinfty}
  yields $f(x)\sim e^{-x}$ as \xtoo{} (this is actually an identity
  for all $x>0$) and \refT{XTAsum} yields, since the poles are at
  $-n-1$, with $n=0,1,\dots$, $f(x)=\sumn(-1)^n x^n/n!$,
$x>0$, again a trival result.
\end{example}

\begin{example}
Similarly, for the Gumbel distribution in \refE{EGumbel}, 
$f(y)= e^{-y-e^{-y}}$ 
and the asymptotic formula in \refT{TAinfty0} is actually an
equality for all real $y$.
\end{example}

\begin{example}
  Consider the stable distribution in \refE{Estable} with $0<\ga<1$.
Since $\gam>0>\gam'$, we can apply \refT{TAsum}\ref{TAsum-}.
By \eqref{stab}, $F(s)\=\E S_\ga^s$ has simple poles at $s=n\ga$,
$n=1,2,\dots$, 
and, using \eqref{Asin},
\begin{equation}\label{stabres}
  \begin{split}
	\res{n\ga}(F)
&=\frac{-\ga\res{1-n}(\gG)}{\gG(1-n\ga)}
=\frac{(-1)^{n}\ga}{(n-1)!\,\gG(1-n\ga)}
\\&
=(-1)^n\frac{\ga\gG(n\ga)\sin(\pi n\ga)}{(n-1)!\,\pi}
=(-1)^n\frac{\gG(n\ga+1)\sin(\pi n\ga)}{\pi n!}.
  \end{split}
\end{equation}
(This includes the case when $n\ga$ is an integer, in which case
$n\ga$ is not a pole because of cancellation; \eqref{stabres}
then correctly yields $\res{n\ga}(F)=0$.)
We thus obtain
by \eqref{sumres}
\begin{equation}\label{fstab}
  f_{S_\ga}(x)
=\sumni (-1)^{n+1}\frac{\gG(n\ga+1)\sin(\pi n\ga)}{\pi n!} x^{-n\ga-1}.
\end{equation}
This is the well-known formula for the stable density, see
\citetq{XVII.(6.8)}{FellerII} (with $\gam=-\ga$ for the positive case
studied here).

In particular, as \xtoo, \eqref{fstab} or  \refT{TAfinite} 
(with $\rho_+=\ga$)
yields
\begin{equation*}
f_{S_\ga}(x)\sim -\res{\ga}(F)x^{-\ga-1} =
\frac{\ga}{\gG(1-\ga)}x^{-\ga-1},
\qquad x\to\infty.
\end{equation*}
As $x\to0$, \refT{TAinfty0} yields rapid convergence to 0:
\begin{equation}\label{fstaboo}
f_{S_\ga}(x)\sim 
C_3 x^{-(2-\ga)/(2-2\ga)} e^{-c_3 x^{-\ga/(1-\ga)}},
\qquad x\downto0,
  \end{equation}
with $c_3=(1-\ga)\ga^{\ga/(1-\ga)}$ and
$C_3=(2\pi(1-\ga))\qqw\ga^{1/(2-2\ga)}$. 
\end{example}

\begin{example}\label{EMLd}
  Consider the Mittag-Leffler distribution in \refE{EML} with $0<\ga<1$.
Since $\gam,\gam'>0$, we can apply \refT{XTAsum}\ref{XTAsum+}.
By \eqref{momML}, $F(s)\=\E M_\ga^s$ has simple poles at $s=-n$, $n=1,2,\dots$,
and, using \eqref{Asin},
\cf{} \eqref{stabres},
\begin{equation*}
  \begin{split}
	\res{-n}(F)
&=\frac{\res{1-n}(\gG)}{\gG(1-n\ga)}
=\frac{(-1)^{n-1}}{(n-1)!\,\gG(1-n\ga)}
=(-1)^{n-1}\frac{\gG(n\ga)\sin(\pi n\ga)}{(n-1)!\,\pi}.
  \end{split}
\end{equation*}
(Again, this includes the case when $n\ga$ is an integer, in which case
$-n$ is not a pole but the formula correctly yields 0.)
We thus obtain
by \eqref{xsumres}
\begin{equation}\label{mlstab}
  \begin{split}
  f_{M_\ga}(x)
&=\sumni
(-1)^{n-1}\frac{\gG(n\ga)\sin(\pi n\ga)}{(n-1)!\,\pi}
x^{n-1}
\\&
=\summ
(-1)^{m}\frac{\gG(m\ga+\ga)\sin(\pi \ga(m+1))}{m!\,\pi}
x^{m}.	
  \end{split}
\end{equation}
(This is also easily obtained from the stable density
\eqref{fstab} since $M_\ga= S_\ga^{-\ga}$ and thus 
$f_{M_\ga}(x)=\ga\qw x^{-1/\ga-1}f_{S_\ga}(x^{-1/\ga})$.)
In particular, in accordance with \refT{T0},
\begin{equation*}
f_{M_\ga}(0+)=\frac{\gG(\ga)\sin(\pi\ga)}{\pi}=\frac1{\gG(1-\ga)}.  
\end{equation*}
As $x\to\infty$, \refT{TAinfty} yields
\begin{equation*}
  f_{M_\ga}(x)\sim C_2 x^{(2\ga-1)/(2-2\ga)} e^{-c_2x^{1/(1-\ga)}},
\qquad x\to\infty,
\end{equation*}
with $c_2=(1-\ga)\ga^{\ga/(1-\ga)}$ and 
$C_2=(2\pi(1-\ga))\qqw\ga^{(2\ga-1)/(2-2\ga)}$. 
(This also follows from \eqref{fstaboo}.) 
\end{example}

\begin{example}\label{ElevyD}
  The \Levy{} area in \refE{ELevy} has the \mgf{} \eqref{levymgf} with
  simple poles at $(n+\frac12)\pi$, $n\in\bbZ$. 
\refT{TAsum} yields, for $y>\gk=0$,
\begin{equation*}
  f(y)=\sumn(-1)^n e^{-(n+\frac12)\pi y}
=\frac{e^{-\pi y/2}}{1+e^{-\pi y}} =\frac1{2\cosh(\pi y/2)}
\end{equation*}
while 
\refT{XTAsum} yields, for $y<0$,
\begin{equation*}
  f(y)=\sumni(-1)^{n-1} e^{(n-\frac12)\pi y}
=\frac{e^{\pi y/2}}{1+e^{\pi y}} =\frac1{2\cosh(\pi y/2)}.
\end{equation*}
The two sums thus sum to the same analytic expression;
hence $A$ has the density $\xfrac1{2\cosh(\pi y/2)}$ for $-\infty<
y<\infty$.
(For a more elegant proof of this, see \eg{} 
\citetq{p. 91}{Protter}.) 
\end{example}

\section{Brownian supremum process area}\label{Sarea}

We consider the integral $\cA=\cA(1)$ of the Brownian supremum process
defined in \eqref{a1}.

\begin{remark}\label{Rlocal}
Let $L(t)$ denote the local time of $B(t)$ at 0.
It is well-known that the processes $S(t)$ and $L(t)$, have the same
distribution \cite[Chapter VI.2]{RY}:
\begin{equation*}
\bigset{S(t)}_{t\ge0} \eqd \bigset{L(t)}_{t\ge0}.
\end{equation*}
Consequently, $\cA(\taux)\eqd\intotau L(t)\dd t$, 
so we obtain the same results for this integral.
\end{remark}

Let $\psi$  denote the Laplace transform of $\cA$:
\begin{equation}
  \label{psi}
\psi(s)\=\E e^ {-s\cA}.
\end{equation}
\citet{SJ202} proved the
following formula for the Laplace transform of a variation of
$\psi$, 
or in other words, a \emph{double} Laplace transform of
$\cA$:
For all $\ga,\gl>0$,
\begin{equation}\label{t1}
\intoo \psi\bigpar{\ga t\qqc} e^{-\gl t} \dd t 
= \intoo\Bigpar{1+\frac{3 \ga t}{\sqrt{8 \gl}}}^{-2/3} e^{-\gl t} \dd t. 
\end{equation}

\citet{SJ202} used \eqref{t1} to compute the integer moments
$\E\cA^n$, $n\in\bbN$; \refT{T2} extends their formula to all real and
complex moments.

\begin{proof} [Proof of \refT{T2}]
Consider for convenience $X\=\frac{\sqrt8}{3}\cA$. Taking
$\ga=\sqrt8/3$ in \eqref{t1}, we find, for $\gl>0$,
\begin{equation}\label{t2a}
\intoo \E e^{-t\qqc X} e^{-\gl t} \dd t 
= \intoo\Bigpar{1+\frac{ t}{\sqrt{ \gl}}}^{-2/3} e^{-\gl t} \dd t. 
\end{equation}
Denote the common value of the integrals in \eqref{t2a} by $G(\gl)$.

Let $-1<s<0$, and integrate $\gl^sG(\gl)$. From the \lhs{} in
\eqref{t2a} we obtain, using Fubini's theorem a couple of times, 
the standard Gamma integral \eqref{Agamma1}, and the change of
variables $t\qqc=u$, 
\begin{equation}\label{t2l}
  \begin{split}
\intoo G(\gl)\gl^s\dd\gl
&=
\intoo\intoo\E e^{-t\qqc X} e^{-\gl t} \gl^s\dd t \dd\gl	
\\&
=
\gG(s+1)\E\intoo e^{-t\qqc X} t^{-s-1}\dd t 
\\&
=
\gG(s+1)\E\frac23\intoo e^{-u X} u^{-2s/3-1}\dd u 
\\&
=
\frac23\gG(s+1)\gG(-2s/3)\E X^{2s/3}. 
  \end{split}
\end{equation}

Similarly, from the \rhs\ of \eqref{t2a}, 
using the changes of variables $t=\gl\qq x$ 
and $\gl=u\qqqb$,
and the standard Gamma and Beta
integrals \eqref{Agamma1} and \eqref{Abeta2}, 
still assuming $-1<s<0$,
\begin{equation}\label{t2r}
  \begin{split}
\intoo G(\gl)\gl^s\dd\gl
&= \intoo\intoo\Bigpar{1+\frac{ t}{\sqrt{ \gl}}}^{-2/3} e^{-\gl t}
\gl^s \dd t \dd\gl \\
&= \intoo\intoo\lrpar{1+x}^{-2/3} e^{-\gl\qqc x}
\gl^{s+\xfrac12} \dd x \dd\gl \\
&= \frac23 \intoo\intoo\lrpar{1+x}^{-2/3} e^{-u x}
u^{2s/3} \dd x \dd u \\
&= \frac23 \Gamma(2s/3+1)\intoo\lrpar{1+x}^{-2/3}  x^{-2s/3-1} \dd x  \\
&= \frac23 \Gamma(2s/3+1)\frac{\Gamma(-2s/3)\Gamma(2s/3+2/3)}{\gG(2/3)}.
  \end{split}
\end{equation}

Setting the \rhs{s} of \eqref{t2l} and \eqref{t2r} equal, we find
after some cancellations,
\begin{equation*}
\E X^{2s/3}  
=  \frac{\Gamma(2s/3+1)\Gamma(2s/3+2/3)}{\gG(2/3)\gG(s+1)},
\end{equation*}
for $-1<s<0$, and thus, replacing $s$ by $3s/2$,
\begin{equation}\label{t2e}
\E X^{s}  
=  \frac{\Gamma(s+1)\Gamma(s+2/3)}{\gG(2/3)\gG(3s/2+1)},
\end{equation}
for $-2/3 < s <0$. 

We use \refT{T1} to extend the domain of validity of
\eqref{t2e}. Denote, as usual, the \rhs\ of \eqref{t2e} by $F(s)$, and
note that $s=-2/3$ is \emph{not} a pole of $F(s)$; it is a removable
singularity since the poles in the numerator and denominator at $-2/3$
cancel. The first pole of $G(s)$ on the negative real axis is $s=-1$.
This can also be seen by the functional equation $\gG(z+1)=z\gG(z)$, 
which enables us to rewrite \eqref{t2e} as 
\begin{equation}\label{t2f}
\Bigparfrac{\sqrt8}3^s \E\cA^{s}  
=
\E X^{s}  
=  \frac{\Gamma(s+1)\Gamma(s+5/3)}{\gG(5/3)\gG(3s/2+2)},
\end{equation}
where the right hand side clearly has a pole at $-1$ but not in $(-1,\infty)$.
Hence 
$\rho_-=-1$ and $\rho_+=\infty$, so \eqref{t2e} and
\eqref{t2f} hold for $\Re s>-1$ by \refT{T1}, while $\E\cA^s=\infty$
for $s\le-1$.

Next, the triplication and duplication formulas \eqref{Atriple} and
\eqref{Adouble} yield
\begin{multline*}
  \gG(s+1)\gG(s+4/3)\gG(s+5/3)
=2\pi 3^{-3s-3+1/2}\gG(3s+3)\\
=\pi\qq 2^{3s+3} 3^{-3s-5/2}\gG(3s/2+3/2)\gG(3s/2+2),	
\end{multline*}
and thus
\begin{equation*}
\frac{  \gG(s+1)\gG(s+5/3)}{ \gG(3s/2+2)}
=\pi\qq 2^{3} 3^{-5/2}\frac{\gG(3s/2+3/2) }{ \gG(s+4/3)}
\Bigparfrac {2^3}{3^3}^s.
\end{equation*}
The third formula for $\E\cA^s$  follows by substituting this into
\eqref{t2f}, and using $\gG(1/3)\gG(2/3)=\pi/\sin(\pi/3)=2\pi/\sqrt3$
from \eqref{Asin}. (The constant factor can always be found by setting
$s=0$, see \refR{RC}.)

Similarly, the final formula follows by applying 
the duplication formula \eqref{Adouble} to $\gG(s+4/3)$
and the triplication formula \eqref{Atriple} to $\gG(3s/2+3/2)$.
(Alternatively, we may use \eqref{t2e}, applying the duplication
formula to $\gG(s+1)$ and $\gG(s+2/3)$ and the triplication formula to
$\gG(3s/2+1)$.)
\end{proof}

We have $\rho_+=\infty$ and $\rho_-=-1$ for $\cA$. Further,
the parameters in \eqref{gam}--\eqref{CC1} are, from any of the
expressions in \refT{T2}:
$\gam=\gam'=1/2$, $\gd=1/6$, $\gk=-\frac12\log3$,
$C_1=\pi\qqw\gG(1/3)=2\sqrt{\pi/3}/\gG(2/3)$. 
\refT{TRE} thus yields
\begin{equation}
  \label{amom}
\E \cA^s\sim
\frac{\gG(1/3)}{\pi\qq}s^{1/6} e^{\frac12s\log s-(\frac12\log 3+\frac12)s}
= \frac{\gG(1/3)}{\pi\qq}s^{1/6}\Bigparfrac{s}{3e}^{s/2}
\end{equation}
as $s\to\infty$,
found for integer $s$ in \cite{SJ202}.

\begin{proof}[Proof of \refT{Tarea}]
The existence of the density function $f_\cA(x)$ follows from
\refT{TF}. The explicit formulas are obtained from
\refT{XTAsum} as follows.
 We use the last expression in \refT{T2} for $F(s)\=\E\cA^s$, where
  there is no  cancellation of poles. The poles are thus given by, 
for $n\in\bbZgeo$, 
$s/2+1/2=-n$ and
  $s/2+5/6=-n$,  \ie{} $s=-2n-1$ and $s=-2n-5/3$; all poles are simple.
The same formula yields the residues, using \eqref{Ares} and \eqref{Asin},
\begin{align*}
  \Res_{-2n-1}(F)
&=
\frac{\Gamma(1/3)}{2\qqq\pi}\xdot
2\frac{(-1)^n}{n!}\frac{\gG(1/3-n) }{ \gG(1/6-n)}
\xdot\Bigparfrac {2}{3}^{-n-1/2}
\\
&=
(-1)^n\frac{2^{1/6}3\qq\Gamma(1/3)}{\pi}\xdot
\frac{\gG(n+5/6) }{n!\, \gG(n+2/3)}
\xdot\frac{\sin(\pi/6-n\pi)}{\sin(\pi/3-n\pi)}
\xdot\Bigparfrac {3}{2}^{n}
\\
&=
(-1)^n\frac{2^{1/6}\Gamma(1/3)}{\pi}\xdot
\frac{\gG(n+5/6) }{n!\, \gG(n+2/3)}
\xdot\Bigparfrac {3}{2}^{n},
\\
  \Res_{-2n-5/3}(F)
&=
\frac{\Gamma(1/3)}{2\qqq\pi}\xdot
2\frac{(-1)^n}{n!}\frac{\gG(-1/3-n) }{ \gG(-1/6-n)}
\xdot\Bigparfrac {2}{3}^{-n-5/6}
\\
&=
(-1)^n\frac{3\qqq\Gamma(1/3)}{2^{1/6}\pi}\xdot
\frac{\gG(n+7/6) }{n!\, \gG(n+4/3)}
\xdot\Bigparfrac {3}{2}^{n}.
\end{align*}
Consequently, by \refT{XTAsum}, in particular \eqref{xsumres},
\begin{equation*}
  \begin{split}
f_\cA(x)
&=\frac{2^{1/6}\Gamma(1/3)}{\pi}
\sumn (-1)^n
\frac{\gG(n+5/6) }{n!\, \gG(n+2/3)}
\,\Bigparfrac {3}{2}^{n} x^{2n}
\\
&\qquad
{}
+
\frac{3\qqq\Gamma(1/3)}{2^{1/6}\pi}
\sumn(-1)^n
\frac{\gG(n+7/6) }{n!\, \gG(n+4/3)}
\,\Bigparfrac {3}{2}^{n}
x^{2n+2/3}.
  \end{split}
\end{equation*}
By the definition of $\Fii$, 
and simplifying the constants  using \eqref{Adouble} and \eqref{Asin},
this can be written as 
\begin{equation*}
f_\cA(x)=
\frac{2\qq}{\pi\qq}\Fii\lrxpar{\frac56;\frac23;-\frac32 x^2}
+ \frac{2^{-1/6}3\qqq}{\gG(5/6)}\,x^{2/3}
  \Fii\lrxpar{\frac76;\frac43;-\frac32 x^2}.
\end{equation*}
By Kummer's transformation \cite[(13.1.27)]{AS}, this equals
\begin{equation*}
e^{-\frac32 x^2}\lrpar{
\frac{2\qq}{\pi\qq}\Fii\lrxpar{-\frac16;\frac23;\frac32 x^2}
+ \frac{2^{-1/6}3\qqq}{\gG(5/6)}\,x^{2/3}
  \Fii\lrxpar{\frac16;\frac43;\frac32 x^2}
},
\end{equation*}
which can be rewritten as the two last formulas in the theorem
by the definition of $U$ \cite[(13.1.3)]{AS}, 
see also \cite[(13.1.29)]{AS}, 
again using \eqref{Adouble} and \eqref{Asin} to simplify constants.
\end{proof}

\begin{proof}[Proof of \refT{Tareaoo}]
  Immediate by \refT{TAinfty}.
\end{proof}

\begin{remark}
  The hypergeometric function $\Fii(a;b;x)$  and $U(a;b;x)$ 
satisfies Kummer's
  equation 
$x F''+(b-x)F'-aF=0$
\cite[(13.1.1)]{AS}, 
and it follows easily that
$f_\cA$ satisfies the differential equation
  \begin{equation}
xf_\cA''(x)+(3x^2+\tfrac13)f_\cA'(x)+5xf_\cA(x)=0,
\qquad x>0.
  \end{equation}
We guess that it also is possible to derive this equation directly
from \eqref{t1} by manipulations of Laplace transforms, but we have
not pursued this.
\end{remark}

\section{A hashing variable}\label{Shash}

As said in \refS{S:intro}, when studying the maximum displacement in
hashing with linear probing, \citetq{Theorem 5.1}{PeterssonI} found as
a limit a random variable  
$\cM$ with
  the distribution 
  \begin{equation}
	\label{np1}
\P(\cM> x) = \psi(x\qqc)
=\E e^ {-x\qqc\cA}, \qquad x>0,
  \end{equation}
where $\cA$ is the Brownian supremum area studied in \refS{Sarea}.
\refL{LVZ} shows that this type of relation preserves moments \ogt;
hence  $\cM$ has moments \ogt, but
we have postponed the proof until now.

\begin{proof}[Proof of \refL{LVZ}]
For $x\ge0$, 
\begin{equation*}
  \begin{split}
  \P(T^{1/\ga}/Z^{1/\ga}>x)=\P(T>x^\ga Z)
=\E\bigpar{ \P(T>x^\ga Z\mid Z)}
=\E e^{-x^\ga Z}
  \end{split}
\end{equation*}
which shows that 
\eqref{vz1} and \eqref{vz2} are equivalent.

If \eqref{vz1} or \eqref{vz2} holds, and thus both hold, then, for $s>-\ga$,
using \eqref{Exp},
\begin{equation*}
  \E V^s=\E T^{s/\ga}\E Z^{-s/\ga}
=\Gamma(s/\ga+1)\E Z^{-s/\ga}.
\qedhere
\end{equation*}
\end{proof}

\begin{proof}[Proof of \refT{TM}]
  By \eqref{np1} and \refL{LVZ}, with $\ga=3/2$,
  \begin{equation*}
	\E \cM^{s}=\gG(2s/3+1)\E\cA^{-2s/3},
  \end{equation*}
and the result follows from \refT{T2}; for the last formula we also
use \eqref{Adouble}.
\end{proof}

Note also that \refL{LVZ} yields the representation
\begin{equation}\label{mxa}
  \cM\eqd T\qqqb\cA\qqqbw,
\end{equation}
where $T\qqqb$ has a Weibull distribution with parameter $3/2$, \cf{}
\refE{EWeibull}, and is independent of $\cA$.

For $\cM$, the parameters in \eqref{gam}--\eqref{CC1} are, from any of the
expressions in \refT{TM}:
$\gam=1$, $\gam'=1/3$, $\gd=2/3$, $\gk=\frac13\log2$,
$C_1=2^{7/6}3^{-2/3}\gG(1/3)=2^{13/6}3^{-7/6}\pi/\gG(2/3)$. 
Furthermore, the function $F(s)\=\E\cM^s$ (extended to all of the
complex plane)  
has residue $-3/\sqrt{2\pi}$ at $\rho_+=3/2$, and $\sqrt{2/\pi}$
at $\rho_-=-3/2$. (See the proofs below for the other residues.)

\begin{proof}[Proof of \refT{TMdensity}]
As in the proof of \refT{Tarea}, 
the existence of the density function $f_\cM(x)$ follows from
\refT{TF} and the explicit formulas are obtained from
\refT{XTAsum}.
 We use the third expression in \refT{T2} for $F(s)\=\E\cM^s$.
The poles $\rho<0$ all come from the factor $\gG(1+2s/3)$ and are thus
  given by,  for $n\in\bbZgeo$, 
$1+2\rho/3=-n$,  \ie{} $\rho=-3n/2-3/2$. All poles are simple
and we find using \eqref{Ares} the residues
\begin{multline*}
\res{-\tfrac32n-\tfrac32} (F)
\\
= \frac{\Gamma(1/3)}{2\qqq\pi}
\xdot
\frac{\gG(1+n/2)\,\gG(4/3+n/2) }{ \gG(7/6+n/2)}
\xdot\Bigparfrac {3}{2}^{-1/2-n/2}
\xdot\frac32\frac{(-1)^n}{n!}.
\end{multline*}
Consequently, by \refT{XTAsum}, in particular \eqref{xsumres},
\begin{equation*}
  \begin{split}
f_\cM(x)
&= \frac{3\qq\Gamma(1/3)}{2^{5/6}\pi}
\sumn (-1)^n
\frac{\gG(1+n/2)\,\gG(4/3+n/2) }{ \gG(7/6+n/2)\,{n!}}
\Bigparfrac {2}{3}^{n/2}
 x^{3n/2+1/2}.
  \end{split}
\end{equation*}
Splitting the sum into two parts, for $n=2k$ and $n=2k+1$, and using 
$(2k)!=\pi\qqw 2^{2k}\gG(k+1/2)\,k!$
and
$(2k+1)!=\pi\qqw 2^{2k+1}k!\,\gG(k+3/2)$, both instances of \eqref{Adouble},
we obtain, as usual using \eqref{Adouble} and \eqref{Asin},
\begin{equation*}
  \begin{split}
f_\cM(x)
&= \frac{3\qq\Gamma(1/3)}{2^{5/6}\pi\qq}
\sumk
\frac{\gG(4/3+k) }{ \gG(7/6+k)\,\gG(1/2+k)}
\Bigparfrac {1}{6}^{k}
 x^{3k+1/2}
\\
&\qquad{}- \frac{\Gamma(1/3)}{2^{4/3}\pi\qq}
\sumk
\frac{\gG(11/6+k) }{ \gG(5/3+k)\,{k!}}
\Bigparfrac {1}{6}^{k}
 x^{3k+2}
\\
&= \frac{3\qq\Gamma(1/3)}{2^{5/6}\pi\qq}
\frac{\gG(4/3) }{ \gG(7/6)\,\gG(1/2)}
x^{1/2}
\FF22\Bigpar{\frac43,1;\frac76,\frac12;\frac{x^3}6 }
\\
&\qquad{}- \frac{\Gamma(1/3)}{2^{4/3}\pi\qq}
\frac{\gG(11/6) }{ \gG(5/3)}
x^2
\Fii\Bigpar{\frac{11}6;\frac53;
\frac {x^3}{6}}
\\
&= \frac{2\qq}{\pi\qq}
x^{1/2}
\FF22\Bigpar{\frac43,1;\frac76,\frac12;\frac{x^3}6 }
- \frac58
x^2
\Fii\Bigpar{\frac{11}6;\frac53;
\frac {x^3}{6}}.
\qedhere
  \end{split}
\end{equation*}
\end{proof}

\begin{proof}[Proof of \refT{TMoo}]
As remarked above, the residue at $\rho_+=3/2$ is $-3/\sqrt{2\pi}$,
and the next pole is at $5/2$, which yields \eqref{laban} by \refT{TAfinite}.

More precisely, by the last expression in \refT{TM}, 
there are, 
on the positive real axis,
poles when $1/2-s/3=-n$ or $5/6-s/3=-n$ for integer $n\ge0$, \ie,
$s=3n+3/2$ and $s=3n+5/2$.
The residues are, using \eqref{Ares} and \eqref{Asin},
\begin{align*}
  \Res_{3n+3/2}(F)
&=
-3\frac{(-1)^n}{n!}
\frac{\Gamma(1/3)}{2\qqq\pi\qqc}\xdot
\frac{\gG(1+n)\gG(3/2+n)\gG(1/3-n) }{ \gG(1/6-n)}
\xdot6^{n+1/2}
\\
&=
(-1)^{n+1}\frac{2^{1/6}3\,\Gamma(1/3)}{\pi\qqc}\xdot
\frac{\gG(3/2+n)\gG(5/6+n) }{\gG(2/3+n)}
\xdot6^{n},
\\
  \Res_{3n+5/2}(F)
&=
-3\frac{(-1)^n}{n!}
\frac{\Gamma(1/3)}{2\qqq\pi\qqc}\xdot
\frac{\gG(4/3+n)\,\gG(11/6+n)\,\gG(-1/3-n) }{ \gG(-1/6-n)}
\xdot6^{n+5/6}
\\
&=
(-1)^{n+1}\frac{2\qq3^{4/3}\Gamma(1/3)}{\pi\qqc}\xdot
\frac{\gG(11/6+n)\,\gG(7/6+n) }{n!}
\xdot6^{n}.
\end{align*}
By \refT{TAfinite}, there is an asymptotic expansion
$-\sum_{\rho>0}\res\rho(F)x^{-\rho-1}$, which by the definition of the
(generalized) hypergeometric series can be written
as in \eqref{spoke}, yet again  using \eqref{Ares} and \eqref{Asin}.
\end{proof}

\section{Triangular and diagonal \Polya{} urns}\label{Surn}

A generalized \Polya{} urn contains balls of several different
colours. At each time $n\ge1$, one of the balls is drawn at random,
and a set of new balls, depending on the colour of the drawn ball, is
added to the urn. We consider for simplicity only the case of two
colours, say black and white; the replacement rule may then be
described by a matrix $\smatrixx{a&b\\c&d}$, meaning that if the drawn
ball is black [white], it is replaced together with 
$a$ black and $b$ white balls  [$c$ black and $d$ white balls].
It is here natural to let $a,b,c,d$ be non-negative integers, but in
fact, the model can be defined (and the results below hold)
for arbitrary real $a,b,c,d\ge0$,
see \cite{SJ154,SJ169}.
(Further, under certain conditions some of the entries can be negative
too, 
but that case is not interesting here.) 
Different values of the parameters yield a variety
of different limit laws for the numbers $B_n$ and $W_n$ of black and
white balls in the urn 
after $n$ steps, see \eg{}
\cite{F:exact,SJ154,SJ169} and the references given there. We are here
interested in the special case of a \emph{triangular urn}, meaning
that the replacement matrix is triangular, say $b=0$.
We start with $B_0=b_0\ge0$ black and $W_0=w_0\ge0$ white balls, and assume
$w_0>0$ (otherwise, there will never be any white balls).

\subsection{Balanced triangular urns}
Assume that the urn is triangular and \emph{balanced}, meaning that
the total number 
of added balls does not depend on the drawn ball, \ie, $a=c+d$; we
further assume that $a,c,d>0$; thus $a>d>0$ and $c=a-d$.
In this case, it is shown by 
\citet{Puy},
\citetq{Section 7}{F:exact}  and (with a
different proof) \citetq{Theorems  1.3(v) and 1.7}{SJ169}
that $W_n/n^{d/a}\dto W$ for a random variable $W$ with moments \ogt{}
given by  
\begin{equation}\label{urn}
  \E W^s=d^s \frac{\gG((b_0+w_0)/a)}{\gG(w_0/d)}
\xdot\frac{\gG(s+w_0/d)}{\gG(ds/a+(b_0+w_0)/a)},
\qquad \Re s >-\frac{w_0}d.
\end{equation}
In the special case $(b_0,w_0)=(c,d)$, and thus $b_0+w_0=a$, this
simplifies to $d^s \gG(s+1)/\gG(ds/a+1)$, so $W/d$ has a
Mittag-Leffler distribution with parameter $d/a\in(0,1)$, see
\eqref{momML}.

All poles of $F(s)\=\E W^s$ are on the negative real axis, so $\rho_+=\infty$.
In general, \eqref{urn} 
shows that there is a pole at $-w_0/d$, but if $b_0=0$, then this singularity
is removable and the first pole on the negative real axis is
$-w_0/d-1$.
We thus have
$\rho_-=-w_0/d$ when $b_0>0$, but
$\rho_-=-w_0/d-1$ when $b_0=0$.
In fact, if $b_0=0$, so we start with only $w_0$ white balls, the
first drawn ball is necessarily white, and thus urn after the first
draw contains $c$ black and $w_0+d$ white balls. Thus the limit random
variable $W$ is the same for the initial conditions $(0,w_0)$ and
$(c,w_0+d)$, and we may without loss of generality assume that $b_0>0$.

By \eqref{urn}, we have
$\gam=\gam'=1-d/a=c/a$, 
$\gd=w_0/d-(b_0+w_0)/a$, 
$\gk=-\frac da\log\frac da+\log d
= (c\log d+d\log a)/a$, and
$C_1=(a/d)^{(b_0+w_0)/a-1/2}\xfrac{\gG((b_0+w_0)/a)}{\gG(w_0/d)}$.

The function $F(s)$ in \eqref{urn} has simple poles at $s=-w_0/d-n$,
$n=0,1,\dots$, (except that some of these may in fact be
removable singularities) and Theorems \ref{TF} and \ref{XTAsum} 
yield by a straightforward calculation of the residues, using
\eqref{Ares} and \eqref{Asin}, the following:

\begin{theorem}
  \label{Turn}
The limit variable $W$ for a balanced triangular urn
$\smatrixx{a&0\\c&d}$ with $a=c+d$ and $a,c,d,w_0>0$
has a  density function $f_W$ on $\ooo$ given by, for $x>0$,
\begingroup\multlinegap=0pt
  \begin{multline*}
  f_W(x)= \frac{\gG((b_0+w_0)/a)}{\gG(w_0/d)}\sumn    
\frac{(-1)^n}{n!}
\xdot\frac{d^{-n-w_0/d}}{\gG(-dn/a+b_0/a)}\,
x^{n+w_0/d-1}
\\
= \frac{\gG((b_0+w_0)/a)}{\pi d\gG(w_0/d)}\sumn    
\frac{(-1)^n}{n!}
\,\gG\Bigpar{\frac{dn+a-b_0}a}
\sin\frac{\pi(b_0-dn)}{a}
\Bigparfrac xd^{n+w_0/d-1}.	
  \end{multline*}
  \endgroup
\end{theorem}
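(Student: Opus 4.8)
The plan is to read off the density of $W$ directly from the moment formula \eqref{urn} by summing residues, exactly as in the proofs of Theorems \refand{Tarea}{TMdensity}. Write $F(s)\=\E W^s$ for the meromorphic function on the \rhs{} of \eqref{urn}. From \eqref{urn} one reads off $\gam=\gam'=1-d/a=c/a$, which is strictly positive since $a=c+d$ with $c,d>0$; hence \refT{TF} applies and $W$ has a continuous (indeed $C^\infty$) density $f_W$ on $\ooo$. Moreover, since $\gam'>0$, \refT{XTAsum}\ref{XTAsum+} yields the series representation \eqref{xsumres} for $f_W(x)$, valid for all $x>0$, provided all poles of $F$ on $(-\infty,0)$ are simple. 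The only possible poles of $F$ come from the factor $\gG(s+w_0/d)$ in the numerator of \eqref{urn} --- poles of the denominator $\gG$-factor are zeros, not poles, of $F$ --- so they lie at $s=-w_0/d-n$, $n\in\bbZgeo$, and each is at worst simple; it is in fact a removable singularity precisely when $ds/a+(b_0+w_0)/a$ is a non-positive integer there, \ie{} when the denominator $\gG$-factor also has a pole.

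First I would compute the residues. By \eqref{Ares}, $\res{-n}(\gG)=(-1)^n/n!$, so $\res{s=-w_0/d-n}\bigpar{\gG(s+w_0/d)}=(-1)^n/n!$. Since at $s=-w_0/d-n$ one has $ds/a+(b_0+w_0)/a=(b_0-dn)/a$ and all the other factors in \eqref{urn} are analytic there, it follows that
\begin{equation*}
\res{-w_0/d-n}(F)
= d^{-w_0/d-n}\,\frac{\gG((b_0+w_0)/a)}{\gG(w_0/d)}\,
\frac{(-1)^n}{n!\,\gG((b_0-dn)/a)} .
\end{equation*}
When $(b_0-dn)/a$ is a non-positive integer the factor $1/\gG((b_0-dn)/a)$ is $0$, which matches the removable-singularity case; in particular, for $b_0=0$ the $n=0$ term vanishes, consistently with $\rho_-=-w_0/d-1$. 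Substituting this residue into \eqref{xsumres}, with $|\rho|=w_0/d+n$, gives the first displayed formula in \refT{Turn}.

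For the second formula I would use the reflection formula \eqref{Asin}, $\gG(z)\gG(1-z)=\pi/\sin\pi z$, to rewrite $1/\gG((b_0-dn)/a)=\pi\qw\,\gG((dn+a-b_0)/a)\,\sin\bigpar{\pi(b_0-dn)/a}$, and then collect the powers of $d$ via $d^{-w_0/d-n}x^{n+w_0/d-1}=d\qw(x/d)^{n+w_0/d-1}$. This transforms the first series into the second.

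The computation is entirely routine, and there is no genuine obstacle beyond the bookkeeping. The only points that need a word of care are: (a) checking that \refT{XTAsum}\ref{XTAsum+} applies, which amounts only to $\gam'=c/a>0$ together with the simplicity of all negative poles; and (b) observing that the residue formula above handles the removable singularities gracefully by returning $0$ --- both the (possibly infinitely many) cancellations when $(b_0-dn)/a\in\bbZleo$ and the $n=0$ term when $b_0=0$. Reducing to $b_0>0$ without loss of generality, as in the discussion preceding the theorem, is convenient but not strictly needed, since the stated formulas remain valid for $b_0\ge0$.
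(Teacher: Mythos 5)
Your proposal is correct and follows exactly the route the paper takes: read off $\gam=\gam'=c/a>0$ from \eqref{urn}, invoke Theorems \ref{TF} and \ref{XTAsum}\ref{XTAsum+}, sum the residues at $s=-w_0/d-n$ via \eqref{Ares}, and convert to the second form via \eqref{Asin}. The paper condenses all of this into a single sentence, and your residue computation and handling of the removable singularities match it precisely.
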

In fact, \cite{F:exact} even gives a local limit theorem to this
density function.

\begin{remark}
  It follows from \refT{Turn} by comparison with \refE{EMLd}, 
or more simply directly from \eqref{urn} and \eqref{momML},
that in the special case $b_0=0$,
$W/d$ has a Mittag-Leffler($d/a$) distribution conjugated with
$x^{w_0/d}$,
see \refR{Rconj};
similarly, in the special case $b_0=c=a-d$, 
$W/d$ has a Mittag-Leffler($d/a$) distribution conjugated with $x^{(w_0-d)/d}$.
\end{remark}

\refT{Turn} shows immediately that as $x\downto0$, the density
$f_W(x)$ 
satisfies $f_W(x)\sim C' x^{w_0/d-1}$ where
$C'=\gG((b_0+w_0)/a)\allowbreak(\gG(w_0/d)\gG(b_0/a))\qw
d^{-w_0/d}>0$, 
provided $b_0>0$.
For large $x$, \refT{TAinfty} yields:
\begin{theorem}
  \label{Turn2}
As \xtoo,
\begin{equation*}
  f_W(x)\sim C_2 x^{c_1-1}e^{-c_2 x^{a/c}},
\end{equation*}
with
$c_1=(\gd+1/2)a/c$,
$c_2=ca^{-a/c}d\qw$, $C_2=C_1(2\pi c/a)\qqw (da^{d/c})^{-(\gd+1/2)}$,
where $\gd$ and $C_1$ are given above.
\end{theorem}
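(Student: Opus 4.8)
The plan is to deduce \refT{Turn2} as an immediate instance of the general saddle-point result \refT{TAinfty}. First I would check the two hypotheses of that theorem for the limit variable $W$ of the balanced triangular urn: we already observed above (after \eqref{urn}) that all poles of $F(s)\=\E W^s$ lie on the negative real axis, so $\rho_+=\infty$, and that $\gam=c/a>0$ since $a,c>0$. Hence \refT{TAinfty} applies and gives
\begin{equation*}
  f_W(x)\sim C_2\, x^{c_1-1}e^{-c_2 x^{1/\gam}},\qquad \xtoo,
\end{equation*}
with $c_1=(\gd+1/2)/\gam$, $c_2=\gam e^{-\gk/\gam}$ and $C_2=C_1(2\pi\gam)\qqw e^{-c_1\gk}$.

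It then remains to insert the values of the invariants $\gam,\gd,\gk,C_1$ computed for $W$ just after \eqref{urn} and simplify. Since $1/\gam=a/c$, the exponent $x^{1/\gam}$ becomes $x^{a/c}$ and $c_1=(\gd+1/2)a/c$. For $c_2$ one computes $\gk/\gam=(c\log d+d\log a)/c=\log d+(d/c)\log a$, so that $c_2=(c/a)\,d\qw a^{-d/c}=c\,a^{-(c+d)/c}d\qw=c\,a^{-a/c}d\qw$, where the last step uses the balancing relation $a=c+d$. Finally $C_2=C_1(2\pi c/a)\qqw e^{-c_1\gk}=C_1(2\pi c/a)\qqw e^{-(\gd+1/2)(\log d+(d/c)\log a)}=C_1(2\pi c/a)\qqw (d\,a^{d/c})^{-(\gd+1/2)}$, which is the asserted constant. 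This completes the proof, with $\gd$ and $C_1$ left in the form stated above.

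There is no genuine obstacle here: all the analytic work — the saddle-point estimate, the shift of the Mellin contour, the control of $|F(\gs+\ii t)|$ away from the saddle — is already contained in \refT{TAinfty}, which was proved for an arbitrary $F$ of Gamma type with $\rho_+=\infty$ and $\gam>0$. The only point requiring care is the exponent bookkeeping, in particular rewriting $1+d/c=a/c$ via $a=c+d$; and one should note that the pole of $F$ at $-w_0/d$ (present when $b_0>0$, removable when $b_0=0$) plays no role, since \refT{TAinfty} concerns the behaviour as $x\to\infty$ and uses only $\rho_+=\infty$ together with $\gam>0$.
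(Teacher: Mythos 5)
Your proposal is correct and follows the same route as the paper, which proves \refT{Turn2} simply by citing \refT{TAinfty} with the parameters $\gam=c/a$, $\gd$, $\gk$, $C_1$ computed after \eqref{urn}; your verification of the algebraic simplifications (in particular the use of $a=c+d$ to rewrite $a^{-1-d/c}$ as $a^{-a/c}$) fills in exactly the bookkeeping the paper leaves implicit.
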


\begin{remark}
  For non-balanced triangular urns ($a\neq c+d$), limit results are
  given in \cite{SJ169}, but the results are more complicated and we
  do not believe that the limits have moments \ogt.
(See for example \cite[Theorem 1.6]{SJ169}, which gives a complicated
  integral formula for the moments in the case $a>d>0$, $c>0$. In the
  balanced case, it  simplifies to \eqref{urn}, but as far as we know,
  there is no similar simplification in general.
\end{remark}

\begin{remark}
  The case of triangular urns with three or more colours is not yet
  fully explored. 
Limit laws with \mogt{} occur in
  some cases, but presumably not in all.
Some such results are given by \citet{Puy}, see also \citet{F:exact}.
\end{remark}

\subsection{Diagonal urns}\label{SSdiagonal}
In the diagonal case $b=c=0$ (with $a,d,b_0,x_0>0$ to avoid
trivialities), 
there are simple limit
results, see \cite[Theorem 1.4]{SJ169}. We distinguish between three cases.

\begin{xenumerate}
  \item
If $a=d$, the classical \Polya{} urn \cite{EggPol,Polya},
$W_n/n\to W$ where $W/d\simin \B(w_0/a,b_0/a)$. Hence, by \refE{Ebeta},
$W$ has \mogt
\begin{equation}
  \E W^s = a^s 
\frac{\gG((b_0+w_0)/a)}{\gG(w_0/a)}
\xdot
\frac{\gG(s+w_0/a)}{\gG(s+(b_0+w_0)/a)},
\qquad s>-\frac{w_0}{a}.
\end{equation}
Hence, recalling that $a=d$,
\eqref{urn} holds in this case too.
We have
$\rho_+=\infty$, $\rho_-=-w_0/a$, 
$\gam=\gam'=0$,
$\gd=-b_0/a$, 
$\gk=\log a$,
$C_1=\xfrac{\gG((b_0+w_0)/a)}{\gG(w_0/a)}$.

\item
If $a>d$, $W_n/n^{d/a}\dto W\=d U^{-d/a}V$ where $U\simin\gG(b_0/a)$
and $V\simin\gG(w_0/d)$ are independent. Thus, by \eqref{Gamma},
\begin{equation}\label{diag}
  \E W^s= d^s \,\frac{\gG(b_0/a-ds/a)\,\gG(w_0/d+s)}{\gG(b_0/a)\,\gG(w_0/d)},
\qquad -\frac{w_0}d<\Re s< \frac{b_0}d.
\end{equation}
We have 
$\rho_+=b_0/d$, $\rho_-=-w_0/d$,
$\gam=1+d/a$, $\gam'=1-d/a$, $\gd=b_0/a+w_0/d-1$, 
$\gk=-\frac da\log\frac da+\log d$,
$C_1=2\pi(\gG(b_0/a)\gG(w_0/d))\qw(d/a)^{b_0/a-1/2}$. 
Theorems \refand{TF}{XTAsum} apply again and
yield the following:

\begin{theorem}
  \label{TDurn}
The limit variable $W$ for a diagonal urn
$\smatrixx{a&0\\0&d}$ with $a>d>0$ and $b_0,w_0>0$
has a  density function $f_W$ on $\ooo$ given by, for $x>0$,
\begin{equation*}
  \begin{split}
  f_W(x)&= \frac1{d\gG(b_0/a)\gG(w_0/d)}\sumn    
\frac{(-1)^n}{n!}
\,\gG\Bigpar{\frac{dn+b_0+w_0}a}
\Bigparfrac xd^{n+w_0/d-1}.	
  \end{split}
\end{equation*}
\end{theorem}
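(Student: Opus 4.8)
The plan is to apply Theorems~\ref{TF} and~\ref{XTAsum} to the function $F(s)\=\E W^s$ given by \eqref{diag}. Since $\gam=1+d/a>0$, \refT{TF} already guarantees that $W$ has a density $f_W$ on $\ooo$, represented by the Mellin inversion formula \eqref{fx}. Moreover, $a>d>0$ gives $\gam'=1-d/a>0$, so \refT{XTAsum}\ref{XTAsum+} applies and expresses $f_W(x)$, for every $x>0$, as a convergent series over the poles $\rho<0$ of $F$ (with logarithmic factors in case of multiple poles). Thus the whole proof reduces to locating those poles and computing the corresponding residues.

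First I would identify the negative poles. In \eqref{diag} the factor $\gG(b_0/a-ds/a)$ is analytic and nonvanishing for $\Re s\le0$, since there its argument $b_0/a-ds/a$ has positive real part (because $b_0,d,a>0$), while $\gG(w_0/d+s)$ has simple poles exactly at $s=-w_0/d-n$, $n\in\bbZgeo$. Hence these are precisely the poles of $F$ on $(-\infty,0)$, all simple and uncancelled, so the residue form \eqref{xsumres} of \refT{XTAsum} applies verbatim:
\begin{equation*}
  f_W(x)=\sum_{n=0}^\infty \res{-w_0/d-n}(F)\,x^{w_0/d+n-1},\qquad x>0.
\end{equation*}

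Then I would compute the residues. Using $\res{s=-w_0/d-n}\gG(w_0/d+s)=(-1)^n/n!$, which follows from \eqref{Ares} after the substitution $z=w_0/d+s$, together with the identity $b_0/a-d(-w_0/d-n)/a=(dn+b_0+w_0)/a$, one obtains
\begin{equation*}
  \res{-w_0/d-n}(F)=\frac{(-1)^n}{n!}\cdot
\frac{d^{-n-w_0/d}\,\gG\bigpar{(dn+b_0+w_0)/a}}{\gG(b_0/a)\,\gG(w_0/d)}.
\end{equation*}
Substituting this into the series above and rewriting $d^{-n-w_0/d}x^{n+w_0/d-1}=d\qw(x/d)^{n+w_0/d-1}$ produces exactly the claimed formula.

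All the analytic content — convergence of the series and the validity of the term-by-term residue expansion — is supplied by \refT{XTAsum}\ref{XTAsum+} through the single hypothesis $\gam'>0$, so there is no genuine obstacle. The only points needing a little care are confirming that the negative poles of $F$ are simple and uncancelled (so that no logarithmic terms arise and \eqref{xsumres} is directly usable) and the harmless bookkeeping of the constant $d^{-n-w_0/d}$ when matching the displayed form in the statement; both are routine.
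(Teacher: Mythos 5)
Your proof is correct and follows exactly the route the paper intends: apply Theorem~\ref{TF} (since $\gam=1+d/a>0$) for existence and Theorem~\ref{XTAsum}\ref{XTAsum+} (since $\gam'=1-d/a>0$) for the residue series, then compute the simple poles of $\gG(w_0/d+s)$ and their residues. The paper merely says ``Theorems~\ref{TF} and~\ref{XTAsum} apply'' and states the result; your write-up supplies the same argument with the bookkeeping made explicit.
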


Again, the asymptotic $f_W(x)\sim C' x^{w_0/d-1}$ 
as $x\downto0$, for some $C'>0$, is immediate.
For large $x$, we this time use \refT{TAfinite}, since
$\rho_+<\infty$.
The poles of \eqref{diag} on the positive real axis are $(an+b_0)/d$,
$n=0,1,\dots$, and the residues are easily calculated.
This yields a divergent asymptotic expansion, interpreted as in \refR{RF20}.

\begin{theorem}
  \label{TDurn2}
As \xtoo, the density $f_W(x)$ has an  asymptotic expansion
\begin{equation*}
  \begin{split}
  f_W(x)&\sim \frac a{d^2\gG(b_0/a)\gG(w_0/d)}\sumn    
\frac{(-1)^n}{n!}
\,\gG\Bigpar{\frac{an+b_0+w_0}d}
\Bigparfrac xd^{-an/d-b_0/d-1}.	
  \end{split}
\end{equation*}
\end{theorem}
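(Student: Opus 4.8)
The plan is to read off the large-$x$ behaviour of $f_W$ from the Mellin inversion integral by the residue calculus, that is, to apply \refT{TAfinite}(ii) to the meromorphic function $F(s)\=\E W^s$ given in \eqref{diag}. First I would check the hypotheses: from \eqref{diag} (or the parameter list preceding \refT{TDurn}) one has $\gam=1+d/a>0$, so $f_W$ exists and is $C^\infty$ on $\ooo$ by \refT{TF}, and $\rho_+=b_0/d<\infty$, so \refT{TAfinite}, rather than \refT{TAinfty}, is the relevant statement.

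Next I would locate the poles of $F$ on the positive real axis. Since $w_0/d>0$, the factor $\gG(w_0/d+s)$ is analytic and non-zero for $\Re s>-w_0/d$, in particular for all $s>0$, and the denominator $\gG(b_0/a)\gG(w_0/d)$ is a constant; hence every pole of $F$ in $(0,\infty)$ is a \emph{simple} pole coming from $\gG(b_0/a-ds/a)$ and is located at $\rho_n\=(b_0+an)/d$, $n\in\bbZgeo$ (consistently with $\rho_+=\rho_0=b_0/d$). Then I would compute the residue by the substitution $u=b_0/a-ds/a$, so that $u\to-n$ as $s\to\rho_n$ with $\dd u=-(d/a)\dd s$; combined with $\res{u=-n}\gG(u)=(-1)^n/n!$ and $w_0/d+\rho_n=(an+b_0+w_0)/d$ this yields
\begin{equation*}
 \res{\rho_n}(F)=-\frac ad\,\frac{(-1)^n}{n!}\,\frac{d^{\rho_n}\,\gG\Bigpar{\frac{an+b_0+w_0}d}}{\gG(b_0/a)\gG(w_0/d)}.
\end{equation*}

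Finally I would invoke \refT{TAfinite}(ii): for any fixed $\gs>0$ not a pole of $F$, $f_W(x)=-\sum_{0<\rho_n\le\gs}\res{\rho_n}(F)\,x^{-\rho_n-1}+O(x^{-\gs-1})$ as $x\to\infty$. Using $d^{\rho_n}x^{-\rho_n-1}=d\qw(x/d)^{-\rho_n-1}$ and $-\rho_n-1=-an/d-b_0/d-1$, each term $-\res{\rho_n}(F)x^{-\rho_n-1}$ becomes exactly the summand in the asserted expansion, carrying the constant $a/\bigpar{d^2\gG(b_0/a)\gG(w_0/d)}$; letting $\gs\to\infty$ (through values avoiding poles) produces the full series, which is to be read in the sense of \refR{RF20}, namely that truncating after any fixed number of terms leaves an error of the order of the first omitted term — this is precisely what \refT{TAfinite}(ii) provides. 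That the series genuinely diverges, so this is only an asymptotic and not a convergent expansion, follows from \refT{TAsum}\ref{TAsum+}, since here $\gam'=1-d/a>0$ and there are infinitely many simple positive poles.

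I do not expect a real obstacle; the work is bookkeeping. The one point to get right is the chain-rule factor $a/d$ arising from the argument $b_0/a-ds/a$ of the Gamma factor — it is the reason the final constant carries $a/d^2$ rather than $1/d$ — together with keeping the powers of $x/d$ straight and stating the divergent-series interpretation correctly.
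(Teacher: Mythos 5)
Your proposal is correct and follows exactly the route the paper takes: apply \refT{TAfinite}(ii) to $F(s)=\E W^s$ from \eqref{diag}, identify the simple positive poles at $(an+b_0)/d$, and compute the residues. The paper only remarks that "the residues are easily calculated"; your explicit residue computation (with the chain-rule factor $a/d$ coming correctly from \eqref{Ares} with $\alpha=-d/a$) and the consistency check against the stated constant $a/(d^2\gG(b_0/a)\gG(w_0/d))$ and power of $x/d$ are exactly the omitted bookkeeping, and your appeal to \refT{TAsum} for divergence matches the paper's "divergent asymptotic expansion, interpreted as in \refR{RF20}."
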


\item
If $a<d$, we may interchange the two colours and obtain
$n^{-a/d}(nd-W_n)\dto W\=d U V^{-a/d}$, with $U$ and $V$ as above, and
$aW/d$ has the distribution in (ii) with the exchanges $a\leftrightarrow d$
and $b_0\leftrightarrow w_0$.
\end{xenumerate}

\section{Further examples}\label{Sex2}

We give a couple of further examples, or rather counter examples.
\begin{example}\label{Ecounter}
Let $X$ have a distribution that is a mixture of a point mass at 1 and
a uniform distribution on $\oi$, with equal weights; thus
$X=1-V+VU$ where $V\simin\Be(1/2)$ and $U\simin \U(0,1)$ are independent.
Then, for $\Re s>-1$,
\begin{equation}
  \begin{split}
\E X^s 
&=	
\frac12\cdot 1^s+\frac12\cdot\E U^s
=	
\frac12+\frac12\,\frac1{s+1}
=\frac{s+2}{2(s+1)}
=\frac{\gG(s+3)\,\gG(s+1)}{2\,\gG(s+2)^2}.
  \end{split}
\end{equation}
Equivalently,
\begin{equation}
\E X^s 
=\frac{s/2+1}{s+1}
=\frac{\gG(s/2+2)\,\gG(s+1)}{\gG(s/2+1)\,\gG(s+2)}.
\end{equation}
Hence $X$ has moments \ogt.
We have $\rho_+=\infty$, $\rho_-=-1$, $\gam=\gam'=\gd=\gk=0$ and $C_1=1/2$.
Note that $\E X^{\ii t}\to1/2\neq0$ as $t\to\pm\infty$;
\cf{} \refR{R00}.
\end{example}

\begin{example}
  \label{ETAsum}
Consider $X\=T/U$, where $T\sim\Exp(1)$ and $U\sim \U(0,1)$ are
independent. Then, see \refR{Rclosure} and Examples \refand{Eexp}{EU}, $X$
has \mogt
\begin{equation}
  \E X^s = \frac{\gG(s+1)}{1-s}
=\frac{\gG(s+1)\,\gG(1-s)}{\gG(2-s)},
\qquad -1<\Re s<1.
\end{equation}
Consequently, $\rho_+=1$ and $\rho_-=-1$. 
There is an infinite number
of poles on the negative real axis, \viz{} $-1,-2,\dots$,
but the only pole on the positive real axis is 1.
We have $\gam=\gam'=1$, $\gd=-1/2$, $\gk=0$, $C_1=\sqrt{2\pi}$.

It is easy to find the density of $X=T/U$: for $x>0$,
\begin{equation*}
  \P(T/U>x) =\P(T>Ux)
=\intoi\P(T>ux)\dd u
=\intoi e^{-ux}\dd u
=\frac{1-e^{-x}}x
\end{equation*}
and thus $X$ has the density function
\begin{equation}\label{etaf}
  f(x)=-\frac{\dd}{\dd x}\frac{1-e^{-x}}x
=\frac{1-(1+x)e^{-x}}{x^2},
\qquad x>0.
\end{equation}

Since $\gam,\gam'>0$, \refT{XTAsum} applies. The residue at $-n-1$ is
$(-1)^n/(n!\,(n+2)$ and \eqref{xsumres} yields  
\begin{equation*}
  f(x)=\sumn \frac{(-1)^n }{(n+2) n!} x^n
\end{equation*}
which, of course, also follows directly from \eqref{etaf}.

However, in \refT{TAsum}, although the sum in \eqref{sumres} consists
of a single term $x\qww$ 
and thus converges, the sum $x\qww\neq f(x)$ for all $x>0$, as
asserted in \refR{RTAsumdiv}. (But the error is exponentially small,
and the estimates in \refT{TAfinite} apply.)
\end{example}

\section{Further remarks}\label{Sfurther}

\begin{remark}
Suppose that $X$ is a positive random variable with finite moments (of
  all positive orders): $\E X^n<\infty$ for $n\ge0$.
If $X$ has moments \ogt, then $\rho_+=\infty$ and \eqref{gamma} gives,
  in particular,   a formula for all integer moments $\E X^n$ in terms
  of Gamma functions.
However, the converse does not hold; even if \eqref{gamma} holds for
every integer $s\ge0$, it does not necessarily hold for other $s$.
An example is provided by Stieltjes' original example of indeterminacy
  in the moment problem \cite[\S 55]{Stieltjes}: Let, for
  $\gl\in[-1,1]$, 
$X_\gl$ have the density function $a(1+\gl\sin(x\qqqq))\exp(-x\qqqq)$
 with the normalizing constant $a=1/24$. Then, using $\sin(y)=(e^{\ii
  y}-e^{-\ii y})/2\ii$ and \eqref{Agamma1},
\begin{equation*}
  \begin{split}
\E X_\gl^n	
&=a\intoo x^n\bigpar{1+\gl\sin(x\qqqq)}\expe{-x\qqqq}\dd x 
\\&
=4a\intoo y^{4n+3}\bigpar{1+\gl\sin(y)}\expe{-y}\dd y 
=\frac16{\Gamma(4n+4)},
  \end{split}
\end{equation*}
for any integer $n\ge0$ and any $\gl\in[-1,1]$; thus the variables
$X_\gl$ have the same integer moments. For $\gl=0$, the same
calculation applies to non-integer $n$ as well, and shows that 
$\E X_0^s=\frac16\Gamma(4s+4)$, $-1<s<\infty$, so $X_0$ has moments \ogt.
However, this formula cannot hold for any other $\gl$ (and $s$ in an
interval), by the uniqueness \refC{C1}.

Note that $X_0\eqd Z^4$, where $Z$ has the Gamma distribution
$\Gamma(4)$, \cf{} \refE{Egamma}. A similar example is provided by
$N^6$ (or $|N|^{\ga}$ for any real number $\ga>4$) with $N\simin \N(0,1)$, 
see \cite{Berg}; indeed $|N|^\ga\eqd 2^{\ga/2}Z^{\ga/2}$ with
$Z\simin\gG(1/2)$, 
see Examples \refand{Echi2}{Echi},
and $cZ^{\gb}$ with $c>0$ and $Z\simin\gG(\gam)$
is not determined by its (integer) moments for any $\gam>0$ and $\gb>2$, 
see \eg{} \cite[Section 4.10]{Gut},
so it too provides a counter example.
See
also
\cite{Slud}.
\end{remark}

\begin{remark}
  Many of the examples in \refS{Sex} are  infinitely divisible, for
  example the Gamma distribution $\gG(\ga)$, 
$W_\ga$   \cite[p.~26]{Bondis}, 
$\tP_\ga$ and thus $P_\ga$ \cite[p.~26]{Bondis}, 
$L_\ga$ \cite{Pillai}.
We do not know whether there are any interesting
  connections between moments \ogt{} and infinite divisibility.
\end{remark}

\begin{remark}
  It is possible to consider, more generally,  moments of the form
  \eqref{gamma} where $a_j,b_j, \cx_k,\dx_k$ may be complex (and
  appearing in conjugate pairs to make the function real for real $s$). 
We have not pursued this extension and do not know whether there are
  any interesting results or examples for this class.
A trivial example is the following.

Let $X$ have a two-point distribution with $\P(X=x_1)=\P(X=x_2)=1/2$,
 where $0<x_1<x_2<\infty$.
Then $Y\=\log X$ too has a two-point distribution with 
$\P(Y=y_1)=\P(Y=y_2)=1/2$ where $y_j=\log x_j$, $j=1,2$.
Let $d\=\E Y=(y_1+y_2)/2$ and $\gb\=(y_2-y_1)/2\pi$; thus $y_1,y_2=d\pm\pi\gb$.
Then, using \eqref{Asin},
\begin{equation*}
  \E X^s
=\E e^{sY}
=e^{sd}\cosh(\pi\gb s)
=e^{sd}\sin\Bigpar{\frac\pi2+\pi\gb s\ii}
=\frac{\pi e^{sd}}
{\Gamma\bigpar{\frac12+\ii\gb s}\Gamma\bigpar{\frac12-\ii\gb s}} .
\end{equation*}
\end{remark}

\appendix

\section{Some standard formulas}\label{Appa}

For the readers' (and our own)
convenience we here collect some well-known formulas
for the Gamma function, see \eg{} \cite[Chapter 6]{AS}.
Recall that $\Gamma$ is a meromorphic function in the complex plane,
with simple poles at the non-negative integers $0, -1, -2, \dots$ and
no zeros, so $1/\Gamma$ is an entire function.
\begin{align}
  \Gamma(s)&=\intoo t^{s-1} e^{-t} \dd t, \qquad \Re s>0;
\label{Agamma}
\\
\Gamma(z+1)&=z\Gamma(z);
\label{As+1}
\\
\Gamma(2z)&=\pi\qqw 2^{2z-1}\Gamma(z)\Gamma(z+\tfrac12);
\label{Adouble}
\\
\Gamma(3z)&=(2\pi)\qw 3^{3z-1/2}\Gamma(z)\Gamma(z+\tfrac13)\Gamma(z+\tfrac23);
\label{Atriple}
\\
\Gamma(mz)&= (2\pi)^{-(m-1)/2}
m^{mz-1/2}\prod_{j=0}^{m-1}\Gamma(z+\tfrac jm)
\label{Am}
\\
\Gamma(z)\Gamma(1-z)&=\frac{\pi}{\sin(\pi z)};
\label{Asin}
\\
\intoo t^{s-1} e^{-at} \dd t &= a^{-s} \Gamma(s),
\qquad \Re s>0,\,\Re a>0;
\label{Agamma1}
\\
\intoi t^{s-1}(1-t)^{u-1}\dd t &= \frac{\gG(s)\gG(u)}{\gG(s+u)}, 
\qquad \Re s,\,\Re u>0;
\label{Abeta}
\\
\intoo t^{s-1}(1+t)^{-v}\dd t &= \frac{\gG(s)\gG(v-s)}{\gG(v)}, 
\qquad \Re v>\Re s>0;
\label{Abeta2}
\end{align}

Equation \eqref{Agamma1} yields by
Fubini--Tonelli  a relation between the Laplace transform and
negative moments for any positive random variable $X$:
\begin{equation}\label{Alap}
  \intoo t^{s-1} \E e^{-tX}\dd t = 
  \E\intoo t^{s-1} e^{-tX}\dd t = 
\Gamma(s)\E X^{-s},
\qquad s>0.
\end{equation}

The residue 
$\res{-n}(\gG)=(-1)^n/n!$
(an easy consequence of \eqref{As+1}). 
Thus, more generally, for any complex
$a\neq0$ and $b$,
\begin{equation}
  \label{Ares}
\res{z=-(n+b)/a}\bigpar{\gG(az+b)}
=a\qw \frac{(-1)^n}{n!}.
\end{equation}

Stirling's formula says that for all complex $z$ in a sector $\abs{\arg
  z}<\pi-\eps$ avoiding the negative real axis 
  \begin{equation}
	\label{Astir}
\log\gG(z) = (z-\half)\log z - z +\lpi+\Oqwa z,
  \end{equation}
where the logarithm $\log z$ is the principal value with imaginary
part in $(-\pi,\pi)$. (Here, $\eps>0$ is arbitrary, but the implicit
  constant in the $O$ term depends on $\eps$.)
By differentiating \eqref{Astir} twice we find, for $|\arg z|<\pi-\eps$
(\eg, for $\Re z>0$),
\begin{align}
  \frac{\dd}{\dd z}\bigpar{\log\gG(z)}&=\log z+O\bigpar{|z|\qw},\label{Astir'}
\\
  \frac{\dd^2}{\dd z^2}\bigpar{\log\gG(z)}&=\frac1z+O\bigpar{|z|\qww}.
\label{Astir''}
\end{align}
(Note that also the error term may be differentiated since the
functions are analytic in a larger sector and we may use Cauchy's
estimate for the derivative.)

\newcommand\jour{\emph}
\newcommand\book{\emph}
\newcommand\vol{\textbf}
\newcommand\SPA{\jour{Stochastic Process. Appl.} } 

\newcommand\webcite[1]{\hfil
   \penalty0\texttt{\def~{{\tiny$\sim$}}#1}\hfill\hfill}
\newcommand\arxiv[1]{\webcite{arXiv:#1.}}

\end{document}